\newtheorem{theorem}{Theorem}[section]
\newtheorem{lemma}[theorem]{Lemma}
\newtheorem{corollary}[theorem]{Corollary}
\newtheorem{definition}[theorem]{Definition}
\newtheorem{claim}{Claim}
\newtheorem{conjecture}[theorem]{Conjecture}
\DeclareMathOperator{\ex}{ex}
\begin{document}
\baselineskip=0.175in

\begin{center}
{\LARGE  \textbf{Extremal graphs for the odd prism}\footnote{
E-mail addresses: hexc2018@qq.com (X. He),
ytli0921@hnu.edu.cn (Y. Li, corresponding author),
fenglh@163.com (L. Feng). }
}
\vskip 0.5cm
\end{center}

\begin{center}
Xiaocong He,
Yongtao Li$^*$,
Lihua Feng
\end{center}


\vspace{3mm}

\begin{center}
School of Mathematics and Statistics, HNP-LAMA, Central South University \\
 Changsha, Hunan, 410083, PR China  
\end{center}

\vspace{3mm}

\begin{center}
\today
\end{center}

\vspace{5mm}

\noindent
{\bf Abstract}:\ \
The Tur\'an number  $\mathrm{ex}(n,H)$ of a graph $H$ is the maximum number of edges in an $n$-vertex
graph  which does not contain $H$ as a subgraph.
The Tur\'{a}n number of regular polyhedrons was widely studied in a series of works due to Simonovits. In this paper, we shall present the
exact Tur\'{a}n number of the  prism $C_{2k+1}^{\square} $,
which is defined as the Cartesian product of
an odd cycle $C_{2k+1}$ and an edge $ K_2 $.
Applying a deep theorem of Simonovits
and a stability result of Yuan [European J. Combin. 104 (2022)], we shall
determine the exact value of  $\mathrm{ex}(n,C_{2k+1}^{\square})$ for every $k\ge 1$ and sufficiently large $n$, and we also characterize the extremal graphs.
Moreover,
in the case of $k=1$, motivated by a recent result of Xiao, Katona, Xiao and
Zamora [Discrete Appl. Math. 307 (2022)], we will determine the exact value of $\mathrm{ex}(n,C_{3}^{\square} )$ for every $n$ instead of for sufficiently large $n$. 

\vspace{3mm}
\noindent
{\bf Keywords}:\ \ Tur\'an number, Extremal graphs, Cartesian product.

\vspace{1mm} \noindent{\bf AMS classification}: 05C35.

\section{Introduction}
\noindent

In this paper, all graphs considered are undirected, finite and contain neither loops nor multiple edges.
The vertex set of a graph $G$ is denoted by $V(G)$, the edge set of $G$ by $E(G)$, and the number of edges in $G$ by $e(G)$.
 Let $K_n$ be the {\it complete graph} on $n$ vertices, and
  $K_{s,t}$ be the {\it complete bipartite graph} with parts of sizes
  $s$ and $t$.
 We write  $C_n$ and $P_n$ for the {\it cycle} and
 {\it path} on $n$ vertices, respectively.
The  {\it Tur\'{a}n graph} $T_r(n)$
is an $n$-vertex complete $r$-partite graph with each part of size $n_i$ such that
$|n_i-n_j|\le 1$ for all $1\le i,j\le r$, that is, $n_i$ equals
$\lfloor {n}/{r} \rfloor$  or $\lceil {n}/{r}\rceil$.
Sometimes, we  call $T_r(n)$ the {\it balanced complete $r$-partite graph}.
Denote by $G_1\cup G_2$ the vertex-disjoint union of two graphs
$G_1$ and $G_2$. For simplicity, we write $kG$
for the vertex-disjoint union of $k$ copies of $G$.
Denote by $G_1\otimes G_2$ the graph obtained from $G_1\cup G_2$ by joining each vertex of $G_1$
to each vertex of $G_2$.
For a vertex $v\in V(G)$, and a subgraph $H\subseteq G$ (possibly $v\in V(H)$), we use $N_H(v)$ to denote the set of neighbors of $v$ in $V(H)$, and use $d_H(v)$ to denote $|N_H(v)|$. For two sets $A,B\subseteq V(G)$,
we write $G[A,B]$ for the induced bipartite graph of $G$
with edges between $A$ and $B$, and $e(A,B)$ for the number of edges of $G[A,B]$.  We write $G[A]$ for the subgraph induced by $A$.
We denote the minimum degree of $G$ by $\delta(G)$,
the complement of $G$ by $\overline{G}$.
In particular, $\overline{K}_n$ is the empty graph,
which is also called an independent set on $n$ vertices.
 The chromatic number of $G$, denoted by $\chi(G)$,
 which is the minimum integer $s$ such that
 there exists a coloring of the vertex set $V(G)$ with $s$ colors
 and the adjacent vertices have different colors.

\subsection{Tur\'{a}n number of regular polyhedrons}

Let $G$ and $H$ be two simple graphs.
We say that $G$ is {\it $H$-free} if it does not contain
 a subgraph isomorphic to $H$.
For example, every bipartite graph is triangle-free.
The {\it Tur\'an number} of  $H$, denoted by $\ex(n,H)$, is the maximum number of edges in an $n$-vertex $H$-free graph.
Moreover, an $n$-vertex $H$-free graph with maximum number of
edges is called an {\it extremal graph} for $H$.
In general, it is possible that there are many (not only one) extremal graphs for $H$.
For instance, our result in this article
shares this phenomenon.
The problem of determining Tur\'{a}n numbers is one of the cornerstones of graph theory.
Observe that the  balanced complete $r$-partite graph
$T_r(n)$ is $K_{r+1}$-free.
In 1941, Tur\'{a}n \cite{Turan41} proved
$\mathrm{ex}(n,K_{r+1})=e(T_r(n))$.
Moreover, $T_r(n)$  is the unique $K_{r+1}$-free graph attaining the maximum number of edges.
In particular, for $r=2$, it gives
\begin{equation} \label{eq-man}
\mathrm{ex}(n,K_3)=\lfloor {n^2}/{4}\rfloor,
\end{equation}
 which was early proved by Mantel \cite{MAN};
see, e.g., the monograph \cite[p. 294]{Bo} for alternative proofs and extensions,
and the surveys \cite{FS13,Sim13} for more related results.

After Tur\'{a}n \cite{Turan41} determined the extremal graphs of the tetrahedron $K_4$,
he asked the following analogous problem:
Determining the Tur\'{a}n number of a graph consisting of the vertices and edges of a regular polyhedron. For example,
the cube $Q_8$, the octahedron $O_6$,
the dodecahedron $D_{20}$ and the icosahedron $I_{12}$;
see Figure \ref{regular-polyhedrons}.

\begin{figure}[H]
\centering
\includegraphics[scale=0.9]{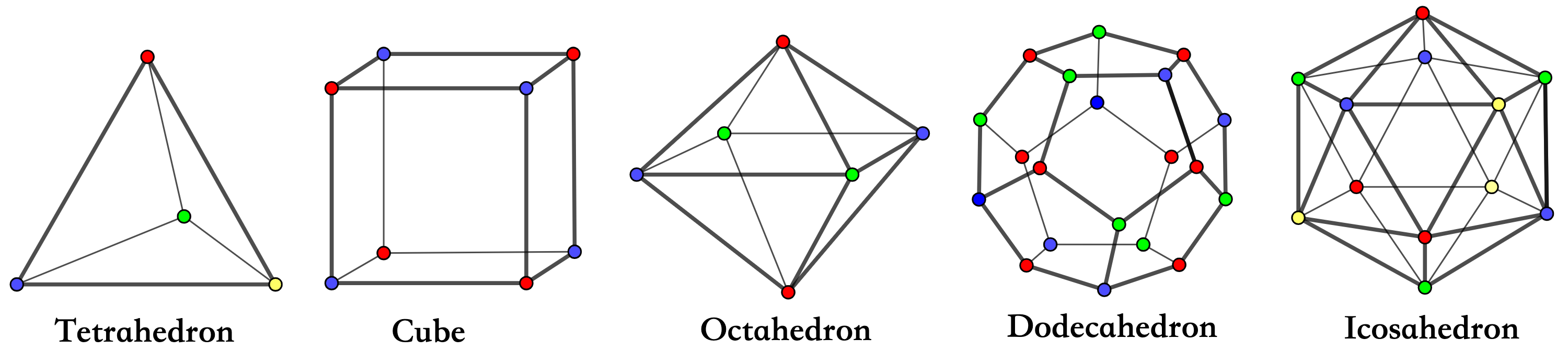}
\caption{The five regular polyhedrons}
\label{regular-polyhedrons}
\end{figure}

The Tur\'{a}n problem involving regular polyhedrons
was widely studied in the literature;
see, e.g., \cite{ES1970,PS2005,Fur2013,JN2017,JMY2021} for the cube $Q_8$,
\cite{ES1971} for the octahedron $O_6$,
\cite{Simonovits1974} for the dodecahedron $D_{20}$,
 and \cite{Sim1974} for the icosahedron $I_{12}$.
 In particular, although Erd\H{o}s and Simonovits \cite{ES1970} showed that $\mathrm{ex}(n,Q_8)\le Cn^{8/5}$ for some
 constant $C>0$,
 the lower bound on the Tur\'{a}n number of $Q_8$ remains open.
Motivated by the above results,
we shall consider the Tur\'{a}n problem of the prism, which can be viewed as
a variant graph of the cube $Q_8$.
Recall that $\chi (H)$ denotes the chromatic number of $H$.
The celebrated Erd\H{o}s--Stone--Simonovits
Theorem \cite{EDR1,EDR2} states that
\begin{align}\label{E-S-S}
\ex(n,H)= \left(1-\frac{1}{\chi(H)-1} \right)\binom{n}{2}+o(n^{2}).
\end{align}
This result determines the asymptotic value of $\ex(n,H)$ for every non-bipartite graph $H$, and it reduces to $\mathrm{ex}(n,H)=o(n^2)$
for every bipartite graph $H$.
At the first glance, the formula (\ref{E-S-S}) gives a satisfactory answer to the
problem of estimating $\mathrm{ex}(n,H)$.
Furthermore, it is an important problem in extremal
combinatorics  to obtain more accurate bound on the error term
$o(n^2)$.
In particular, one of the central problems  is
 to determine the exact Tur\'{a}n number for a graph $H$.

There are many elegant results on finding
the exact value of $\mathrm{ex}(n,H)$
for a specific graph $H$. For example, the earliest result due to Mantel  \cite{MAN} provided
$\ex(n,C_{3})= \lfloor n^2/4\rfloor$.
Erd\H{o}s, F\"{u}redi, Gould and Gunderson \cite{intersecting},
 Chen, Gould, Pfender and Wei \cite{Chen03}
determined the exact Tur\'{a}n number for intersecting triangles
and cliques, respectively.
Liu \cite{LIU} determined the Tur\'{a}n number
for the edge-blow-ups of cycles and a large class of trees,
and Ni, Kang, Shan and Zhu \cite{NKSZ2020} investigated
the edge-blow-ups of keyrings.
 In 2022, Yuan \cite{Yuan2022} studied the Tur\'{a}n number and
 extremal graphs for
the edge-blow-ups of complete graphs and complete bipartite graphs.
Applying the stability theorem of Erd\H{o}s and Simonovits,
 Hou, Qiu and Liu \cite{HQL16,HQL18},
 and Yuan \cite{Yuan2018} independently determined the Tur\'{a}n number for flower graphs.
Moreover, Dzido \cite{Dzi2013},
Dzido and Jastrzebski \cite{DJ2018}, and Yuan \cite{Yuan2021}
determined the Tur\'{a}n number for wheels.
In 2022, Xiao, Katona, Xiao and Zamora \cite{Xiao},
and Yuan \cite{YUAN} characterized the extremal graphs for the power of a path.

\subsection{The prism and Cartesian product}

Apart from the five regular polyhedrons in Figure \ref{regular-polyhedrons},
 there are two well-known
polyhedrons, called the pyramid and prism; see Figure \ref{odd-wheel-prism}. In fact, it is easy to observe that the pyramid is actually a wheel graph, which was studied by Dzido \cite{Dzi2013}, Dzido and Jastrzebski \cite{DJ2018} and Yuan \cite{Yuan2021}.
Differing from the study of the pyramid (wheel),
in this paper, we shall pay attention to the Tur\'{a}n number of the prism,
and characterize the extremal graphs.

\begin{figure}[H]
\centering
\includegraphics[scale=0.7]{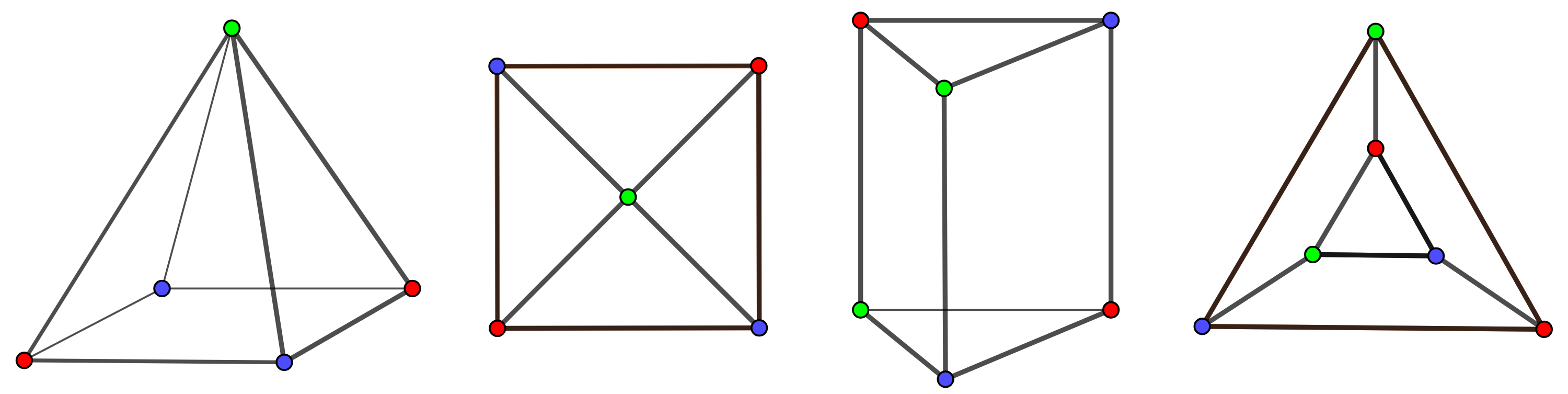}
\caption{The quadrangular pyramid and triangular prism}
\label{odd-wheel-prism}
\end{figure}

One can consider the prism  from the
perspective of Cartesian product.
The {\it Cartesian product} of graphs $G$ and $F$, denoted by $G\square F$, has vertex set  $V(G)\times V(F)$, in which two distinct vertices $(g_1,f_1)$ and $(g_2,f_2)$ are adjacent in $G\square F$ if either $g_1=g_2$ and $f_1f_2\in E(F)$, or
$f_1=f_2$ and $g_1g_2\in E(G)$.
In 2022, Brada\v{c}, Janzer, Sudakov and Tomon \cite{BJST2022}
studied the Tur\'{a}n number of the grid $P_t \square P_t$,
where $P_t$ is the path on $t$ vertices.
More precisely, they proved that for  $t\ge 2$,
there exist positive numbers $C_1$
and $C_2$ depending only on $t$ such that
$ C_1n^{3/2}\le \mathrm{ex}(n,P_t\square P_t) \le C_2n^{3/2}$.
In our paper, we shall consider the Tur\'{a}n number
for the Cartesian product of an odd cycle and an edge.
Recall that $C_{2k+1}$ is the cycle on $2k+1$ vertices.
Observe that the odd prism $C_{2k+1}^{\square} :=
C_{2k+1} \square K_2$
is the graph consisting of
two vertex disjoint cycles of order $2k+1$
and a matching pairing the corresponding vertices of these two cycles.
For example,  the third and fourth graph in Figure \ref{odd-wheel-prism}
are the triangular prism.
As promised, we shall study the Tur\'{a}n number of the prism
$C_{2k+1}^{\square}$.
By Erd\H{o}s--Stone--Simonovits' theorem in (\ref{E-S-S}),
we get an asymptotic value
\[ \ex(n,C_{2k+1}^{\square} )=\frac{n^{2}}{4}+o(n^{2}). \]
Furthermore,
one natural problem is to refine the error term and
 determine the exact Tur\'{a}n number of $C_{2k+1}^{\square}$.
Our result in this paper  solves this problem and
study the exact value of $\ex(n,C_{2k+1}^{\square})$. Moreover, we also characterize the extremal graphs for $C_{2k+1}^{\square} $.

\subsection{Main results}

The first  result in this paper can be stated as below.

\medskip
\begin{theorem}\label{main}
Let $k\ge 1$ be fixed and $n$ be sufficiently large. Then
$$\emph{ex}(n,C_{2k+1}^{\square} )=
\max\limits_{n_a+n_b=n} \left\{n_a(1+n_b)+\frac{1}{2}(j^2-3j):j\in \{0,1,2\},j \equiv n_a (\mathrm{mod}~{3})\right\}.$$
Moreover, all extremal graphs for $C_{2k+1}^{\square}$ are of the form of a complete bipartite graph $K_{n_a,n_b}$ with an extremal graph for $P_4$ added to the part of size $n_a$.
\end{theorem}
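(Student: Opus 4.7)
The plan is to establish matching lower and upper bounds on $\ex(n, C_{2k+1}\square P_2)$, characterising the extremal graphs along the way. For the lower bound, let $G^{\star} = K_{n_a,n_b} + H$ be the proposed extremal graph, where $H$ is an extremal $P_4$-free graph embedded in the part $A$ of size $n_a$; since $P_4$-free (as a subgraph) graphs are disjoint unions of triangles, stars, and isolated vertices, one has $e(G^{\star}) = n_a n_b + \ex(n_a, P_4)$ with $\ex(n_a, P_4) = n_a$ when $n_a \equiv 0 \pmod{3}$ and $n_a - 1$ otherwise, matching the expression $n_a + \tfrac{1}{2}(j^2 - 3j)$ with $j \equiv n_a \pmod{3}$. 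The heart of the lower bound is a combinatorial lemma about the prism: \emph{for every independent set $S \subseteq V(P)$ of $P := C_{2k+1} \square P_2$, the induced subgraph $P[V(P)\setminus S]$ contains a copy of $P_4$}. Granting this, any embedding of $P$ into $G^{\star}$ would force the prism-vertices lying in $B$ to form an independent set (since $G^{\star}[B]$ is empty) and the prism-vertices lying in $A$ to induce a $P_4$-free graph (since $G^{\star}[A] = H$ is $P_4$-free), a contradiction. I would prove the lemma by setting $T_i := \{j : \text{the } j\text{th vertex of the } i\text{th layer is in } S\}$ for $i \in \{0,1\}$, noting that $T_0, T_1$ are disjoint independent sets in $C_{2k+1}$, and case-analysing on $|T_0|, |T_1|$: small values leave long arcs within a single layer already containing $P_4$, while larger values still leave "free" rung indices $i \notin T_0 \cup T_1$ allowing one to glue a short arc on one layer through an available rung edge $a_i b_i$ into a $P_4$.

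For the upper bound, let $G$ be an extremal $C_{2k+1}\square P_2$-free graph on $n$ vertices. Since $\chi(C_{2k+1}\square P_2) = 3$ and $e(G) \ge n^2/4 - O(1)$, Simonovits's classical stability theorem yields a partition $V(G) = A \cup B$ with $|A|, |B| = n/2 \pm o(n)$ such that $e(G[A]) + e(G[B]) = o(n^2)$. A standard extremality argument (if removing a vertex and reinserting it elsewhere could strictly increase the edge count, $G$ would not be extremal) then forces the minimum degree $\delta(G) \ge (1/2 - o(1))n$, which combined with the bipartition forces the bipartite subgraph $G[A, B]$ to be essentially complete. This is where Yuan's stability result is invoked in an appropriate form to pin down the residual structure inside $A$ and $B$.

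The main obstacle is the following embedding dichotomy: if $G[B]$ contains even a single edge $b_1 b_2$ and $G[A]$ contains a copy of $P_4$, then $G$ already contains a copy of $C_{2k+1}\square P_2$. To prove this, I would construct the two $(2k+1)$-cycles of the prism separately using the interior edges on opposite sides: one cycle $C^{(1)}$ built from a single $A$-edge plus $2k$ bipartite edges alternating through $A$ and $B$, so that $|V(C^{(1)}) \cap A| = k+1$ and $|V(C^{(1)}) \cap B| = k$; and the other cycle $C^{(2)}$ built symmetrically from the $B$-edge $b_1 b_2$, giving $|V(C^{(2)}) \cap A| = k$ and $|V(C^{(2)}) \cap B| = k+1$. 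The rungs of the prism must form a perfect matching between the two cycles that respects the cyclic labelling, and each rung can be taken to be a bipartite edge (hence available in the near-complete $G[A,B]$). The delicate task is to select the intermediate vertices of the two cycles and the rung endpoints all distinct; this is handled by a greedy argument that exploits the abundance of bipartite neighbours and uses that only $o(n^2)$ bipartite edges are missing.

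Granting the dichotomy, $G$ is a subgraph of $K_{A,B} + H'$ for some $P_4$-free graph $H'$ located entirely on one side, so $e(G) \le n_a n_b + \ex(n_a, P_4)$; optimising over $n_a + n_b = n$ and substituting $\ex(n_a, P_4)$ as above recovers the stated formula. Tracing equality conditions throughout, namely that the bipartite part must be complete and $H'$ must attain $\ex(n_a, P_4)$, yields the claimed characterisation of extremal graphs.
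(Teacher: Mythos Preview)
Your lower bound is fine and matches the paper: the combinatorial lemma that the complement of any independent set in the prism contains a $P_4$ is exactly the content of the paper's Lemma~\ref{decompositionofpowerpath}, i.e.\ $\mathcal{M}(C_{2k+1}\square P_2)=\{P_4\}$.

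The upper bound, however, has genuine gaps. First, the ``dichotomy'' is misstated: you write ``$P_4$ in $A$ and edge in $B$ $\Rightarrow$ prism'', but the construction you describe uses a single $A$-edge and a single $B$-edge --- it actually proves ``edge in $A$ and edge in $B$ $\Rightarrow$ prism'', which is the paper's Lemma~\ref{no-two-edge}(b). That gives you one edgeless side, say $B$, but you still need a \emph{separate} embedding lemma to force $G[A]$ to be $P_4$-free (namely: a $P_4$ in one side of a large near-complete bipartite graph already creates a prism). You never invoke this direction; your lower-bound lemma is its converse and does not supply it.

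More seriously, Erd\H{o}s--Simonovits stability alone does not deliver the structure you claim. Stability gives a bipartition with only $o(n^2)$ errors in total, so individual vertices may still be exceptional --- missing $\Theta(n)$ cross-edges --- and if the lone edge in $B$ you want to use is incident to such a vertex, your greedy prism-building breaks down; you cannot conclude that $G[B]$ is \emph{entirely} edgeless. The paper avoids this by using, instead of ordinary stability, Simonovits's structure theorem (Theorem~\ref{family-extremal-graphs}) for extremal graphs whose decomposition family contains a linear forest: after deleting a bounded set $D$ of at most $r=O(1)$ vertices one gets an \emph{exact} join $G'=G^1\otimes G^2$ with full symmetry among components. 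Now Lemma~\ref{no-two-edge}(b) applies unconditionally and forces one $G^i$ to be empty. The remaining work is to reintegrate $D$, and here Yuan's stability lemma for $P_4$-free graphs (Lemma~\ref{stablity for non-connecte}) is not a passing remark but the engine of the proof: it guarantees that $G[A_1]$ contains either $\ell K_3$ or $K_{1,\ell}$ with $\ell=\Theta(n)$, and the two resulting cases are what allow the exact edge count and the extremal characterisation to be extracted (equations~(\ref{eqA1})--(\ref{eqA1A2}) in the paper). Your proposal names Yuan's result but gives it no role; without it, the passage from ``approximately bipartite'' to ``exactly $K_{n_a,n_b}$ plus an extremal $P_4$-free graph'' is not established.
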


There are a few kinds of graphs $H$
for which $\mathrm{ex}(n,H)$ are known exactly {\it for all integers $n$},
including cliques, matchings, paths, odd cycles and some other special graphs. To be more specific,
Tur\'{a}n \cite{Turan41} determined the value of $\mathrm{ex}(n,K_{r+1})$
for all $n$,
 Erd\H{o}s and Gallai \cite{EG}
 determined $\mathrm{ex}(n,P_k)$ and
 $\mathrm{ex}(n,M_{2k})$,  where $M_{2k}$
 is the matching on $2k$ vertices.
  F\"{u}redi and Gunderson \cite{Furedi}
 determined $\mathrm{ex}(n,C_{2k+1})$ for all $n$ and $k\ge 2$.
 Bushaw and Kettle \cite{BK2011} determined
 $\mathrm{ex}(n,kP_3)$ for all $n\ge 7k$,
and later, Campos and Lopes \cite{CL2018},
Yuan and Zhang  \cite{YZ2017}, independently determined
the value $\mathrm{ex}(n,kP_3)$ for all integers $n$ and $k$.
Moreover, Bielak and Kieliszek
 \cite{BK2016} determined  $\mathrm{ex}(n,2P_5)$ for all $n$,
which was  also proved by Yuan and Zhang \cite{YZ2021} independently.
 In addition, Lan, Qin and Shi \cite{LQS2019}  determined  $\mathrm{ex}(n,2P_7)$
 for all $n$.

For the triangular prism in Figure \ref{odd-wheel-prism},
that is, the case $k=1$ in Theorem \ref{main},
we will use a different method, and
get rid of the condition that $n$ is sufficiently large.
More precisely, we will determine $\mathrm{ex}(n,C_3^{\square})$
for all $n$, and present all  $C_3^{\square}$-free graphs on
$n$ vertices with
maximum number of edges.
Our results will be presented in the following two theorems,
where we  provide  the Tur\'{a}n number first, and  give
the corresponding extremal graphs later.

\medskip
\begin{theorem}\label{C3}
The maximum number of edges in an $n$-vertex $C_3^{\square}$-free graph $(n\neq5)$ is
\[  \ex(n,C_3^{\square} )=\begin{cases}
\lfloor\frac{n^{2}}{4} \rfloor+ \lfloor\frac{n-1}{2} \rfloor, &\text{if}~n\equiv1,2,3 ~(\bmod~6); \\
\lfloor\frac{n^{2}}{4} \rfloor+ \lceil\frac{n}{2} \rceil, &\text{otherwise}.
\end{cases}
 \]
\end{theorem}

Before showing the extremal graphs in Theorem \ref{C3},
we need to define some graphs, which originally appears in Xiao, Katona, Xiao and Zamora \cite{Xiao}.
Let $K_{i,n-i}$ be the complete bipartite graph on parts $X$ and $Y$
with $|X|=i$ and $|Y|=n-i$, respectively.
On the one hand, if  $3 \mid i$, then embedding $\frac{i}{3}$ vertex-disjoint triangles in the class $X$ and keeping the class $Y$ unchanged, we get a new graph on $n$ vertices, and  denoted by $H^{i}_{n}$. On the other hand,
if $3\centernot\mid i$, then choosing an integer $ j\in [1, i]$ such that $3\mid (i-j)$,  we define $F^{i,j}_{n}$ as the graph obtained from $K_{i,n-i}$
by embedding a star on $j$ vertices, and
$\frac{i-j}{3}$ vertex-disjoint triangles  in the class $X$,
where  the star is disjoint from any triangle.
Clearly, one can observe that both $H_n^i$ and $F_n^{i,j}$
are $C_{2k+1}^{\square}$-free for any $i$ and $j$.
In addition, for small  $n$, we  introduce three
exceptional graphs $G_1,G_2$ and $G_3$; see Figure \ref{678}.

\begin{figure}[H]
\centering
\begin{tikzpicture}[scale=1.2]
\filldraw[thick] (0,0) circle (1.5pt) node[below] {$v_3$}
-- (-1,0.5) circle (1.5pt) node[below] {$v_1$}
 -- (0,1) circle (1.5pt) node[above] {$v_2$}
 -- (2,0.5) circle (1.5pt) node[below] {$v_5$}
 -- (1,1) circle (1.5pt) node[above] {$v_6$}
 -- (1,0) circle (1.5pt) node[below] {$v_4$} -- (0,0) circle (1.5pt) -- (0,1) circle (1.5pt) -- (1,1) circle (1.5pt) -- (0,0) circle (1.5pt) -- (2,0.5) circle (1.5pt) -- (1,0) circle (1.5pt) -- (0,1) circle (1.5pt);

\draw (0.5,-0.6) node{$G_1$};

\filldraw[thick] (3,1) circle (1.5pt) node[above] {$v_5$}
-- (4,0) circle (1.5pt) node[below] {$v_3$}
-- (3,0) circle (1.5pt) node[below] {$v_6$}
-- (3,1) circle (1.5pt) -- (4,1) circle (1.5pt) node[above] {$v_4$}
-- (6,0.5) circle (1.5pt) --
(5,1) circle (1.5pt) node[above] {$v_1$}--
(5,0) circle (1.5pt) node[below] {$v_2$}
-- (4,0) circle (1.5pt) -- (4,1) circle (1.5pt) -- (5,1) circle (1.5pt) -- (4,0) circle (1.5pt) -- (6,0.5) circle (1.5pt)
node[below] {$v$}-- (5,0) circle (1.5pt) -- (4,1) circle (1.5pt);

\filldraw[thick] (3,0) circle (1.5pt) -- (4,1) circle (1.5pt);

\draw (4.5,-0.6) node{$G_2$};

\filldraw[thick] (8,1) circle (1.5pt) node[above] {$v_5$}
-- (9,0) circle (1.5pt) node[below] {$v_3$}
-- (8,0) circle (1.5pt) node[below] {$v_6$}
-- (8,1) circle (1.5pt) -- (9,1) circle (1.5pt) node[above] {$v_4$}
-- (11,0.5) circle (1.5pt) node[below] {$x$}
-- (10,1) circle (1.5pt) node[above] {$v_1$}
 -- (10,0) circle (1.5pt) node[below] {$v_2$}
 -- (9,0) circle (1.5pt) -- (9,1) circle (1.5pt) -- (10,1) circle (1.5pt) -- (9,0) circle (1.5pt) -- (11,0.5) circle (1.5pt) -- (10,0) circle (1.5pt) -- (9,1) circle (1.5pt)
 (8,0) circle (1.5pt) -- (9,1) circle (1.5pt)
  (8,0) circle (1.5pt) -- (7,0.5) circle (1.5pt) node[below] {$y$}
  -- (8,1) circle (1.5pt)
   (9,0) circle (1.5pt) -- (7,0.5) circle (1.5pt) -- (9,1) circle (1.5pt);

\draw (9,-0.6) node{$G_3$};
\end{tikzpicture}
\caption{The graphs $G_1,G_2$ and $G_3$.}
\label{678}
\end{figure}
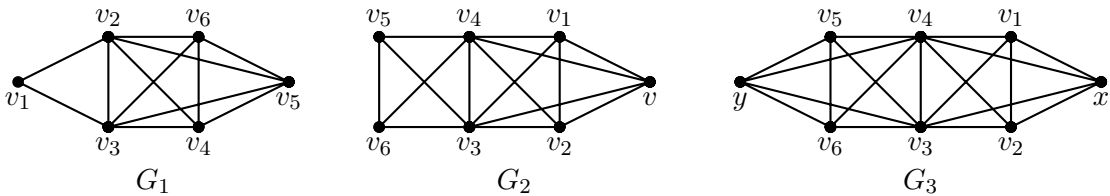

The extremal graphs for $C_3^{ \square}$ are presented
in the following tables.

\begin{theorem}\label{EC3}
For $n\geq 6$, the extremal graphs for $C_3^{\square}$ are the following ones.

\begin{table}[H]
\centering
\begin{tabular}{ccccccccccc}
\toprule
 order $n$ & 6 &  7 & 8  \\
\midrule
 extremal graphs & $G_1,H_6^3$~ &
 ~$G_2,F_7^{4,1}\!,F_7^{4,4}\!,H_7^3$~ &
 ~$G_3,F_8^{4,1}\!,F_8^{4,4}\!,F_8^{5,2}\!,F_8^{5,5}$ \\
\bottomrule
\end{tabular}
 \label{tab-n678}
\end{table}
\vspace{-4mm}
\noindent
Moreover, for $n\ge 9$, we have the following extremal graphs.
\begin{table}[H]
\centering
\begin{tabular}{ccccccc}
\toprule
 \!\!order $n~(\mathrm{mod}~6)$ & 0 & 1 & 2 & 3  & 4 & 5 \\
\midrule
 \!\!extremal graphs &
 $H_n^{{\frac{n}{2}}}$  &
 $F_n^{\lceil \!\frac{n}{2}\!\rceil, j} \!, H_n^{\lfloor \!{\frac{n}{2}}\!\rfloor}$ &
 $F_n^{{\frac{n}{2}}, j}\!,F_n^{{\frac{n}{2}}+1, j}$ &
 $F_n^{\lceil \!\frac{n}{2}\!\rceil, j}\!, H_n^{\lceil \!{\frac{n}{2}}\!\rceil+1}$
  &
 $H_n^{{\frac{n}{2}} +1}$
 & $H_n^{\lceil \!{\frac{n}{2}}\!\rceil}$ \\
\bottomrule
\end{tabular}
 \label{tab-n9}
\end{table}
\vspace{-4mm}
\noindent
where $j$ can be chosen as any possible integer satisfying  $1\le j\leq i$ and $3 \mid (i-j)$.
\end{theorem}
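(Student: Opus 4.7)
The plan is to start with an arbitrary $C_3\square P_2$-free graph $G$ on $n$ vertices achieving the bound in Theorem~\ref{C3} and deduce structural constraints that pin $G$ down to one of the families in the tables. The backbone mirrors the derivation of Theorem~\ref{C3}: find a bipartition $(X,Y)$ achieving the maximum cut, show that the non-crossing edges lie on a single side, show they induce a $P_4$-free subgraph, and then optimize.

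First I would take a bipartition $(X,Y)$ with $|X|=i$, $|Y|=n-i$ that maximizes the number of crossing edges. Since $e(G)\ge\lfloor n^{2}/4\rfloor+\Theta(n)$, the cut has size $i(n-i)$ very close to $\lfloor n^{2}/4\rfloor$, forcing $|i-n/2|=O(1)$, and the maximum-cut property guarantees that every vertex sends at least as many edges across the cut as into its own side. Next I rule out edges on both sides simultaneously: if $x_{1}x_{2}\in E(G[X])$ and $y_{1}y_{2}\in E(G[Y])$, pick $y_{3}\in Y\setminus\{y_{1},y_{2}\}$ and $x_{3}\in X\setminus\{x_{1},x_{2}\}$; the triangles $\{x_{1},x_{2},y_{3}\}$ and $\{y_{1},y_{2},x_{3}\}$ joined by the matching $x_{1}y_{1}$, $x_{2}y_{2}$, $y_{3}x_{3}$ form a triangular prism, contradicting $C_3\square P_2$-freeness. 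After relabelling, all non-crossing edges lie in $G[X]$.

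Next I would show that $G[X]$ contains no $P_{4}$ as a subgraph. If $a_{1}a_{2}a_{3}a_{4}$ were such a path, then for any two distinct $b_{1},b_{2}\in Y$ the triangles $\{a_{1},a_{2},b_{1}\}$ and $\{a_{3},a_{4},b_{2}\}$ together with the edges $a_{1}b_{2}$, $a_{2}a_{3}$, $a_{4}b_{1}$ would form a triangular prism. Since each component of a $P_{4}$-subgraph-free graph is a triangle or a star, one has $e(G[X])\le |X|$ when $3\mid |X|$ (with equality iff $G[X]$ is a disjoint union of triangles) and $e(G[X])\le |X|-1$ otherwise (with equality iff $G[X]$ is a disjoint union of triangles together with exactly one star on the remaining $j$ vertices, for any admissible $j$). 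Writing $e(G)\le i(n-i)+e(G[X])$ and matching the value in Theorem~\ref{C3} then forces equality throughout: $G$ must contain the complete bipartite graph $K_{i,n-i}$ and $G[X]$ must be an extremal $P_{4}$-free graph.

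Given this structure, a direct optimisation of $f(i)=i(n-i)+e(G[X])$ in $i$ around $n/2$, distinguishing the cases $3\mid i$ and $3\nmid i$, selects precisely the values of $i$ listed for each residue of $n\pmod 6$ in the second table, together with all compatible star sizes $j$ satisfying $1\le j\le i$ and $3\mid(i-j)$. For $n\in\{6,7,8\}$ the stability machinery above is too coarse and the sporadic graphs $G_{1},G_{2},G_{3}$ of Figure~\ref{678} appear; I would dispatch these by hand, enumerating all extremal $n$-vertex $C_3\square P_2$-free graphs via a degree-sequence and neighbourhood analysis. The hardest step will be the structural reduction, in particular ruling out configurations where the non-crossing edges straddle both sides in subtle combinatorial positions, and the careful small-$n$ enumeration, since $G_{1},G_{2},G_{3}$ arise as genuinely sporadic extremal graphs rather than limits of the general families.
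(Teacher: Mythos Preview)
Your approach is genuinely different from the paper's, and there is a real gap in it.

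The paper never attempts to find the bipartite structure directly. Instead, the proof of Theorem~\ref{C3} establishes the \emph{strict} inequality $e(G)\le f_{C_3\square P_2}(n)-1$ whenever $P_6^2\subseteq G$ and $n\ge 9$ (this is inequality~(\ref{eq-strong}), proved by the long case analysis in Section~\ref{sec5}). Consequently every extremal graph for $C_3\square P_2$ on $n\ge 9$ vertices is $P_6^2$-free, and the classification is read off verbatim from Xiao et~al.'s Theorem~\ref{EP6}. For $n\in\{6,7,8\}$ the sporadic graphs $G_1,G_2,G_3$ are pinned down during that same case analysis (Lemma~\ref{n=6} and equations~(\ref{eq-G2}),~(\ref{eq-G3})).

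The gap in your plan is the implicit assumption that the max-cut bipartition $(X,Y)$ gives a \emph{complete} bipartite graph between the parts. Both of your prism constructions rely on specific crossing edges: to rule out edges on both sides you need the seven edges $x_1y_3,x_2y_3,y_1x_3,y_2x_3,x_1y_1,x_2y_2,y_3x_3$ to be present, and to rule out a $P_4$ in $G[X]$ you need the six edges $a_1b_1,a_2b_1,a_3b_2,a_4b_2,a_1b_2,a_4b_1$. A max-cut only tells you that each vertex has at least half of its neighbours across the cut; it does not force \emph{any} particular crossing edge to exist. Your sentence ``the cut has size $i(n-i)$ very close to $\lfloor n^2/4\rfloor$'' already presupposes the inside-part edges are $O(n)$, which is exactly what you are trying to prove --- the reasoning is circular. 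And even if you broke the circularity asymptotically, Theorem~\ref{EC3} is stated for every $n\ge 6$, so $O(1)$ and $\Theta(n)$ arguments with unspecified constants would still leave all small $n$ unhandled. A workable direct route would have to first bound the number of missing crossing edges (for instance via an Erd\H{o}s--Simonovits stability step, as in the proof of Theorem~\ref{main}) and then argue that suitable $b_1,b_2,x_3,y_3$ can always be chosen; but that machinery only kicks in for large $n$, which is why the paper takes the $P_6^2$-route instead.
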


\medskip
\noindent
{\bf Outline of the paper.}
This paper is organized as follows.
In Section \ref{sec2},  we shall present some preliminaries,
including the Tur\'{a}n-type results on paths, and
a  deep structure theorem of Simonovits
for graphs whose decomposition family contains a linear forest.
In Section \ref{sec3}, we will give the proof of Theorem \ref{main}.
In Section \ref{sec4},
in order to determine the
$n$-vertex extremal graphs for $C_3^{ \square}$ for every $n\ge 6$, we will provide some lemmas for our purpose.
Later, the proofs of Theorems \ref{C3} and \ref{EC3}
are given in Section \ref{sec5}, respectively.
Finally, some open problems are proposed in Section \ref{sec6}.

\section{Preliminaries}

\label{sec2}

 \subsection{The Tur\'an-type results for paths}

To begin with, we need to introduce a result of Erd\H{o}s and Gallai \cite{EG}.

\medskip

\begin{theorem}[Erd\H os and Gallai \cite{EG}]
\label{Erd\H os and Gallai}
The maximum number of edges in an $n$-vertex $P_k$-free graph is ${n(k-2)\over 2}$, that is, $\ex (n,P_k)\leq {n(k-2)\over 2}$, with equality if and only if $(k-1)| n$ and the graph is a vertex disjoint union of ${n\over k-1}$ copies of $K_{k-1}$.
\end{theorem}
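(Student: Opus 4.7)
The plan is induction on $n$, followed by a short classification of the equality cases.

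For the base case $n \le k-1$, any graph on $n$ vertices has at most $\binom{n}{2} \le n(k-2)/2$ edges, so the bound is trivial. For the inductive step with $n \ge k$, I would first reduce to the connected case: if the components of $G$ are $G_1, \ldots, G_r$ with $r \ge 2$, each $G_i$ is $P_k$-free on fewer than $n$ vertices, so by induction $e(G_i) \le |V(G_i)|(k-2)/2$, and summing over $i$ yields the desired bound.

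The core of the argument is the connected case with $n \ge k$, where the goal is in fact the strict inequality $e(G) < n(k-2)/2$. I would take a longest path $P = v_1 v_2 \cdots v_t$ in $G$; because $G$ is $P_k$-free we have $t \le k-1$, and because $P$ is longest we have $N(v_1), N(v_t) \subseteq V(P)$. Using a P\'osa-style rotation at $v_1$ (replacing $P$ by $v_{j-1}v_{j-2}\cdots v_1 v_j v_{j+1}\cdots v_t$ whenever $v_1 v_j \in E(G)$) I collect a set $S$ of endpoints of longest paths supported on $V(P)$, each of whose neighbourhoods still lies in $V(P)$. Connectedness together with $n > t$ supplies a vertex $u \notin V(P)$ adjacent to some $v_i \in V(P)$; comparing $u$ with the endpoints reachable by rotation then either produces a path longer than $P$ (a contradiction) or yields bounds on the edges between $V(P)$ and $V(G)\setminus V(P)$ strong enough, when combined with the trivial bound $e(G[V(P)]) \le \binom{t}{2}$, to force the strict inequality $e(G) < n(k-2)/2$.

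Given this strict inequality in the connected case on $\ge k$ vertices, the equality analysis is short. Every component $C$ of an extremal graph must saturate its own inequality, so every component satisfies $|C| \le k-1$. On such a component, $e(C) \le \binom{|C|}{2} \le |C|(k-2)/2$, with equality iff $|C| = k-1$ and $C = K_{k-1}$. Hence the extremal graphs are exactly vertex-disjoint unions of copies of $K_{k-1}$, and in particular $(k-1) \mid n$.

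The main obstacle is executing the rotation argument in the connected case both rigorously and cleanly, i.e.\ controlling the set of endpoints reachable via rotations, ruling out any accidental extension beyond $P$, and converting the resulting structural information into a sharp edge count; once this is in hand the equality case follows from the elementary inequality $\binom{|C|}{2} \le |C|(k-2)/2$.
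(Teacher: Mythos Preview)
The paper does not prove this statement: it is quoted in the preliminaries as a classical theorem of Erd\H{o}s and Gallai, attributed to \cite{EG}, and used as a black box (in fact only its $k=4$ instance, recorded separately as a corollary, is ever invoked). There is therefore no argument in the paper for you to compare against.

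Your outline is reasonable, and the peripheral pieces --- the base case, the reduction to components, and the equality analysis assuming strict inequality in the connected case on at least $k$ vertices --- are correct. The real gap is precisely the one you flag yourself. P\'osa rotation gives information about the neighbourhoods of endpoints of longest paths, but it does not by itself bound $e(G)$: in your proposed decomposition you control $e(G[V(P)])\le\binom{t}{2}$ and hope to control the crossing edges, yet you say nothing about the edges inside $V(G)\setminus V(P)$, and without that the count cannot close. The standard way to finish is to use the longest-path idea not to bound edges directly but to show that a connected $P_k$-free graph on at least $k$ vertices contains a vertex of degree at most $\lfloor (k-2)/2\rfloor$; deleting that vertex and inducting on $n$ gives $e(G)\le \frac{(n-1)(k-2)}{2}+\frac{k-2}{2}=\frac{n(k-2)}{2}$, and strictness in the connected case follows because adjoining any such vertex to an extremal configuration (a disjoint union of copies of $K_{k-1}$) already creates a $P_k$.
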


Note that the bound in Theorem \ref{Erd\H os and Gallai}
is sharp only in the case $(k-1)| n$. Furthermore,
for the general cases, say $n=(k-1)t +r$,
this bound was refined by Faudree and Schelp \cite{FS},
and independently by Kopylov \cite{K} using a different method.

\medskip
\begin{theorem}[Faudree and Schelp \cite{FS}, Kopylov \cite{K}] \label{extremal for path}
 Let $n=(k-1)t+r$ with $t\geq 1$ and $0\leq r\leq k-2$. Then
$$\ex(n,P_k)=t\binom{k-1}{2}+\binom{r}{2}.$$
Moreover, each $n$-vertex extremal graph for $P_k$ is isomorphic to either
$$ tK_{k-1}\cup K_{r} $$
or
$$ (t-s-1)K_{k-1}\cup\left(K_{\frac{k-2}{2}}\otimes\overline{K}_{\frac{k}{2}+s(k-1)+r}\right)$$ for some $s\in[0,t-1]$ when $k$ is even and $r\in\{\frac{k}{2}, \frac{k-2}{2}\}$.
\end{theorem}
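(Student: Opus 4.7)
The plan is to establish matching upper and lower bounds and then pin down the extremal graphs. For the lower bound, the graph $tK_{k-1}\cup K_r$ is patently $P_k$-free with exactly $t\binom{k-1}{2}+\binom{r}{2}$ edges. The second family $(t-s-1)K_{k-1}\cup(K_{(k-2)/2}\otimes \overline{K}_{k/2+s(k-1)+r})$ requires a short check: every path in the join component alternates between the clique side of size $(k-2)/2$ and the independent side, so it uses at most $2\cdot\frac{k-2}{2}+1=k-1$ vertices, hence the graph is $P_k$-free; and a direct computation, in which the $s$-dependence cancels, shows that its edge count matches $t\binom{k-1}{2}+\binom{r}{2}$ precisely when $r\in\{k/2,(k-2)/2\}$, which is exactly the restriction stated in the theorem.

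For the upper bound I would reduce to the connected case: an arbitrary $P_k$-free graph is a disjoint union of $P_k$-free connected subgraphs, so a sharp bound and an extremal list for connected $P_k$-free graphs, together with a convexity/exchange argument for apportioning vertices among components, yield the required global bound. The core ingredient is Kopylov's theorem: if $G$ is a connected $P_k$-free graph on $n\ge k$ vertices, then
\[
e(G)\le \max\bigl\{h(n,k,1),\;h(n,k,\lfloor(k-1)/2\rfloor)\bigr\},\qquad h(n,k,a):=\binom{k-a}{2}+a(n-k+a),
\]
with the extremal connected graphs being the join $K_a\otimes(\overline{K}_{n-k+a}\cup K_{k-2a})$ for the two optimal choices of $a$.

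The main obstacle is proving this connected-case bound. My approach would follow the classical rotation argument. Take a longest path $P=v_1v_2\cdots v_m$ in $G$, so $m\le k-1$; by further minimising $d(v_1)+d(v_m)$ among all longest paths, the Erd\H{o}s--Gallai crossing trick---if $v_1v_{i+1},\,v_iv_m\in E(G)$, then $v_iv_{i-1}\cdots v_1 v_{i+1}\cdots v_m$ is another longest path, so any neighbour of $v_i$ outside $P$ would extend it, contradicting maximality---forces $N(v_1)\cap V(P)$ and $\{v_{i+1}:v_iv_m\in E(G)\}$ to be disjoint subsets of $\{v_2,\ldots,v_m\}$. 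This gives $d(v_1)+d(v_m)\le m-1\le k-2$, and a weighted edge count parametrised by a single index $a$ (the size of a \emph{core} clique in $V(P)$) yields $e(G)\le h(n,k,a)$, whose maximum over admissible $a$ is attained precisely at $a=1$ or $a=\lfloor(k-1)/2\rfloor$.

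Finally, the characterisation of extremal graphs drops out of tracing equality through each of the inequalities. In the connected case, equality forces a component to be either $K_{k-1}$, $K_r$ (when it has fewer than $k-1$ vertices), or the join $K_{(k-2)/2}\otimes \overline{K}_{N-(k-2)/2}$; reassembling components, the only extremal disjoint unions are $tK_{k-1}\cup K_r$ in all cases, and additionally the second family exactly when $k$ is even and $r\in\{k/2,(k-2)/2\}$, since this is the unique scenario in which the two Kopylov maxima coincide with the $tK_{k-1}\cup K_r$ count.
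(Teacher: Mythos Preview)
The paper does not prove this theorem; it is quoted verbatim from the literature (Faudree--Schelp and Kopylov) and used as a black box to derive Corollary~\ref{cor-6}. So there is no ``paper's own proof'' to compare against, and your proposal is effectively a sketch of the classical argument.

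As a sketch, the architecture is correct: verify the two constructions are $P_k$-free and attain the stated edge count, reduce to connected components, invoke Kopylov's sharp bound $e(G)\le\max\{h(n,k,1),h(n,k,\lfloor(k-1)/2\rfloor)\}$ for connected $P_k$-free graphs, and trace equality. Two points deserve comment. First, a minor inaccuracy: it is not true that ``every path in the join component alternates between the clique side \ldots\ and the independent side'', since consecutive vertices of the clique $K_{(k-2)/2}$ are adjacent; the correct observation is that any two independent-side vertices on a path are separated by at least one clique vertex, so a path uses at most $(k-2)/2+1$ independent vertices and hence at most $2\cdot\frac{k-2}{2}+1=k-1$ vertices in total. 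Your conclusion survives, but the justification should be rewritten.

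Second, and more substantively, the heart of the matter is the connected-case inequality, and here your sketch is too thin to count as a proof. The rotation/crossing trick you describe does give $d(v_1)+d(v_m)\le m-1$, which is exactly the Erd\H{o}s--Gallai step behind Theorem~\ref{Erd\H os and Gallai}; but the passage from there to ``a weighted edge count parametrised by a single index $a$ \ldots\ yields $e(G)\le h(n,k,a)$'' is precisely Kopylov's contribution and is not at all immediate. Kopylov's argument requires a careful analysis of how many edges can be incident to each vertex of the longest path versus the rest of the graph, together with a convexity step over the parameter $a$; Faudree and Schelp obtain the same bound by an independent induction. Either way, this is several pages of work, and your one-sentence placeholder is the genuine gap in the proposal. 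If you intend to supply a self-contained proof rather than cite the result (as the paper does), you will need to fill in this step in full.
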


In this paper, we shall use the case $k=4$.

\begin{corollary} \label{cor-6}
Let $G$ be a $P_4$-free graph on $n$ vertices.
Then
\[  e(G) \le n + \frac{1}{2}(j^2-3j), \]
where $j\in \{0,1,2\}$ and $j \equiv n (\mathrm{mod}~3)$. More precisely,
\begin{itemize}
\item[(a)]
 if $n=3t$, then $e(G)\le 3t$, with equality  if and only if
$G=tC_3$;

\item[(b)]
 if $n=3t+1$, then $e(G)\le 3t$,
with equality if and only if
\[ G \in \{ tC_3 \cup K_1 \} \cup \{
(t-\ell -1) C_3 \cup K_{1,3\ell +3}:  0\le \ell \le t-1 \}\];

\item[(c)]
 if $n=3t+2$, then $e(G)\le 3t+1$, with equality if and only if
\[ G\in \{ tC_3 \cup K_2\} \cup \{
(t-\ell -1) C_3 \cup K_{1,3\ell +4}:  0 \le \ell \le t-1 \}. \]
\end{itemize}
\end{corollary}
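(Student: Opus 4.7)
The plan is to deduce Corollary \ref{cor-6} as a direct specialization of Theorem \ref{extremal for path} to the case $k=4$, so essentially all the work has already been done; what remains is bookkeeping that matches the two formulations.

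First I would write $n = 3t + r$ with $0 \le r \le 2$, and observe that the three residues $r = 0, 1, 2$ correspond exactly to the three cases $j = 0, 1, 2$ in the statement. Applying Theorem \ref{extremal for path} with $k = 4$ immediately gives
\[
  \ex(n, P_4) = t\binom{3}{2} + \binom{r}{2} = 3t + \binom{r}{2}.
\]
A quick check shows this matches the claimed bound $n + \tfrac{1}{2}(j^2 - 3j)$ with $j = r$: for $r=0$ both sides equal $3t$; for $r=1$ both sides equal $3t$ (since $\tfrac12(1-3) = -1$); and for $r=2$ both sides equal $3t+1$ (since $\tfrac12(4-6) = -1$). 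This establishes the edge bound in all three parts.

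Next I would read off the extremal graphs from the second half of Theorem \ref{extremal for path}. With $k=4$, the exceptional values of $r$ for which a second family of extremal graphs appears are $r \in \{\tfrac{k-2}{2}, \tfrac{k}{2}\} = \{1, 2\}$; for $r = 0$ only the disjoint union $tK_3 = tC_3$ is extremal, giving part (a). For $r \in \{1, 2\}$, besides the graph $tK_3 \cup K_r$ (which is $tC_3 \cup K_1$ or $tC_3 \cup K_2$), there is the second family
\[
  (t - s - 1) K_3 \cup \left( K_{1} \otimes \overline{K}_{2 + 3s + r} \right), \qquad s \in [0, t-1].
\]
Since $K_1 \otimes \overline{K}_m = K_{1,m}$ is a star, substituting $s = \ell$ and $r \in \{1,2\}$ yields exactly $(t-\ell-1)C_3 \cup K_{1, 3\ell+3}$ in case (b) and $(t-\ell-1)C_3 \cup K_{1, 3\ell+4}$ in case (c).

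There is no genuine obstacle here; the only mildly delicate point is keeping the parameter identifications consistent (matching $r$ with $j$, recognizing that $K_1 \otimes \overline{K}_m$ is a star, and verifying that the arithmetic $\tfrac12(j^2-3j)$ reproduces $\binom{r}{2} - r$ which is the offset from $n$). I would present the argument as three short verifications organized by the value of $r \pmod 3$.
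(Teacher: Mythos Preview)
Your proposal is correct and is exactly the approach the paper intends: the corollary is stated immediately after Theorem~\ref{extremal for path} with the remark ``we shall use the case $k=4$,'' and your specialization (including the identification $K_1\otimes\overline{K}_m=K_{1,m}$ and the arithmetic matching $3t+\binom{r}{2}$ to $n+\tfrac12(j^2-3j)$) is precisely that. One tiny housekeeping point: Theorem~\ref{extremal for path} is stated for $t\ge 1$, so if you want the corollary to hold for all $n$ you should note that the cases $n\le 2$ are trivial; otherwise nothing is missing.
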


Recently, the stability result for (connected) $P_k$-free graphs
was successively studied by F\"{u}redi, Kostochka, Luo and Verstra\"{e}te \cite{FKV2016,FKLV2018}, and Ma and Ning \cite{MN2020}.
To prove Theorem \ref{main},
we need to apply the  following variant due to Yuan \cite{YUAN}.
In the original statement, it requires $t\ge 3$.
We mention here that
the result also holds for $t=2$ by a similar argument.

\begin{lemma}[Yuan \cite{YUAN}]
\label{stablity for non-connecte}
Let $c$ be a positive integer and $n$ be sufficiently large. Given an $n$-vertex $P_k$-free graph $G$ with $e(G)\geq \emph{ex}(n,P_k)-c$. Let $\ell=\min\{n/(16c),n/(2k-2)\}$ and $t=\lfloor {k}/{2} \rfloor \ge 2$. If $k$ is odd, then $G$ contains a copy of $\ell K_{t-1}$. If $k$ is even, then $G$ contains either a copy of $\ell K_{k-1}$ or a copy of $K_{t-1,\ell}$.
\end{lemma}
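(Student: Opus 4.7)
The plan is to exploit a clean structural observation for $P_k$-free graphs and then apply the Faudree--Schelp/Kopylov classification from Theorem \ref{extremal for path} to the residual graph. The observation is this: in any $P_k$-free graph $G$, whenever a connected component contains $K_{k-1}$ as a subgraph, the whole component must \emph{equal} $K_{k-1}$. Indeed, any vertex $v$ outside such a clique with a path to it would, together with a Hamilton path through the clique, produce a $P_k$. Consequently $G$ decomposes as $G=m\,K_{k-1}\cup G''$, where $m$ counts the components isomorphic to $K_{k-1}$ and $G''$ is simultaneously $P_k$-free and $K_{k-1}$-free on $n''=n-m(k-1)$ vertices. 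The lemma thus reduces to showing $m\ge\ell$, or (when $k$ is even) $\Delta(G'')\ge\ell$ so that $G''\supseteq K_{1,\ell}$.

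Suppose, for a contradiction, that $m<\ell$ and, when $k$ is even, $\Delta(G'')<\ell$. The hypothesis $\ell\le n/(2k-2)$ yields $n''>n/2$. The key step is to upgrade the Erd\H{o}s--Gallai bound on $e(G'')$ by a linear-in-$n''$ saving. For $k$ odd, Theorem \ref{extremal for path} furnishes the \emph{unique} extremal graph $tK_{k-1}\cup K_r$, which contains $K_{k-1}$; forbidding $K_{k-1}$ in $G''$ therefore loses a positive fraction of edges per vertex. For $k$ even, the second extremal family $K_{(k-2)/2}\otimes\overline{K}_{\star}$ is $K_{k-1}$-free but has maximum degree $n''-1$; the extra hypothesis $\Delta(G'')<\ell$, together with the connected-graph (Kopylov) form of Theorem \ref{extremal for path} applied componentwise, forces every component of $G''$ to have size $O(\ell)$, again costing linearly many edges. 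Combining with $e(mK_{k-1})=m\binom{k-1}{2}$, one gets
$$e(G)\;\le\;m\binom{k-1}{2}+\bigl(\ex(n'',P_k)-\eta\,n''\bigr)\;\le\;\ex(n,P_k)-\tfrac{\eta n}{2}$$
for an effective $\eta=\eta(k)>0$. This already falls below $\ex(n,P_k)-c$ whenever $c<\eta n/2$, which is exactly the regime $\ell\le n/(16c)$ with $n$ sufficiently large, delivering the contradiction.

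The main obstacle is calibrating $\eta(k)$---equivalently, the numerical factor $16$---uniformly in $k$ (subject to $\lfloor k/2\rfloor\ge 3$). The typical per-component saving for a $\{P_k,K_{k-1}\}$-free graph over the $P_k$-extremal is only of order $1$, so the linear-in-$n''$ gain must be accumulated from $\Theta(n''/\ell)$ components, and it is this accounting that pins down $\ell=\Theta(n/c)$. A second delicate point is that ruling out $K_{1,\ell}$ has to simultaneously rule out the second extremal family of Theorem \ref{extremal for path}; otherwise the even-$k$ case cannot be separated from the book-like regime carrying a near-spanning star. Threading both the ``many disjoint $K_{k-1}$'s'' regime and the ``large star'' regime through a single uniform edge count, with sharp dependence on $c$, is where the bulk of the technical work lies.
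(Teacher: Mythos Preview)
The paper does not contain a proof of this lemma: it is stated with the attribution ``Yuan~\cite{YUAN}'' and is quoted verbatim from Yuan's paper \emph{Extremal graphs of the $p$-th power of paths} (European J.\ Combin.\ 104 (2022) 103548), so there is no in-paper argument to compare your proposal against.

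As for your sketch itself, the structural opening is sound: in a $P_k$-free graph any connected component meeting a $K_{k-1}$ must coincide with it, so $G=mK_{k-1}\cup G''$ with $G''$ both $P_k$-free and $K_{k-1}$-free. The intended contradiction---that forbidding $K_{k-1}$ (and, for even $k$, large stars) forces a linear edge deficit---is the right target, and your identification of the two extremal regimes from Theorem~\ref{extremal for path} is exactly what Yuan's argument exploits. However, what you have written is a plan rather than a proof: you explicitly flag the calibration of $\eta(k)$ and the uniform per-component saving as ``the main obstacle'' and ``where the bulk of the technical work lies,'' without carrying either out. In particular, the claim that a $\{P_k,K_{k-1}\}$-free graph on $n''$ vertices satisfies $e(G'')\le \mathrm{ex}(n'',P_k)-\eta\,n''$ for a fixed $\eta>0$ requires a careful componentwise application of the connected Kopylov bound together with a case analysis on component sizes, and tying the resulting constants to the specific value $\ell=\min\{n/(16c),\,n/(2k-2)\}$ is not automatic. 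If you intend to supply a self-contained proof, that accounting must actually be written out; otherwise the citation to Yuan suffices, as in the paper.
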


\subsection{A  deep theorem of Simonovits}

The decomposition family was firstly introduced by Simonovits \cite{Simonovits1982},
and it was used to study the Tur\'{a}n numbers
of non-bipartite graphs.
For a fixed graph $H$, the error term $o(n^2)$ in (\ref{E-S-S})
can be refined by the decomposition family of $H$.
Moreover, the decomposition family of a non-bipartite graph can help us to determine the finer structure of its extremal graphs.
Now let us give the definition of the decomposition family of a graph.

\medskip

\begin{definition}[Simonovits] \label{def-8}
Given a graph $L$ with  $\chi(L)=p+1\geq2$. Let $\mathcal{M}(L)$ be the family of minimal graphs $M$ satisfying the following:
there exists an integer $t\ge 1$ depending on $L$ such that
$L \subseteq (M\cup \overline{K}_t) \otimes T_{p-1}((p-1)t)$.
 We call $\mathcal{M}(L)$ the {\it decomposition family} of $L$.
\end{definition}

In other words, a graph $M$ belongs to $\mathcal{M}(L)$ if
 putting a copy of $M$ (but not any of its proper subgraphs) into one vertex part of a large $p$-partite Tur\'{a}n graph
$T_p(pt)$ will lead to a copy of $L$.
For example, we have $\mathcal{M}(sK_{r+1}) =\{sK_2\}$ for each
$s\ge 1$ and $r\ge 2$.
Moreover, for the dodecahedron $D_{20}$
 and icosahedron $I_{12}$ in Figure \ref{regular-polyhedrons},
 it was proved in \cite{Simonovits1974,Sim1974} that
$\mathcal{M}(D_{20})=\{6K_2\}$ and
$\mathcal{M}(I_{12})=\{P_6\}$, respectively.
In addition, for the wheels (the pyramid) in Figure \ref{odd-wheel-prism}, it can be seen in \cite{Yuan2021} that
$\mathcal{M}(W_{2k+1})=\{K_{1,k},C_{2k}\}$
and $\mathcal{M}(W_{2k})=\{K_2\}$,
since $\chi (W_{2k})=4$ and $\chi (W_{2k}-e)=3$
for each edge $e$ in the cycle.

\medskip
The following lemma follows immediately from the definition.

\medskip
\begin{lemma} \label{Lfree}
Given a graph $L$ with  $\chi(L)=p+1\geq2$.
If $G$ is a graph obtained from the $p$-partite Tur\'{a}n graph $T_p(n)$ by embedding an extremal graph for $\mathcal{M}(L)$ into one part, then $G$ is $L$-free. Consequently,
we get $\mathrm{ex}(n,L)\ge e(T_p(n)) + \mathrm{ex}(\lceil {n}/{p}\rceil, \mathcal{M}(L))$.
\end{lemma}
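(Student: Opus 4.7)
The plan is to prove the $L$-freeness by contradiction, using the defining property of the decomposition family, and then to count edges in the construction.

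Suppose for contradiction that $L\subseteq G$, where $G$ is built from $T_p(n)$ with parts $V_1,\dots,V_p$ by placing an extremal $\mathcal{M}(L)$-free graph $G_0$ on the part $V_1$. First I would observe that the parts $V_2,\dots,V_p$ are still independent sets in $G$, while $V_1$ carries the edges of $G_0$. If the embedded copy of $L$ used no edge inside $V_1$, then $L$ would inherit a proper $p$-coloring from the partition $V_1,\dots,V_p$, contradicting $\chi(L)=p+1$. Hence at least one edge of $L$ lies inside $V_1$.

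Next, let $V_i' = V(L)\cap V_i$ and let $M_0 = G[V_1']$, the subgraph of $G_0$ induced by the vertices of $L$ falling in $V_1$. Since all edges of $T_p(n)$ between distinct parts are present, the induced subgraph $G[V(L)]$ is a subgraph of $M_0\otimes K_{|V_2'|,\dots,|V_p'|}$. Now choose any integer $t\ge \max\{|V_1'|,|V_2'|,\dots,|V_p'|\}$; then
\[
L\ \subseteq\ G[V(L)]\ \subseteq\ (M_0\cup \overline{K}_t)\otimes T_{p-1}((p-1)t),
\]
so $M_0$ itself satisfies the condition appearing in Definition~\ref{def-8}. By the minimality clause of that definition (under the subgraph partial order), $M_0$ must contain some $M\in \mathcal{M}(L)$ as a subgraph, because we can repeatedly remove edges from $M_0$ while preserving the inclusion until we reach a minimal witness. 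But then $G_0\supseteq M_0\supseteq M$, contradicting the choice of $G_0$ as an $\mathcal{M}(L)$-free graph. Therefore $G$ is $L$-free.

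For the numerical consequence, I would simply take $V_1$ to be a part of maximum size $\lceil n/p\rceil$ in $T_p(n)$ and place on it an extremal $\mathcal{M}(L)$-free graph of order $\lceil n/p\rceil$, contributing $\mathrm{ex}(\lceil n/p\rceil,\mathcal{M}(L))$ extra edges on top of the $e(T_p(n))$ edges of the Tur\'an graph. This yields an $L$-free graph on $n$ vertices with exactly $e(T_p(n))+\mathrm{ex}(\lceil n/p\rceil,\mathcal{M}(L))$ edges, giving the stated lower bound on $\mathrm{ex}(n,L)$.

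The only delicate point is the minimality step: one must be sure that satisfying the containment condition is a monotone property (preserved under adding edges/vertices), so that any graph $M_0$ meeting the condition indeed dominates some member of $\mathcal{M}(L)$. This is immediate from the definition since enlarging $M_0$ (or increasing $t$) only makes the right-hand side $(M_0\cup\overline{K}_t)\otimes T_{p-1}((p-1)t)$ larger as a graph, and hence cannot destroy the inclusion $L\subseteq\cdots$. Apart from this observation, both parts of the lemma are direct.
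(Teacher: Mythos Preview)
Your proof is correct and is essentially a detailed unpacking of what the paper states without proof, namely that the lemma ``follows immediately from the definition'' of the decomposition family. The paper gives no argument beyond that one sentence, so your contradiction argument and the explicit handling of the minimality clause are simply the details behind the paper's claim rather than a different approach.
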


In the proof of Theorem \ref{main},
we will use the following theorem, which
attributes to a deep theorem of Simonovits \cite[p. 371]{Simonovits1974}.
Roughly speaking, it states that if the decomposition family
$\mathcal{M}(L)$  contains a linear forest
(a graph in which each component is a path),
then the extremal graphs for $L$ have a very simple and symmetric
structure.

\medskip
\begin{theorem}[Simonovits \cite{Simonovits1974}]
\label{family-extremal-graphs}
Let $L$ be a given graph with $p=\chi(L)-1$.
If the decomposition family $\mathcal{M}(L)$ contains a linear forest, then there exist $r=r(L)$ and $n_0=n_0(r)$ such that if $n\geq n_0$, then each $n$-vertex extremal graph $G$ for $L$ belongs to the family of graphs satisfying the following.
\begin{itemize}
\item[(a)]
 After deleting at most $r$ vertices of $G$,
the remaining graph $G'$ satisfies
 \[ G^\prime= G^1\otimes G^2 \otimes \cdots \otimes G^p, \]
  where the graph $G^i$ satisfies
  $||V(G^i)|-n/p|\leq r$ for each $1\leq i \leq p$.

  \item[(b)]
   For each $1\leq i\leq p$, the graph $G^i$ consists of vertex disjoint copies of non-isomorphic connected graphs $H^j_is$ with $|V(H_{i}^j)|\leq r$ such that any two copies, say $F_1$ and $F_2$, of $H_{i}^j$s are symmetric subgraphs in $G$: there exists an isomorphism $\psi:F_1\rightarrow F_2$ such that for every $x\in V(F_1)$ and $y\in G-F_1-F_2$, $xy$ is an edge if and only if $\psi(x)y$ is an edge.
   \end{itemize}
\end{theorem}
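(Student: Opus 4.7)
The plan is to follow Simonovits' symmetrization strategy built on top of the Erd\H{o}s--Simonovits stability theorem. First, I would apply the stability theorem to $L$ (which has $\chi(L)=p+1$): any extremal $L$-free graph $G$ differs from the Tur\'{a}n graph $T_p(n)$ in $o(n^2)$ edges, and in particular admits a partition $V(G)=V_1\cup\cdots\cup V_p$ with $|V_i|=n/p+o(n)$ such that both the total number of intra-class edges and the number of missing cross-class edges is $o(n^2)$.

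Second, I would perform a cleaning step: call $v\in V_i$ \emph{atypical} if either $d(v,V\setminus V_i)<(1-\tfrac{1}{p}-\varepsilon)n$ or $d(v,V_i)$ exceeds an absolute constant $\Delta_0=\Delta_0(L)$. Using the extremality of $G$ via a standard swap (any atypical vertex can be moved or replaced to strictly increase the edge count without creating an $L$), the set $B$ of atypical vertices has size bounded by some $r_1=r_1(L)$; this is the set one deletes in part (a). After removing $B$, every remaining vertex in $V_i$ is joined to almost all of $V\setminus V_i$, and intra-class degrees are uniformly bounded by $\Delta_0$.

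Third, the linear-forest hypothesis controls the intra-class structure. Pick $M_0\in\mathcal{M}(L)$ with $M_0$ a linear forest, and let $t$ be the integer from Definition \ref{def-8} so that $L\subseteq(M_0\cup\overline{K}_t)\otimes T_{p-1}((p-1)t)$. Since the typical vertices form a near-complete blow-up of $T_p$, any bounded set of vertices in one class has common neighbourhoods of size $\gg t$ in each of the other classes. Therefore, if some $G[V_i\setminus B]$ contained a copy of $M_0$, one could assemble a copy of $L$, contradicting $L$-freeness. Hence each $G[V_i\setminus B]$ is $M_0$-free; combined with the bounded maximum degree $\Delta_0$, a breadth-first-search argument (a connected graph with max degree $\Delta_0$ and more than $\Delta_0^{k-1}$ vertices contains every fixed $P_k$) together with a pigeonhole across components yields that every connected component has at most $r=r(L)$ vertices. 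Maximality of cross-class edges between typical vertices follows from the same swap argument, giving $G^\prime=G^1\otimes\cdots\otimes G^p$ with the stated size constraint.

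Finally, the symmetric-subgraph condition in (b) is proved by a \emph{replication swap}. If $F_1,F_2$ are two isomorphic components of $G^i$ with distinct external neighbourhood patterns in $V(G)\setminus(F_1\cup F_2)$, one of them, say $F_2$, has strictly smaller sum of cross-degrees; deleting $F_2$ and reinserting a clone of $F_1$ attached in the same way yields an $L$-free graph with strictly more edges, contradicting extremality. The main obstacle throughout is the \emph{quantitative} passage in step two: ensuring that $|B|$ is absolutely bounded by $r_1(L)$ rather than merely $o(n)$. Simonovits handles this via his progressive-induction framework, which iterates the stability/cleaning dichotomy and tracks the edge surplus at each stage to show that the procedure terminates within a bounded number of rounds.
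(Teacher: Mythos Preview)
The paper does not prove Theorem~\ref{family-extremal-graphs}; it is quoted verbatim from Simonovits~\cite{Simonovits1974} and used as a black box in the proof of Theorem~\ref{main}. There is therefore no ``paper's own proof'' to compare your proposal against.

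As a standalone sketch of Simonovits' argument, your outline captures the broad architecture correctly: stability, cleaning of atypical vertices, bounding component sizes via the linear-forest hypothesis, and the replication swap for symmetry. You have also correctly identified the genuine difficulty yourself: the passage from $|B|=o(n)$ to $|B|\le r_1(L)$ is not a routine cleaning step and is exactly where Simonovits' progressive-induction machinery is needed. Your step-three argument that bounded maximum degree plus $M_0$-freeness forces bounded component size is essentially right for paths but is stated a bit loosely for general linear forests (one needs that a sufficiently long path already contains any fixed linear forest as a subgraph, which is true but should be said). The replication swap in step four also requires a short check that the cloned copy cannot create a copy of $L$, which follows from the decomposition-family characterization but is not entirely automatic. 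These are the places where a full write-up would need care, but none of them is a fatal gap in the strategy.
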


Theorem \ref{family-extremal-graphs} can benefit us to characterize
the approximate structure of all  extremal graphs for
the prism $C_{2k+1}^{\square}$.
We remark that this theorem was also used by Yuan \cite{YUAN}
for treating the Tur\'{a}n problem of the power of a path.
In particular, we refer the readers to \cite{Yuan2022}
for graphs $L$ whose decomposition family $\mathcal{M}(L)$ contains a matching.

\section{Proof of Theorem \ref{main}}

\label{sec3}

The following lemma is simple, but it plays an important role in our proof.

\medskip

\begin{lemma}\label{decompositionofpowerpath}
The decomposition family $\mathcal{M}(C_{2k+1}^{\square} )=\{P_4\}$.
\end{lemma}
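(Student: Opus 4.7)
Since $\chi(C_{2k+1}\square P_2)=3$, Definition \ref{def-8} with $p=2$ gives $T_{p-1}((p-1)t)=\overline{K}_t$, so $\mathcal{M}(C_{2k+1}\square P_2)$ consists of the minimal graphs $M$ with $C_{2k+1}\square P_2\subseteq(M\cup\overline{K}_t)\otimes\overline{K}_t$ for some $t\ge 1$. My plan is to (i) exhibit $P_4$ as satisfying the condition, (ii) rule out each proper subgraph of $P_4$, and (iii) show that every graph satisfying the condition already contains $P_4$ as a subgraph. Together these force $\mathcal{M}=\{P_4\}$.

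For (i), it suffices to find a bipartition $(A,B)$ of $V(C_{2k+1}\square P_2)$ with $A$ independent and $G[B]$ a subgraph of $P_4\cup\overline{K}_s$: embedding $B$ into the $P_4$-side and $A$ into the $\overline{K}_t$-side then works for $t$ large. Labeling the two cycles as $a_0a_1\cdots a_{2k}$ and $b_0b_1\cdots b_{2k}$ with matching edges $a_ib_i$, I would take
\[
A=\{a_0,a_2,\ldots,a_{2k-2}\}\cup\{b_1,b_3,\ldots,b_{2k-1}\},\qquad B=V(C_{2k+1}\square P_2)\setminus A.
\]
A direct check (no two $A$-indices within a single cycle are cyclically adjacent, and the $A$-index sets of the two cycles are disjoint) confirms that $A$ is independent of size $2k$, while $G[B]$ consists of precisely the three edges $a_{2k-1}a_{2k}$, $a_{2k}b_{2k}$, $b_{2k}b_0$, forming a $P_4$ on $\{a_{2k-1},a_{2k},b_{2k},b_0\}$ with the remaining $2k-2$ vertices of $B$ isolated in $G[B]$. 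Hence any $t\ge 2k$ works.

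For (ii) and (iii), the key tool is a counting argument inside the host. Writing $V_u=V(M)\cup U$ for the $M$-side (with $|U|=t$) and $V_v$ for the other side, every odd cycle in the host uses an odd number of $M$-edges by bipartite parity; applied to the two vertex-disjoint $(2k+1)$-cycles $C_1,C_2$ of the prism, this yields vertex-disjoint $M$-edge sets $E_1,E_2$ with $|E_i|$ odd. A double count of cross edges of $C_i$ gives $|V(C_i)\cap V_u|=(2k+1+|E_i|)/2$; since $V_v$ is edgeless, each matching edge $a_ib_i$ must have at least one endpoint in $V_u$, forcing $I_1\cup I_2=\{0,\ldots,2k\}$ where $I_1=\{i:a_i\in V_u\}$ and $I_2=\{i:b_i\in V_u\}$, hence $|I_1\cap I_2|=(|E_1|+|E_2|)/2\ge 1$. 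Each index in $I_1\cap I_2$ contributes an $M$-edge between $V(M)\cap V(C_1)$ and $V(M)\cap V(C_2)$. Part (ii) is now immediate: for $M=P_3=p_1p_2p_3$, $E_1\ne E_2$ would be two edges of $P_3$ sharing $p_2$, violating $V(E_1)\cap V(E_2)=\emptyset$; for $M=2K_2$ the forced matching $M$-edge would lie between the two components, but $2K_2$ has none; $M=P_3\cup K_1$ reduces to $P_3$.

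For (iii), any $M$ admitting the embedding contains vertex-disjoint edges $u_1u_2\in E_1$ and $w_1w_2\in E_2$ together with at least one matching $M$-edge joining $V(M)\cap V(C_1)$ to $V(M)\cap V(C_2)$. A refinement of the index-parity analysis above shows that, when $|E_1|=|E_2|=1$, this matching edge must be of the form $u_iw_j$: indeed, if its endpoint on the $V(C_1)$-side were some $M$-vertex $y\notin\{u_1,u_2\}$, then tracking the forced positions of $u$-side vertices in both cycles leads to the same kind of index-cover contradiction as in the $2K_2$ case, because the parity pattern $I_1=\{j,j+1,j+3,j+5,\ldots\}$ cannot cover $\{0,\ldots,2k\}\setminus I_2$ when $I_2$ has the analogous shape. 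Consequently $\{u_1u_2,u_iw_j,w_1w_2\}\subseteq E(M)$ realizes a $P_4$; the higher-$|E_i|$ cases reduce to this one by noting that if the three edges of $E_i$ happen to be consecutive along $C_i$ then $E_i$ itself is a $P_4$, and otherwise a matching-edge/$E_i$-edge pair again supplies the missing third edge. The main obstacle I anticipate is this last step: isolating the precise index-set constraint that forces the matching $M$-edge to hit endpoints of $E_1$ and $E_2$, rather than landing on extraneous $M$-vertices. Once done, minimality together with (i) forces $M=P_4$, completing the proof.
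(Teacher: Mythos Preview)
Your parts (i) and (ii) are correct and carefully done: the explicit bipartition in (i) works exactly as you describe, and the parity/disjointness argument in (ii) does rule out $P_3$ and $2K_2$ (hence every proper subgraph of $P_4$).

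The real gap is in (iii), and you correctly identify it. The claim that the matching $M$-edge must be of the form $u_iw_j$ is the crux, and your sketch is not a proof of it. It \emph{can} be completed: with $E_1=\{a_0a_1\}$ one has $I_1=\{0,1,3,5,\dots,2k-1\}$, and the covering condition $I_1\cup I_2=\{0,\dots,2k\}$ forces $\{2,4,\dots,2k\}\subseteq I_2=\ell+\{0,1,3,\dots,2k-1\}$; checking when the shift of $\{2,4,\dots,2k\}$ by $\ell$ is disjoint from itself modulo $2k+1$ leaves only $\ell\in\{1,2k\}$, and in each case the unique index in $I_1\cap I_2$ is $1$ or $0$, i.e.\ an endpoint of $E_1$ and of $E_2$. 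This is a genuine (if short) calculation, not the ``same kind of index-cover contradiction as in the $2K_2$ case''. Your reduction for $|E_i|\ge 3$ is likewise only a gesture: when the $E_i$-edges are not all consecutive along $C_i$ there are several subcases, and ``a matching-edge/$E_i$-edge pair again supplies the missing third edge'' is not argued.

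The paper avoids all of this by working from the other end. It observes that \emph{every} maximal independent set $I$ of $C_{2k+1}\square P_2$ meets each copy of $C_{2k+1}$ in exactly $k$ vertices, and that the graph induced on $V\setminus I$ is then $P_4$ together with isolated vertices. Since any $M$ satisfying the embedding condition contains $L[V\setminus I]$ (minus isolated vertices) for some independent $I$, and we may enlarge $I$ to a maximal one, $P_4\subseteq M$ follows immediately. This yields (ii) and (iii) simultaneously with no index arithmetic; combined with your (i) it gives $\mathcal{M}=\{P_4\}$ in a few lines. Your approach is salvageable, but the paper's is both shorter and conceptually cleaner.
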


\begin{proof}
The chromatic number $\chi(C_{2k+1}^{\square})=3$. For sufficiently large $t$, it is easy to see that embedding a $P_4$ into one partite of $K_{t,t}$ will lead to a copy of $C_{2k+1}^{\square}$, and 
embedding a proper subgraph of $P_4$ can not produce 
 such a copy. Hence, $P_4\in\mathcal{M}(C_{2k+1}^{\square})$. 
 We claim that $\mathcal{M}(C_{2k+1}^{\square})=\{P_4\}$. 
Suppose on the contrary that $\mathcal{M}(C_{2k+1}^{\square})\neq\{P_4\}$. 
Let $H\in\mathcal{M}(C_{2k+1}^{\square})$ be a graph different from 
$P_4$ and $H$ does not contain a subgraph isomorphic to $P_4$. 
Note that $C_{2k+1}^{\square}$ is  $3$-regular. 
Then $H$ is the disjoint union of some $K_2$, $K_{1,2}$, $K_{1,3}$ and triangles. For sufficiently large $t$, let $V_1$ and $V_2$ be two partite sets of $K_{t,t}$. Embedding a copy of $H$ into $V_1$ will lead to a $C_{2k+1}^{\square}$. We denote the new graph by $K_{t,t}+H$. Since $V_2$ is an independent set of $K_{t,t}+H$, we can see that $H$ has no triangle components. Observe that there are exactly two copies of $C_{2k+1}$ in $C_{2k+1}^{\square}$ and $V_2$ is an independent set. 
Then $H$ does not contain $K_2$ as a component. Since $C_{2k+1}$ is an odd cycle, there must be a $K_{1,2}$ component in $H$, say $uvw$,  such that $uv$ is an edge of one $C_{2k+1}$ and $vw$ is an edge between two $C_{2k+1}$ in $C_{2k+1}^{\square}$. Let $x\notin\{u,w\}$ be a neighbor of $v$ and $y,z\neq v$ be two neighbors of $w$. Then $xy$ or $xz$ is an edge of $C_{2k+1}^{\square}$. Observe that $\{x,y,z\}\subseteq V_2$ and $V_2$ is an independent set. Neither $xy$ nor $xz$ is an edge of $K_{t,t}+H$, a contradiction. Thus, such $H$ does not exist and $\mathcal{M}(C_{2k+1}^{\square})=\{P_4\}$.
\end{proof}

\begin{lemma}\label{no-two-edge}
Let $G=G^1\otimes G^2$ with $|V(G^i)|=n_i\geq 2k+1$ for $i=1,2$.
\begin{itemize}
\item[(a)]
Let $G'$ be the graph obtained from $G$ by adding a new vertex $y$ with $|N_{G^i}(y)|\geq3$ for $i=1,2$. If $G^1\cup G^2$ has an edge incident with $N_G(y)$, then $G'$ contains a copy of $C_{2k+1}^{\square} $.

\item[(b)]
If both $G^1$ and $G^2$  contain an edge, then $G$ contains a copy of $C_{2k+1}^{\square} $.
\end{itemize}
\end{lemma}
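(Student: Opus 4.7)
The plan is to exploit a useful near-bipartition of the prism $C_{2k+1}\square P_2$. Label the two copies of $C_{2k+1}$ by $a_0,\ldots,a_{2k}$ and $b_0,\ldots,b_{2k}$, with matching edges $a_ib_i$, and color $a_i$ with $i\bmod 2$ and $b_i$ with $(i+1)\bmod 2$. A direct check shows that this $2$-coloring is proper on every prism edge except the two edges $a_0a_{2k}$ (both color $0$) and $b_0b_{2k}$ (both color $1$), and each color class has exactly $2k+1$ vertices. Hence to embed $C_{2k+1}\square P_2$ into a graph of the form $G^1\otimes G^2$ it suffices to inject the $2k+1$ color-$0$ vertices into $V(G^1)$ and the $2k+1$ color-$1$ vertices into $V(G^2)$, realize the single ``bad'' edge $a_0a_{2k}$ inside $G^1$, and realize the single ``bad'' edge $b_0b_{2k}$ inside $G^2$; every remaining prism edge crosses the bipartition and is then automatically supplied by the join.

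For part (b), I would use edges $u_1v_1\in E(G^1)$ and $u_2v_2\in E(G^2)$ to set $(a_0,a_{2k}):=(u_1,v_1)$ and $(b_0,b_{2k}):=(u_2,v_2)$, and distribute the remaining $2k-1$ color-$0$ vertices arbitrarily into $V(G^1)\setminus\{u_1,v_1\}$ and the remaining $2k-1$ color-$1$ vertices into $V(G^2)\setminus\{u_2,v_2\}$. The hypotheses $n_1,n_2\geq 2k+1$ make this possible, and by the observation above the resulting map is an embedding of $C_{2k+1}\square P_2$ into $G$.

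For part (a), by the symmetry of $G^1$ and $G^2$ I may assume the given edge $pq$ lies in $E(G^1)$ with $p\in N_{G^1}(y)$. I would assign $y$ to the role of $b_0$, so that the three prism neighbors of $y$ become $a_0,b_1$ (both color $0$, to be placed in $V(G^1)$) and $b_{2k}$ (color $1$, to be placed in $V(G^2)$). Taking $a_0:=p$ and $a_{2k}:=q$ realizes the bad color-$0$ edge $a_0a_{2k}$ as $pq\in E(G^1)$ and provides the prism edge $ya_0$ since $p\in N_{G^1}(y)$. Using $|N_{G^1}(y)|\geq 3$, choose $b_1\in N_{G^1}(y)\setminus\{p,q\}$; using $|N_{G^2}(y)|\geq 3$, choose any $b_{2k}\in N_{G^2}(y)$. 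The latter choice realizes the bad color-$1$ edge $b_0b_{2k}$ as $yb_{2k}\in E(G')$. Finally I would place the remaining $2k-2$ color-$0$ vertices into $V(G^1)\setminus\{p,q,b_1\}$ and the remaining $2k-1$ color-$1$ vertices into $V(G^2)\setminus\{b_{2k}\}$, both possible since $n_1,n_2\geq 2k+1$.

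The main subtlety is ensuring injectivity when selecting the images of $a_0,a_{2k},b_1,b_{2k}$: the lower bounds $|N_{G^1}(y)|\geq 3$ and $|N_{G^2}(y)|\geq 3$ are precisely what leave a fresh vertex after excluding up to two previously chosen ones in the same part. Once injectivity is secured, every prism edge is accounted for---the two bad edges by $pq$ and $yb_{2k}$, the three $y$-edges by the neighborhood choices above, and all remaining cross-class edges by the join $G^1\otimes G^2$---so $G'$ contains a copy of $C_{2k+1}\square P_2$, as desired.
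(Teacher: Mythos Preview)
Your proof is correct and follows essentially the same approach as the paper's: both place the prism vertices alternately into $G^1$ and $G^2$, use the given edge in $G^1$ (respectively the two given edges in part (b)) to realize the one or two monochromatic prism edges, and let the join supply everything else. The paper presents this by explicitly tracing the two $(2k{+}1)$-cycles $xu_1u_2\cdots u_{2k}$ and $yv_1v_2\cdots v_{2k}$ and their matching, while you package the same construction more conceptually via the $2$-coloring $a_i\mapsto i\bmod 2$, $b_i\mapsto (i{+}1)\bmod 2$; the content is identical.
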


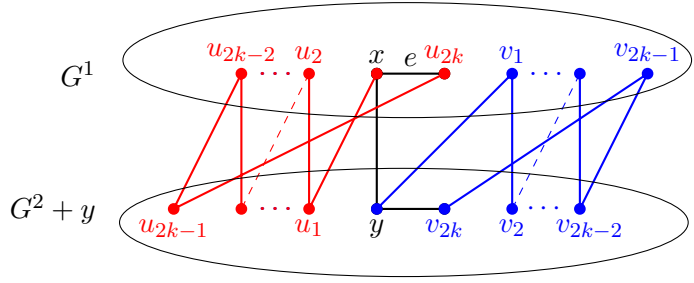
\begin{figure}[H]
\centering
\begin{tikzpicture}[scale=0.9]
\filldraw[black,thick] (0,0) circle (2pt) node[above]{$x$} -- (1,0) circle (2pt);
\filldraw[black,thick] (0,0) circle (2pt) -- (0,-2) circle (2pt) node [below]{$y$} -- (1,-2) circle (2pt);
\filldraw[blue,thick] (0,-2) circle (2pt) -- (2,0) circle (2pt) node[above]{$v_1$} -- (2,-2) circle (2pt) node[below]{$v_2$};
\draw[dashed,blue] (2,-2) -- (3,0);
\filldraw[blue,thick] (3,0) circle (2pt) -- (3,-2) circle (2pt) node[below]{$~~v_{2k-2}$} -- (4,0) circle (2pt) node[above]{$v_{2k-1}$} -- (1,-2) circle (2pt) node[below]{$v_{2k}$};
\filldraw[red,thick] (0,0) circle (2pt) -- (-1,-2) circle (2pt) node[below]{$u_1$} -- (-1,0) circle (2pt) node[above]{$u_2$};

\draw[dashed,red] (-1,0) -- (-2,-2);
\filldraw[red,thick] (-2,-2) circle (2pt) -- (-2,0) circle (2pt) node[above]{$u_{2k-2}$} -- (-3,-2) circle (2pt) node[below]{$u_{2k-1}$} -- (1,0) circle (2pt) node[above]{$u_{2k}$};

\draw[blue] (2.5,0) node{$\ldots$} (2.5,-2) node{$\cdots$} (-1.5,0) node{$\ldots$} (-1.5,-2) node{$\ldots$};
\draw[red] (-1.5,0) node{$\ldots$} (-1.5,-2) node{$\ldots$};
\draw (0.5,0.2) node{$e$} (-4,0) node[left]{$G^1$} (-4,-2) node[left]{$G^2+y$};

\draw (4.65,0.2) arc (0:360:4.2cm and 0.85cm)
 (4.6,-2.2) arc (0:360:4.2cm and 0.8cm);
\end{tikzpicture}
\caption{The graph $G'$.}
\label{G'}
\end{figure}

\begin{proof}
(a) Without loss of generality, we may assume that $G^1$ contains an edge $e$, which is incident with a vertex $x\in N_G(y)$; see Figure \ref{G'}.
Since $|N_{G^1}(y)|\ge 3$,
let $v_1\in V(G^1)$ be a neighbor of $y$ such that $v_1$ is not an end vertex of $e$. Let $v_{2k}\in V(G^2)$ be a vertex adjacent to $y$. Then $yv_1v_2\ldots v_{2k}$ and $xu_1u_2\ldots u_{2k}$ are two copies of $C_{2k+1}$ in $G'$; see
the  blue and red cycles in Figure \ref{G'}, respectively.
Observe that  the above two cycles, together with the matching $\{xy,u_1v_1,u_2v_2,\ldots,u_{2k}v_{2k}\}$, form a copy of $C_{2k+1}^{\square} $. Thus, we get $C_{2k+1}^{\square} \subseteq G'$, as required.

(b) We may assume that $e_1\in E(G^1)$  and $e_2\in E(G^2)$.
Similar with the case (a), one can find a red cycle $C_{2k+1}$
containing the edge $e_1$, and a blue cycle containing $e_2$.
Indeed,  the odd prism $C_{2k+1}^{\square} $ is contained in
$K_{2k+1,2k+1}+\{e_1,e_2\}$.
Combining with $(K_{2k+1,2k+1}+\{e_1,e_2\})\subseteq G$,
we then obtain a copy of $C_{2k+1}^{\square} $ in $G$.
 The proof is completed.
\end{proof}

The following well-known lemma  proved by Erd\H{o}s \cite{Erdos} and Simonovits \cite{Simonoivts1968} is a powerful tool in extremal graph problems; see, e.g.,  Corollary~4.3 in Chapter 6  of \cite{Bo}.

\medskip
\begin{lemma}[Erd\H{o}s \cite{Erdos} and Simonovits \cite{Simonoivts1968}]
\label{minmum degree}
Let $H$ be a graph with $\chi(H)=p+1\geq 3$. If $G$ is an extremal graph for $H$ on $n$ vertices, then $\delta(G)
= (1- {1}/{p})n+o(n)$.
\end{lemma}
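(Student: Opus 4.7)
The plan is to argue by contradiction, using the Erd\H{o}s--Stone--Simonovits asymptotic $\ex(n,H)=\alpha\binom{n}{2}+o(n^2)$ (with $\alpha:=1-1/p$) together with an iterative vertex-deletion scheme. The upper bound $\delta(G)\le \alpha n+o(n)$ is immediate since $\delta(G)\le 2e(G)/n= \alpha(n-1)+o(n)$, so all the work lies in the matching lower bound.

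Suppose, toward a contradiction, that $\epsilon>0$ is fixed and, for arbitrarily large $n$, some extremal $G$ on $n$ vertices satisfies $\delta(G)\le(\alpha-\epsilon)n$. Set $G_0:=G$ and, while $\delta(G_i)\le(\alpha-\epsilon/2)(n-i)$, iteratively delete a minimum-degree vertex $v_i$ to form $G_{i+1}:=G_i-v_i$; let $k$ denote the number of iterations performed. Combining the lower bound
\[ e(G_k)\geq e(G) - (\alpha - \epsilon/2)\bigl(kn - \tbinom{k}{2}\bigr) \]
with the upper bound $e(G_k)\leq\ex(n-k,H)$ and expanding both via Erd\H{o}s--Stone--Simonovits, the leading $(1-1/p)$-terms cancel and one is left with an inequality of the shape
\[ \tfrac{\epsilon}{2}\,k\bigl(n-\tfrac{k-1}{2}\bigr)\leq \alpha k + o(n^2). \]
Choosing $k=\lceil n/4\rceil$ makes the left side $\Theta(\epsilon n^2)$ while the right side is only $o(n^2)$, a contradiction. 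Hence the iteration must terminate at some $k_0<n/4$ with $\delta(G_{k_0})>(\alpha-\epsilon/2)(n-k_0)$.

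To dispose of this early-termination scenario, I would invoke the Erd\H{o}s--Simonovits stability theorem: since $G_{k_0}$ is $H$-free on $n-k_0$ vertices with $e(G_{k_0})$ within $o(n^2)$ of $\ex(n-k_0,H)$, it can be transformed into $T_p(n-k_0)$ by adding or deleting at most $o(n^2)$ edges. Reversing the $k_0$ vertex deletions places $G$ itself $o(n^2)$-close to $T_p(n)$; since every vertex of $T_p(n)$ has degree $\alpha n+O(1)$, this forces $\delta(G)\ge\alpha n-o(n)$, contradicting the hypothesis. The main obstacle is precisely this early-halt case: the long-iteration regime reduces to a clean edge-count comparison, whereas the short-iteration regime necessarily invokes the stability theorem as an auxiliary input, and calibrating the threshold $(\alpha-\epsilon/2)$ so that the two regimes meet without the $o(n^2)$ errors swallowing the accumulated $\Theta(\epsilon n^2)$ surplus is the delicate technical point.
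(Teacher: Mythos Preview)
The paper does not prove this lemma; it is quoted from the literature (Erd\H{o}s, Simonovits, with a pointer to Bollob\'as's monograph), so there is no in-paper argument to compare against, and the question is simply whether your sketch stands on its own.

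Your iterative deletion and edge-count comparison in the ``long iteration'' regime are fine, and in fact your displayed inequality already forces $k_0 n = o(n^2)$, so the termination index satisfies $k_0=o(n)$; hence $G_{k_0}$ really is within $o(n^2)$ of extremal and the appeal to stability for $G_{k_0}$ is legitimate. The genuine gap is the last sentence: from ``$G$ is within $o(n^2)$ edges of $T_p(n)$'' you infer $\delta(G)\ge \alpha n-o(n)$. This implication is false as stated. Take $T_p(n)$ and delete all edges incident with a single vertex (about $\alpha n=o(n^2)$ edges); the resulting graph is $o(n^2)$-close to $T_p(n)$ yet has minimum degree $0$. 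At that step you are using only edit-distance closeness, not the extremality of $G$, so the inference does not go through. A correct finish must exploit extremality in an essential way---for instance, by showing that if $G$ sits inside a near-Tur\'an $p$-partition yet possesses a vertex $v$ with $d(v)\le(\alpha-\varepsilon)n$, then one can modify $v$'s neighbourhood (replacing it by most of the complement of one part) to obtain an $H$-free graph on $n$ vertices with strictly more edges, contradicting extremality. Verifying that the modified graph remains $H$-free is the delicate point and requires an additional averaging/counting argument; this is the substantive content of the Erd\H{o}s--Simonovits proof and is absent from your outline.
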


Now, we are ready to give a proof  of  Theorem \ref{main}.

\begin{proof}[{\bf Proof of Theorem \ref{main}}]
Let $G$ be a  $C_{2k+1}^{\square} $-free graph on $n$ vertices
with maximum number of edges. Our goal is to prove that
$G$ is obtained from a complete bipartite graph $K_{n_a,n_b}$
(where $n_a+n_b=n$)
by embedding an $n_a$-vertex extremal graph for $P_4$ into the part of size $n_a$.
First of all, it follows from Lemmas \ref{Lfree} and \ref{decompositionofpowerpath} that
\begin{equation} \label{the lower bound}
\begin{aligned}
e(G)
&\geq \max\left\{n_an_b + \mbox{ex}(n_a,P_4):n_a+n_b=n\right\}.
\end{aligned}
\end{equation}
A simple calculation shows that
 \begin{equation*}\label{the lower bound 1}
e(G)\geq e(T_2(n)) +\frac{n}{2}+O(1).
\end{equation*}
Now, we are going to apply Theorem~\ref{family-extremal-graphs}.
After deleting at most $r$ vertices of $G$, the resulting graph
$G'$ can be written as the product $G^\prime=G^1\otimes G^2$ with $n/2-r\leq|V(G^i)|\leq n/2+r$ for each $i\in \{1,2\}$. Moreover, each component of $G^i$ is on at most $r$ vertices.
We denote by $D$ the set of deleted vertices.
Then $|D|\le r$. By Lemma~\ref{no-two-edge} (b), it is impossible that
both $G^1$ and $G^2$ contain an edge. Then
we may assume $G^2$ is an independent set of size $n/2+o(n)$. By Theorem \ref{family-extremal-graphs},
for each $i=1,2$, the isolated vertices of $G^i$ are symmetric subgraphs of $G$. Thus, for any vertex $z\in D$,
by the symmetry in Theorem~\ref{family-extremal-graphs},
the isolated vertices of $G^i$ either all are adjacent to $z$, or all are not adjacent to  $z$.

We divide $D$ into the following two sets: if $x\in D$ is not joint to vertices of $G^2$, then let it belongs to $D_2$, otherwise, let it belongs to $D_1$. Let $A_i=V(G^i)\cup D_i$ and denote $|A_i|=n_i$ for $i=1,2$.
Clearly, we have $V(G)=A_1\cup A_2$.
By the symmetry of Theorem \ref{family-extremal-graphs},
{\it each vertex of $A_1$ is joint to all vertices of $G^2$.}
 By Lemma~\ref{minmum degree}, we get $\delta (G) \ge n/2 + o(n)$.
Note  that $G^2$  is an empty graph and $|D_2|\le r$.
Consequently, each vertex of $G^2$ is adjacent to $n/2+o(n)$ vertices of $A_1$. Moreover, each vertex of $D_2$ is adjacent to $n/2 - o(n)$ vertices of $G^1$.

\medskip

\begin{claim}\label{claim}
$G[A_i]$ does not contain a copy of $P_4$ for each $i\in \{1,2\}$.
\end{claim}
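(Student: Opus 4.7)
The plan is to argue by contradiction in both subcases. Suppose that $G[A_i]$ contains a copy of $P_4$ on consecutive vertices $v_1v_2v_3v_4$. I will exhibit a copy of $C_{2k+1}\square P_2$ in $G$, contradicting the hypothesis that $G$ is $C_{2k+1}\square P_2$-free. The construction imitates the proof of Lemma~\ref{no-two-edge}(b): I will build two vertex-disjoint $(2k+1)$-cycles $C,C'$ that use the disjoint edges $v_1v_2$ and $v_3v_4$ of the $P_4$ as their unique non-bipartite edges, and then join them by a carefully chosen cyclic-shift perfect matching.

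For $i=1$, every vertex of $A_1$ is adjacent to every vertex of $Y:=V(G^2)$. Since $|A_1|,|Y|\ge \tfrac{n}{2}+o(n)$, I can pick pairwise distinct vertices $a_1,\dots,a_{k-1},a_1',\dots,a_{k-1}'\in A_1\setminus\{v_1,\dots,v_4\}$ and $y_1,\dots,y_k,y_1',\dots,y_k'\in Y$ and take
\[ C:=v_1v_2y_1a_1y_2a_2\cdots y_{k-1}a_{k-1}y_kv_1, \qquad C':=v_3v_4y_1'a_1'\cdots y_{k-1}'a_{k-1}'y_k'v_3. \]
For $i=2$, the $P_4$ must in fact lie inside $G[D_2]$, because $V(G^2)$ is independent and $D_2$-vertices have no neighbours in $V(G^2)$. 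Each $v_j\in D_2$ has $\tfrac{n}{2}-o(n)$ neighbours in $V(G^1)$, so the common neighbourhood $N:=\bigcap_{j=1}^{4}N_G(v_j)\cap V(G^1)$ has size $\tfrac{n}{2}-o(n)\gg 2k$. I then pick distinct $x_1,\dots,x_k,x_1',\dots,x_k'\in N$ and distinct $y_1,\dots,y_{k-1},y_1',\dots,y_{k-1}'\in V(G^2)$ and take the analogous cycles
\[ C:=v_1v_2x_1y_1x_2y_2\cdots x_{k-1}y_{k-1}x_kv_1, \qquad C':=v_3v_4x_1'y_1'\cdots x_{k-1}'y_{k-1}'x_k'v_3. \]

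The delicate ingredient is the perfect matching between $C$ and $C'$. Denoting the consecutive vertices of $C$ and $C'$ by $c_1,\dots,c_{2k+1}$ and $c_1',\dots,c_{2k+1}'$, I will use the cyclic shift $\phi(c_i):=c_{i-1}'$ with indices modulo $2k+1$, so that the $2k+1$ matching edges are $\{c_ic_{i-1}':i=1,\dots,2k+1\}$; such a shift is a graph isomorphism of the cycles and hence produces a legitimate Cartesian-product structure. The key observation --- and what I expect to be the main obstacle --- is that this specific shift produces only edges of $G$: the matching edge $c_2c_1'=v_2v_3$ is the middle edge of the $P_4$; the edges $c_1c_{2k+1}'$ and $c_3c_2'$ each join some $v_j$ to a vertex of $Y$ (in case $i=1$) or of $N$ (in case $i=2$), and so exist by complete bipartiteness or by the choice of $N$; every remaining matching edge is a bipartite edge of $K_{A_1,Y}$ or of $K_{V(G^1),V(G^2)}$ and therefore lies in $G$. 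The naive identification $\phi(c_i)=c_i'$ would instead force a non-edge such as $v_1v_3$ into the matching, so the shift by $-1$ is essentially forced by the structure of the $P_4$, and pinning down this shift is the whole heart of the argument.
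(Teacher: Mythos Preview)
Your proof is correct. The approach is essentially the same as the paper's: both cases hinge on the fact that a $P_4$ embedded in one side of a large complete bipartite graph forces a copy of $C_{2k+1}\square P_2$. The paper invokes this as a black box via the decomposition family Lemma~\ref{decompositionofpowerpath} (and for $i=2$ first passes to a common neighbourhood in $G^1$ exactly as you do), whereas you unwind the construction explicitly, exhibiting the two alternating odd cycles and the cyclic-shift matching $c_i\mapsto c_{i-1}'$ by hand. Your verification that the shift by $-1$ makes every matching edge either the middle edge $v_2v_3$ of the $P_4$ or a bipartite edge is clean and is precisely the content hidden inside the decomposition-family argument; nothing is missing.
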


\begin{proof}[Proof of Claim 1]
From the above discussion,
we know that the edges between $A_1$ and
$V(G^2)$ form a complete bipartite subgraph.
By Definition \ref{def-8} and
 Lemma  \ref{decompositionofpowerpath}, we have $G[A_1]$ is $P_4$-free.

Assume on the contrary that $G[A_2]$ contains a copy of $P_4$.
Since $A_2=V(G^2) \cup D_2$ and $E(G^2)$ is empty,
 the above copy of $P_4$ must be contained in $G[D_2]$.
Choosing a  set $U$ consisting of $4k$ vertices of $A_2\backslash V(P_4)$ arbitrarily,
by Lemma \ref{minmum degree},
we know that these $4k$ vertices and the vertices of $V(P_4)$
have at least $n/2-o(n)$ common neighbors in $G^1$.
By Definition \ref{def-8} and Lemma \ref{decompositionofpowerpath},
there is a copy of $C_{2k+1}^{\square}$ in the subgraph of $G$ induced by the set $U\cup V(P_4)$ together with its common neighborhood, and so
$G$ contains a copy of $C_{2k+1}^{\square} $, a contradiction. The proof is completed.
\end{proof}

Since $|D|\leq r$, by (\ref{the lower bound}), we have $e(G[A_1])\geq \mbox{ex}(|A_1|,P_4)-O(1)$. Thus, by Lemma~\ref{stablity for non-connecte}, $G[A_1]$ contains a copy of either $K_{1,\ell}$ or $\ell K_{3}$, where $\ell=\Theta(n)$.
Next, we divide the proof into two cases.
 For convenience, we write $N(U) : =\bigcap_{u\in U}N(u)$ for
the common neighborhood of vertices of a set $U$ and
denote $N_A(U):=N(U)\cap A$.

\medskip

{\bf Case 1.} $G[A_1]$ contains a copy of $\ell K_3$.
We claim that $A_2$ is an independent set in $G$.
Assume on the contrary that $G[A_2]$ contains an edge $xy$. Then $xy$ is contained in $G[D_2]$
since $V(G^2)$ is an independent set and there are no edges between $D_2$ and $V(G^2)$.
Let $U_2$ be a vertex set consisting of $x,y$ and
 any $2k-1$ vertices of $A_2\backslash\{x,y\}$.
By Lemma \ref{minmum degree},
these $2k+1$ vertices of $U_2$ have at least $|A_1|-o(n)$
common neighbors in $A_1$.
In other words, there are $o(n)$ vertices of $A_1$
which are outside of $N_{A_1}(U_2 )$.
Since $G[A_1]$ has $\ell $ disjoint copies of $K_3$ and
$\ell=\Theta(n)$, we obtain that there exists a triangle located in $ N_{A_1}(U_2) $.
Consequently, we can choose a set $U_1$ consisting of $2k+1$ vertices in
$N_{A_1}(U_2)$ such that $G[U_1]$ contain a copy of $K_3$.
Note  that $G[U_1]$ contains an edge
and $G[ U_2,U_1]$ forms a complete bipartite subgraph.
By Lemma \ref{no-two-edge} (b), we obtain that $G$ contains a copy of $C_{2k+1}^{\square} $, a contradiction. Thus, there is no edge in $G[A_2]$.
Denote  $|A_1|=n_1$ and $|A_2|=n_2$.
Therefore, it follows from Claim \ref{claim} that
\[ e(G) = n_1n_2 + e(A_1) \le \max\left\{n_an_b + \mbox{ex}(n_a,P_4):n_a+n_b=n\right\}.  \]
Combining with (\ref{the lower bound}),
we obtain the desired bound on $e(G)$.
Moreover, the extremal graphs for the prism $C_{2k+1}^{\square} $
are obtained from a complete bipartite graph $K_{n_a,n_b}$
by embedding an $n_a$-vertex extremal graph for $P_4$ in
the part of size $n_a$. Furthermore,
the extremal graphs for $P_4$ are
also characterized by applying Corollary \ref{cor-6}.

\medskip

{\bf Case 2.} $G[A_1]$ contains a copy of $K_{1,\ell}$.
Let $u$ be the central vertex of $K_{1,\ell}$. Then $u$ has at least $\ell=\Theta(n)$ neighbors in $A_1$. Recall that each vertex of $A_1$
is joint to all vertices of $G^2$.
We denote $\ell_i=|N_{A_i}(u)|$ for each $i\in \{1,2\}$.
 Then $\ell_i = \Theta (n)$.

\begin{claim} \label{claim2-no-edge}
There is no edge in $G[A_1 \setminus \{ u\}]$ incident with a vertex of $N_{A_1}(u)$.
 Correspondingly, there is no edge in $G[A_2]$ incident with a vertex of $N_{A_2}(u)$.
\end{claim}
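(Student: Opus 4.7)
The plan is to derive both halves of Claim~\ref{claim2-no-edge} by applying Lemma~\ref{no-two-edge}(a) with $y=u$; in each case I will exhibit two sets $B_1, B_2$ of size at least $2k+1$ with $G[B_1, B_2]$ complete bipartite, $u$ having at least three neighbors in each $B_i$, and the putative extra edge lying in $E(G[B_1])\cup E(G[B_2])$ and incident with $N_G(u)$. Throughout, I will use two facts from the preceding paragraph: every vertex of $A_1$ is joined to every vertex of $V(G^2)$, so $u$ is adjacent to all of $V(G^2)$; and since $|D_1|\le r=O(1)$, we have $|N_{V(G^1)}(u)|\ge |N_{A_1}(u)|-|D_1|\ge \ell-r=\Theta(n)$.

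For the first half, suppose for contradiction that $vw\in E(G[A_1\setminus\{u\}])$ with $v\in N_{A_1}(u)$. I would pick $B_1\subseteq A_1\setminus\{u\}$ of size $2k+3$ containing $v$, $w$ and three further vertices of $N_{A_1}(u)\setminus\{v,w\}$ (possible since $|N_{A_1}(u)|=\Theta(n)$), and any $B_2\subseteq V(G^2)$ of size $2k+1$. Then $G[B_1, B_2]$ is complete bipartite; $u$ has at least three neighbors in $B_1$ (namely $v$ and the three added vertices from $N_{A_1}(u)$) and all $2k+1$ vertices of $B_2$ as neighbors; and $vw\in E(G[B_1])$ is incident with $v\in N_G(u)$. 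Lemma~\ref{no-two-edge}(a) then forces a copy of $C_{2k+1}\square P_2$ in $G$, contradicting the standing hypothesis.

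For the second half, suppose $v'w'\in E(G[A_2])$ with $v'\in N_{A_2}(u)$. Since $V(G^2)$ is independent, at least one of $v', w'$ lies in $D_2$; for each such $x$, the bound $|N_{V(G^1)}(x)|\ge n/2-o(n)$ supplies an exceptional set $S_x\subseteq V(G^1)$ of size $o(n)$ such that $x$ is joined to every vertex of $V(G^1)\setminus S_x$ (set $S_x=\emptyset$ if $x\in V(G^2)$). I would then choose $B_1\subseteq V(G^1)\setminus(S_{v'}\cup S_{w'})$ of size $2k+1$ containing at least three neighbors of $u$, which is possible because $|N_{V(G^1)}(u)|=\Theta(n)$ dwarfs $|S_{v'}\cup S_{w'}|=o(n)$, and I would take $B_2=\{v',w'\}\cup Z$ for some $Z\subseteq V(G^2)\setminus\{v',w'\}$ with $|Z|=2k+1$. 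Then $G[B_1, B_2]$ is complete bipartite (vertices of $Z\subseteq V(G^2)$ are joined to all of $V(G^1)\supseteq B_1$, and $v', w'$ are joined to all of $B_1$ by construction); $u$ has at least three neighbors in $B_1$ and at least $|Z|=2k+1\ge 3$ neighbors in $B_2$ (namely $Z$); and $v'w'\in E(G[B_2])$ is incident with $v'\in N_G(u)$. Lemma~\ref{no-two-edge}(a) again produces $C_{2k+1}\square P_2\subseteq G$, a contradiction.

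The one genuinely delicate point, and the main obstacle on a first pass, is that vertices of $D_2$ (unlike those of $V(G^2)$) are only guaranteed to be joined to \emph{most} of $V(G^1)$, so a naive choice $B_1\subseteq V(G^1)$ need not satisfy the complete-bipartite requirement of the lemma. Excising the $o(n)$ exceptional non-neighbors of $v'$ and $w'$ from $V(G^1)$ up front fixes this, and is harmless because $u$ retains $\Theta(n)$ neighbors in $V(G^1)\setminus(S_{v'}\cup S_{w'})$, leaving plenty of room to select the three required neighbors of $u$ in $B_1$.
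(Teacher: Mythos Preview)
Your proof is correct. For the second half it is essentially identical to the paper's argument: both pick $2k+1$ common neighbors of $\{v',w'\}$ inside $N_{A_1}(u)\cap V(G^1)$ (which is possible because the exceptional sets have size $o(n)$ while $|N_{A_1}(u)|=\Theta(n)$), pad $\{v',w'\}$ with vertices of $V(G^2)$, and invoke Lemma~\ref{no-two-edge}(a).

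For the first half you take a different route than the paper. The paper simply observes that an edge $vw$ in $G[A_1\setminus\{u\}]$ with $v\in N_{A_1}(u)$ yields a $P_4$ in $G[A_1]$ (take any $z\in N_{A_1}(u)\setminus\{v,w\}$ and use $z\,u\,v\,w$), contradicting Claim~\ref{claim}. Your argument instead applies Lemma~\ref{no-two-edge}(a) directly, mirroring the second half. Both work; the paper's version is a one-line appeal to the already-established $P_4$-freeness, while yours has the virtue of making the two halves completely parallel and independent of Claim~\ref{claim}.
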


\begin{proof}
  If there is an edge in $G[A_1\setminus \{u\}]$ incident with a vertex of $N_{A_1}(u)$, then $G[A_1]$ contains a copy of $P_4$,
  which contradicts with Claim 1.
Hence there is no edge in $G[A_1 \setminus \{ u\}]$ incident with a vertex of $N_{A_1}(u)$.
If there is an edge $xy$ in $G[A_2]$ incident with a vertex of $N_{A_2}(u)$, then $x,y\in D_2$ since $G^2$ has no edge and there are no edges between $D_2$ and $V(G^2)$.
Let $E_2$ be a vertex set consisting of $x,y$ and any
 $2k$ vertices of $N_{G^2}(u)$.
It follows that these $2k+2$ vertices of $E_2$
have $|A_1|-o(n)$ common neighbors in $A_1$.
 Since $u$ has  $\ell_1=\Theta(n)$ neighbors in $A_1$,
 we know that
 \[  |N_{A_1}(E_2) \cap N_{A_1}(u) |
 \ge |N_{A_1}(E_2)| + |N_{A_1}(u) |  - |A_1|
 \ge \ell_1 - o(n) \ge 2k+1. \]
  Thus, we can choose a set $E_1$
  consisting of $2k+1$ vertices of
 $N_{A_1}(E_2)\cap N_{A_1}(u)$.
 Note that $|N_{E_1}(u)| =2k+1, |N_{E_2}(u)| \ge 2k+1$ and
 $ G[E_1,E_2]$ is a complete bipartite graph.
By Lemma \ref{no-two-edge} (a), the sets $E_1$,  $E_2$, and the vertex $u$, induce a graph containing a copy of $C_{2k+1}^{\square} $.
This is a contradiction. Thus
there is no edge in $G[A_2]$ incident with a vertex of $N_{A_2}(u)$.
\end{proof}

From Claim \ref{claim2-no-edge}, we obtain
\begin{align}
\notag
e(A_1) &= e(N_{A_1}(u)\cup \{u\}) + e(A_1\setminus (N_{A_1}(u)\cup \{u\}))   \\
\label{eqA1}
& \leq \ell_1+\mbox{ex}(n_1-1- \ell_1,P_4)  \leq \ell_1 + (n_1-1-\ell_1)  \le \mbox{ex}(n_1,P_4),
\end{align}
where the first inequality holds by Claim \ref{claim},
and the others hold by Corollary \ref{cor-6}.
Applying Claim \ref{claim2-no-edge}, we get
\begin{align} \label{eqA2}
e(A_2) = e(A_2 \setminus N_{A_2}(u))
\leq\mbox{ex}(n_2- \ell_2,P_4) \le n_2 -\ell_2.
\end{align}
Since $u$ has $n_2 - \ell_2$ non-neighbors in $A_2$,
we bound the number of edges between $A_1$ and $A_2$,
\begin{equation}  \label{eqA1A2}
e(A_1,A_2) \le n_1n_2 - (n_2 -\ell_2).
\end{equation}
Note that $e(G)=e(A_1,A_2) + e(A_1) + e(A_2)$.
Combining (\ref{eqA1}), (\ref{eqA2}) and (\ref{eqA1A2}), we have
\begin{align} \label{eq-up-bound}
e(G)  \leq n_1n_2+\mbox{ex}(n_1,P_4) \leq\max\left\{n_an_b + \mbox{ex}(n_a,P_4) :n_a+n_b=n\right\} .
\end{align}
Hence, by (\ref{the lower bound}), we have $e(G)=\max\left\{\mbox{ex}(n_a,P_4)+n_an_b:n_a+n_b=n\right\}$.

Furthermore, using Corollary \ref{cor-6}, we obtain
\[ e(G) = \max\limits_{n_a+n_b=n} \left\{n_a(1+n_b)+\frac{1}{2}(j^2-3j):j\in \{0,1,2\},
j\equiv n_a (\mathrm{mod} ~3)\right\}. \]

Finally, we will characterize the extremal graphs.
If the equality holds in (\ref{eq-up-bound}), then from the above proof,
 we must have equalities in (\ref{eqA1}), (\ref{eqA2})
 and (\ref{eqA1A2}). Denote $t_1:=n_1-1- \ell_1$ and $t_2:=n_2-\ell_2$. Then for $i=1,2$, we have
\begin{align} \label{eq-8}
\ex(t_i, P_4)=t_i.
\end{align}
Moreover, attaining equality in (\ref{eqA1A2}) yields that each vertex of $A_1\backslash\{u\}$
 is adjacent to each vertex of $A_2$.
 By Corollary \ref{cor-6}, if $t_i>0$, then $t_i\equiv 0~(\bmod~3)$, and so  $t_i\geq3$.
In addition, Corollary \ref{cor-6} also implies that an extremal graph for $P_4$ on at least 3 vertices contains a copy of $P_3$. If both $t_1$ and $t_2$ are not zero, then by (\ref{eq-8}),
both $G[A_1\backslash\{u\}]$ and $G[A_2]$ contain a copy
of $P_2$. Hence by Lemma \ref{no-two-edge} (b), $G$ contains a copy of $C_{2k+1}^{\square} $, a contradiction.
Thus, at most one of $t_1$ and $t_2$ is not zero.
Firstly,
if $t_1\neq0$ and  $t_2=0$, then $A_2=N_{A_2}(u)$
and $A_2$ is an independent set in $G$.
Recall in Claim \ref{claim} that $G[A_1]$ is
$P_4$-free. Then $G$ is obtained from $K_{n_1,n_2}$
 by embedding an extremal graph for $P_4$ into the part $A_1$.
Secondly, if $t_1=0$ and $t_2\neq0$, then $u$ is adjacent to all vertices
of $A_1 \setminus \{u\}$.
Moving the vertex $u$
to the set $A_2$, and  we
denote $B_1:=A_1\backslash\{u\}$ and $B_2:=A_2\cup\{u\}$.
Recall in Claim \ref{claim2-no-edge} that there is no edge in $G[A_1\setminus \{u\}]$ incident with a vertex of $N_{A_1}(u)=A_1 \setminus \{u\}$, so $B_1$ is an independent set. Moreover, $G[B_1,B_2]$ is a complete bipartite graph and $G[B_2]$
is $P_4$-free, as desired.
Finally, if $t_1=t_2=0$, then
$G[A_1,A_2]$ is a complete bipartite graph, $G[A_1]$ is
a star $K_{1,n_1-1}$, and $G[A_2]$ is an empty graph, as required.
To sum up, every  extremal graph for $C_{2k+1}^{\square} $ is
the join of an $n_a$-vertex extremal graph for $P_4$
and an $n_b$-vertex independent set.
\end{proof}

\section{Some Lemmas for the triangular prism}

\label{sec4}

In this section, we shall consider the triangular prism.
In the case $k=1$ of Theorem
\ref{main},  we will
throw away the condition that requires the order
$n$ being sufficiently large. Using a quite different method,
we will determine all $n$-vertex extremal graphs of
the prism $C_3^{ \square}$ for every integer $n\ge 6$.

\subsection{A result for the power of a path}

The $p$-th power of a path $P_{k}$ on $k$ vertices, denoted by $P_{k}^p$, is the graph obtained from $P_{k}$ by joining each pair of vertices of $P_{k}$ with distance at most $p$; see Figure \ref{P_k}.

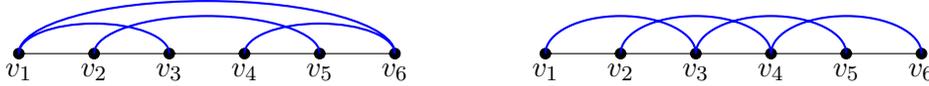
\begin{figure}[H]
\centering
\begin{tikzpicture}

\filldraw (0,0) circle (2pt) node[below]{$v_1$} --
(1,0) circle (2pt) node[below]{$v_2$} --
(2,0) circle (2pt) node[below]{$v_3$} --
(3,0) circle (2pt) node[below]{$v_4$}--
(4,0) circle (2pt)node[below]{$v_5$}--
(5,0) circle (2pt) node[below]{$v_{6}$};

\draw[blue,thick]
 (2,0) arc (0: 180: 1 and 0.4)
 (4,0) arc (0: 180: 1.5 and 0.5)
   (5,0) arc (0: 180: 1 and 0.4)
    (5,0) arc (0: 180: 2.5 and 0.7);

\filldraw (7,0) circle (2pt) node[below]{$v_1$} --
(8,0) circle (2pt) node[below]{$v_2$} --
(9,0) circle (2pt) node[below]{$v_3$} --
(10,0) circle (2pt) node[below]{$v_4$}--
(11,0) circle (2pt)node[below]{$v_5$}--
(12,0) circle (2pt) node[below]{$v_{6}$};

\draw[blue,thick]
 (9,0) arc (0: 180: 1 and 0.5)
 (10,0) arc (0: 180: 1 and 0.5)
  (11,0) arc (0: 180: 1 and 0.5)
   (12,0) arc (0: 180: 1 and 0.5);
\end{tikzpicture}
\caption{The triangular prism $C_{3}^{\square}$  and
the graph $P_{6}^2$, respectively}
\label{P_k}
\end{figure}

In order to study $\ex(n,C_{3}^{\square} )$, we need to use a result from Xiao, Katona, Xiao and Zamora \cite{Xiao} in which they determined the exact value of $\ex(n,P_5^2)$ and $\ex(n,P_6^2)$, respectively.
For the power of a long path, see Yuan \cite{YUAN} for more results.
In addition, the Tur\'{a}n number $\mathrm{ex}(n,P_4^2)$ can be deduced from a general result of
Dirac \cite{DIR}, which states that  if $r\ge 3, n\ge r+1$ and
$G$ is an $n$-vertex graph with more than $e(T_{r-1}(n))$ edges,
then $G$ contains a copy of $K_{r+1}^-$, where $K_{r+1}^-$
is the graph obtained from $K_{r+1}$ by deleting any one edge.
 Setting $r=3$ and observing that $P_4^2=K_4^-$,
 we obtain $\mathrm{ex}(n,P_4^2)=
\lfloor n^2/4\rfloor$ for each integer $n\ge 4$.

Before showing the proofs of Theorems \ref{C3} and \ref{EC3},
we need the following results.

\begin{theorem}[Xiao et al. \cite{Xiao}]  \label{P6}
The maximum number of edges in an $n$-vertex $P^{2}_{6}$-free graph $(n\neq5)$ is
\begin{displaymath} \ex(n,P^{2}_{6})=
\begin{cases}
\left\lfloor\frac{n^{2}}{4}\right\rfloor+\left\lfloor\frac{n-1}{2}\right\rfloor,&
 \text{if}~n\equiv1,2,3 ~(\bmod~6), \\[2mm]
\left\lfloor\frac{n^{2}}{4}\right\rfloor+\left\lceil\frac{n}{2}\right\rceil, &
\text{otherwise.}
\end{cases}
\end{displaymath}
\end{theorem}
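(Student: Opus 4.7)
The plan is to prove matching lower and upper bounds. For the lower bound, I would build $G^{\ast}=K_{a,n-a}$ with $a$ chosen to maximize $a(n-a)+\ex(a,P_4)$ over the allowable residues of $a$ modulo $3$ (so that Corollary~\ref{cor-6} is tight), and then embed a $P_4$-extremal graph $F$ into the part $A$ of size $a$. A short case analysis on $n\bmod 6$ shows that this optimum yields exactly the two-branch formula in the theorem. To verify that $G^{\ast}$ is $P_6^2$-free, I would use two purely combinatorial facts about $P_6^2$: first, $\alpha(P_6^2)=2$; and second, for each of the six independent pairs $\{v_i,v_j\}\subseteq V(P_6^2)$ the induced subgraph of $P_6^2$ on the remaining four vertices contains a $P_4$, and the same is true for every five-vertex induced subgraph of $P_6^2$ (both can be verified by a short hand check). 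Any copy of $P_6^2$ in $G^{\ast}$ therefore places at most two vertices in the independent part $B$, hence at least four in $A$, and these four must induce a subgraph of $F$ that contains $P_4$, contradicting $P_4$-freeness.

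For the upper bound I would proceed by induction on $n$, verifying the base cases (say $6\le n\le 9$) by direct argument and splitting the inductive step according to $\delta(G)$. If $\delta(G)\le\lceil n/2\rceil$, remove a minimum-degree vertex $v$, apply induction to $G-v$, and verify by an explicit residue-by-residue computation that $\ex(n-1,P_6^2)+\lceil n/2\rceil$ does not exceed the claimed $\ex(n,P_6^2)$. In the complementary regime $\delta(G)\ge\lceil n/2\rceil+1$, I would first show that the decomposition family of $P_6^2$ equals $\{P_4\}$ (in parallel with Lemma~\ref{decompositionofpowerpath}), then invoke Theorem~\ref{family-extremal-graphs} to extract a near-bipartition $V(G)=A\cup B$ with $e(G[A,B])$ close to $\lfloor n^2/4\rfloor$. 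The two observations about $P_6^2$ above transplant into $G$: any $P_4$ in $G[B]$ together with four common neighbors in $A$ already builds a copy of $P_6^2$, so $G[B]$ must be $P_4$-free, and an edge in each of $G[A]$ and $G[B]$ still produces a $P_6^2$, forcing one of the two sides to be edgeless. This bounds $e(G)$ by $|A|\,|B|+\ex(\min(|A|,|B|),P_4)$, matching the lower bound.

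The main obstacle will be making the high-min-degree analysis exact for every $n$. Stability only delivers the bipartition up to $o(n^2)$ error, whereas the target exceeds $\lfloor n^2/4\rfloor$ by only a linear term that is itself sensitive to $n\bmod 6$; the split between $\lfloor(n-1)/2\rfloor$ and $\lceil n/2\rceil$ in the statement reflects the delicate interplay between the parity of $n$ and the divisibility of $|A|$ by $3$ that controls $\ex(|A|,P_4)$. A cleaner alternative would be to bypass stability and induct by deleting a carefully chosen pair of vertices --- for instance an edge whose endpoints have small combined degree --- so that $n$ drops by $2$ and the residue class modulo $6$ is preserved; if executable, this would give a uniform argument across all $n\ge 6$, $n\neq 5$.
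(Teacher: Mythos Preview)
The paper does not contain a proof of Theorem~\ref{P6}; the result is quoted verbatim from Xiao, Katona, Xiao and Zamora~\cite{Xiao} and used as a black box in the proofs of Theorems~\ref{C3} and~\ref{EC3}. There is therefore no ``paper's own proof'' to compare your proposal against.

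On the proposal itself: your lower-bound construction and its verification via $\alpha(P_6^2)=2$ together with the $P_4$-in-the-complement check are correct. The upper bound, however, has a real gap. In the high-minimum-degree branch you invoke Theorem~\ref{family-extremal-graphs}, but Simonovits' structure theorem only applies for $n\ge n_0(L)$ sufficiently large, whereas the statement you are proving is for \emph{every} $n\neq 5$. You acknowledge this yourself, but the suggested repair --- deleting a well-chosen edge to reduce $n$ by~$2$ while preserving the residue class modulo~$6$ --- is only a hope, not an argument: you give no mechanism that controls the combined degree of the deleted endpoints tightly enough to match a target that is sensitive to $n\bmod 6$ at the level of a single edge. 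For comparison, the paper's proof of the analogous Theorem~\ref{C3} (same formula, different forbidden graph) avoids stability entirely and runs an elementary induction from $n-6$ to $n$ via the identity $f(n)=f(n-6)+3n-6$, deleting the six vertices of a copy of $P_6^2$ when one exists and falling back on Theorem~\ref{P6} when none does. That six-vertex-deletion template is much closer to what a proof of Theorem~\ref{P6} valid for all $n$ would have to look like than anything based on asymptotic stability.
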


\begin{theorem}[Xiao et al. \cite{Xiao}]  \label{EP6}
Let $n\geq 6$. The extremal graphs for $P_6^2$ are the following ones.

When $n\equiv 1 \pmod{6}$ then $F_n^{\lceil {\frac{n}{2}}\rceil, j}$ and $H_n^{\lfloor {\frac{n}{2}}\rfloor}$;

when $n\equiv 2 \pmod{6}$ then $F_n^{{\frac{n}{2}}, j}$ and $F_n^{{\frac{n}{2}}+1, j}$;

when $n\equiv 3 \pmod{6}$ then $F_n^{\lceil {\frac{n}{2}}\rceil, j}$ and $H_n^{\lceil {\frac{n}{2}}\rceil+1}$;

when $n\equiv 0, 4 ,5 \pmod{6}$ then $H_n^{{\frac{n}{2}}}$, $H_n^{{\frac{n}{2}} +1}$ and $H_n^{\lceil {\frac{n}{2}}\rceil}$, respectively, where $j$ can have all the values satisfying the conditions $j\leq i$ and $3 \mid (i-j)$.
\end{theorem}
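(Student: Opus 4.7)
My plan is to prove Theorem~\ref{EP6} assuming Theorem~\ref{P6} (the value of $\ex(n,P_6^2)$), by showing in two stages that every extremal $P_6^2$-free graph has the form ``complete bipartite plus a $P_4$-free graph embedded in one part,'' and then matching that structure to the listed $H_n^i$ and $F_n^{i,j}$.

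For the structural stage, I would fix an extremal $P_6^2$-free graph $G$ and exploit the identity $\mathcal{M}(P_6^2)=\{P_4\}$. This identity is easy to verify directly: the $4$-subset $\{u_2,u_3,u_5,u_6\}$ of $P_6^2$ induces a $P_4$ whose complement $\{u_1,u_4\}$ is independent, witnessing $P_4 \in \mathcal{M}(P_6^2)$; on the other hand, each proper sub-candidate fails, because $K_2$ and $P_3$ cannot realize the pair of vertex-disjoint triangles present in $P_6^2$, and the unique $4$-subset $\{u_1,u_2,u_5,u_6\}$ of $P_6^2$ that induces $2K_2$ has a non-independent complement $\{u_3,u_4\}$. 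Next, a stability-type argument, exploiting the fact that $\ex(n,P_6^2)-\ex(n-1,P_6^2)$ is at least $\lfloor n/2 \rfloor$, forces $\delta(G)$ to be close to $n/2$ and yields a bipartition $V(G)=A\cup B$ with $|A|,|B|\approx n/2$ and $G[A,B]=K_{|A|,|B|}$. A key sub-lemma analogous to Lemma~\ref{no-two-edge}(b) would then rule out simultaneous edges in both parts: given $a_1a_2\in E(G[A])$ and $b_1b_2\in E(G[B])$, the $3$-$3$ split $\{u_1,u_4,u_5\}\mid\{u_2,u_3,u_6\}$ of $P_6^2$ (each side inducing exactly one edge, with the other $7$ edges crossing) embeds $P_6^2$ into $G$, a contradiction. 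Hence the ``extra'' edges are concentrated in one side, say $G[A]$, which must itself be $P_4$-free.

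The second stage reduces to the optimization problem of maximizing $n_an_b+\ex(n_a,P_4)$ over $n_a+n_b=n$. By Corollary~\ref{cor-6}, the extremal $P_4$-free graphs on $m$ vertices are disjoint unions of triangles when $3\mid m$, or disjoint triangles together with an additional star when $3\nmid m$, producing exactly the $H_n^i$ and $F_n^{i,j}$ patterns. A short case split on $n\pmod 6$ identifies the optimal $n_a\in\{\lfloor n/2\rfloor,\lceil n/2\rceil,\lceil n/2\rceil+1\}$ and recovers the extremal list in each residue class.

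The main obstacle, I expect, will be establishing the structural stage for all $n\geq 6$ rather than only $n$ sufficiently large. Simonovits' theorem (Theorem~\ref{family-extremal-graphs}) applies only for $n\geq n_0$, so for small $n$ I would verify the cases $n\in\{6,7,8\}$ directly by hand and then run an induction for $n\geq 9$, using the fact that the gap $\ex(n,P_6^2)-\ex(n-1,P_6^2)\geq\lfloor n/2\rfloor$ makes removing a minimum-degree vertex a productive inductive step.
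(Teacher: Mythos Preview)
The paper does not prove Theorem~\ref{EP6} at all; it is quoted verbatim from Xiao, Katona, Xiao and Zamora~\cite{Xiao} and used as a black box (see Section~\ref{sec4}, where Theorems~\ref{P6} and~\ref{EP6} are both stated with attribution and no argument). So there is no ``paper's own proof'' to compare against, and your proposal is an attempt to reprove an external result.

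On its own merits, your outline is reasonable in spirit but has a genuine gap at the structural stage. Your verification that $\mathcal{M}(P_6^2)=\{P_4\}$ is fine, and the $3$--$3$ split $\{u_1,u_4,u_5\}\mid\{u_2,u_3,u_6\}$ does correctly show that a complete bipartite graph with an edge in each part contains $P_6^2$. The problem is the sentence ``a stability-type argument \dots\ yields a bipartition $V(G)=A\cup B$ with $G[A,B]=K_{|A|,|B|}$.'' This is the entire heart of the theorem, and you have not indicated how to get it for all $n\ge 9$. Simonovits' Theorem~\ref{family-extremal-graphs} gives such a structure only for $n\ge n_0(P_6^2)$, with an unspecified and typically large $n_0$; it does not help for moderate $n$. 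Your proposed remedy --- delete a minimum-degree vertex and induct --- is delicate: the gap $\ex(n,P_6^2)-\ex(n-1,P_6^2)$ equals $\lfloor n/2\rfloor$ or $\lceil n/2\rceil$ depending on the residue, while the average degree of an extremal $G$ is roughly $n/2+1$, so $G-v$ need not be extremal on $n-1$ vertices, and even when it is you must still analyse all ways of reattaching $v$ to the (inductively known) structure without creating a $P_6^2$. None of that analysis is present.

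In short: your second (optimization) stage is correct and routine once the first stage is in hand, but the first stage as written is a plan rather than a proof, and the hard step --- obtaining the exact complete-bipartite backbone for \emph{every} $n\ge 9$ --- is precisely what the cited paper~\cite{Xiao} works to establish.
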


Comparing the above theorems with
Theorems \ref{C3} and \ref{EC3},
it is surprising that the Tur\'{a}n numbers of graphs $C_{3}^{\square}$
and $P_6^2$ are completely the same,
although their structures are different.
In addition, it is noteworthy that the extremal graphs
for $C_{3}^{\square}$
and $P_6^2$ are also the same except for three small graphs $G_1,G_2$ and $G_3$, depicted  in Figure \ref{678}.

In fact,
it is a key relation between $C_{3}^{\square}$ and $P_6^2$
 that $\mathcal{M}(C_3^{ \square}) =
\mathcal{M}(P_6^2)= \{P_4\}$, where $\mathcal{M}(L)$
is the decomposition family of $L$ stated in Definition \ref{def-8}.
To a certain extent,
the above phenomenon
reveals that the decomposition family of a graph could
determine the Tur\'{a}n number of a graph and
also dominate the structures of the extremal graphs.

\subsection{The $C_{3}^{\square}$-free graphs containing $P_6^2$ as a subgraph}

Before showing the proof of Theorems \ref{C3} and \ref{EC3},
we next present some useful lemmas
for a special class of $C_{3}^{\square}$-free graphs which contain
the $2$-power graph $P_6^2$ as a subgraph.

\medskip

\begin{lemma}\label{n=6}
Let $G$ be a  $C_3^{\square}$-free graph on $6$ vertices.
If $G$ contains a copy of $P_6^2$, then
$ e(G)\leq12$,
 with equality holding if and only if $G=G_1$.
\end{lemma}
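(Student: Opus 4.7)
I would pass to the complement. Since $|V(G)|=6$ and $G\supseteq P_6^2$, the non-edges of $G$ form a subgraph of $\overline{P_6^2}$. One checks that the six edges of $\overline{P_6^2}$ are exactly the pairs between $A:=\{v_1,v_2,v_3\}$ and $B:=\{v_4,v_5,v_6\}$ at distance at least $3$ along the underlying $P_6$; in particular, $\overline{P_6^2}$ is bipartite with parts $A$ and $B$, and consequently both $G[A]$ and $G[B]$ are triangles. Thus $G$ contains a copy of $C_3\square P_2$ (necessarily a spanning one, since $|V(G)|=6$) as soon as the bipartite graph $G[A,B]=K_{3,3}-\overline{G}$ admits a perfect matching, and the bound $e(G)\le 12$ reduces to showing $e(\overline{G})\ge 3$.

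The main combinatorial tool I would use is the following Hall-type observation: if $F$ is a subgraph of $K_{3,3}$ with $|E(F)|\le 3$, then $K_{3,3}-F$ fails to have a perfect matching exactly when $F\cong K_{1,3}$. Indeed a Hall violator of size $\ge 2$ would force at least $2\cdot 2=4$ missing edges, while a violator of size $1$ means that some vertex has lost all three of its bipartite edges, i.e.\ $F$ contains (hence equals) $K_{1,3}$.

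Applying this with $F=\overline{G}$: if $|E(\overline{G})|\le 2$ then no $K_{1,3}$ is possible, so $G[A,B]$ has a perfect matching and $G$ contains $C_3\square P_2$, contradicting the hypothesis. Hence $e(\overline{G})\ge 3$, proving $e(G)\le 12$.

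For the uniqueness, suppose $e(G)=12$, so $|E(\overline{G})|=3$. By the dichotomy above, $\overline{G}\cong K_{1,3}$, and since $\overline{G}$ sits inside the bipartite graph $\overline{P_6^2}$, the centre of this star must be a vertex of $\overline{P_6^2}$ of degree at least $3$. The only such vertices are $v_1$ (with neighbours $\{v_4,v_5,v_6\}$) and $v_6$ (with neighbours $\{v_1,v_2,v_3\}$); both choices produce graphs isomorphic to $G_1$ via the reflection $v_i\mapsto v_{7-i}$. Conversely, $G_1$ is prism-free because its unique degree-$2$ vertex lies in only one triangle and has no neighbour to be matched with in the opposite triangle. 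The only step that requires real work is the Hall-type characterisation in the second paragraph; everything else is immediate from the bipartite structure of $\overline{P_6^2}$ and the symmetry $v_i\mapsto v_{7-i}$.
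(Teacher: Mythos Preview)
Your argument is correct and takes a genuinely different route from the paper's. The paper proceeds by direct case analysis: it first checks that $v_1v_6\notin E(G)$ and then splits into the cases $d_G(v_1)\in\{2,3,4\}$, examining in each case which of the six non-edges of $P_6^2$ can be added without creating a prism. Your complement argument is more structural: recognising that $\overline{P_6^2}$ is bipartite on $A=\{v_1,v_2,v_3\}$ and $B=\{v_4,v_5,v_6\}$ immediately gives the two disjoint triangles, and the problem reduces cleanly to forbidding a perfect matching in $K_{3,3}-\overline{G}$. This replaces the paper's hand-checking of individual edges by a single Hall computation, and the uniqueness characterisation (the only obstruction with three edges is $K_{1,3}$, whose centre must be $v_1$ or $v_6$) drops out for free. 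The paper's approach, on the other hand, needs no auxiliary lemma and is entirely self-contained.

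Two small points to tighten. First, in your Hall step the phrase ``a Hall violator of size $\ge 2$ would force at least $2\cdot 2=4$ missing edges'' is literally true only for $|S|=2$; for $|S|=3$ one gets an isolated vertex on the opposite side, which is again the $K_{1,3}$ case. Second, in the uniqueness paragraph you should make explicit that $G$ being prism-free forces $G[A,B]$ to have no perfect matching (this is just the contrapositive of what you proved), before invoking the dichotomy to conclude $\overline{G}\cong K_{1,3}$. Both are one-line fixes and do not affect the substance.
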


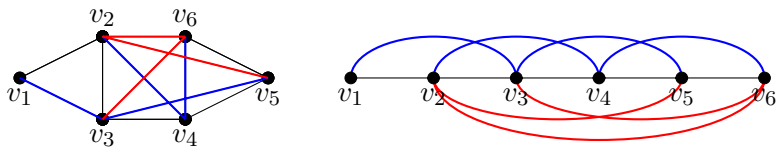
\begin{figure}[H]
\centering
\begin{tikzpicture}[scale=1.1]
\filldraw (0,-0.5) circle (2pt) -- (-1,0) circle (2pt) --
(0,0.5) circle (2pt) --
(2,0) circle (2pt) --
(1,0.5) circle (2pt) --
(1,-0.5) circle (2pt) --
(0,-0.5) circle (2pt) --
 (0,0.5) circle (2pt) --
 (1,0.5) circle (2pt) --
  (0,-0.5) circle (2pt) --
  (2,0) circle (2pt) --
  (1,-0.5) circle (2pt) --
  (0,0.5) circle (2pt);

\filldraw (-1,0) circle (2pt) node[below]{$v_1$} --
(0,0.5) circle (2pt) node[above]{$v_2$} --
(0,-0.5) circle (2pt) node[below]{$v_3$} --
(1,-0.5) circle (2pt) node[below]{$v_4$}--
(1,0.5) circle (2pt)node[above]{$v_6$}--
(2,0) circle (2pt) node[below]{$v_{5}$};

\draw[blue,thick]
 (-1,0) -- (0,-0.5)
 (0,0.5) -- (1,-0.5)
 (0,-0.5) -- (2,0)
 (1,-0.5) -- (1,0.5);

    \draw[red,thick]
    (0,0.5)--(2,0)
    (0,0.5)--(1,0.5)
    (0,-0.5)--(1,0.5);


\draw[blue,thick]
 (5,0) arc (0: 180: 1 and 0.5)
 (6,0) arc (0: 180: 1 and 0.5)
  (7,0) arc (0: 180: 1 and 0.5)
   (8,0) arc (0: 180: 1 and 0.5);

   \draw[red,thick]
 (4,0) arc (180: 360: 1.5 and 0.5)
  (5,0) arc (180: 360: 1.5 and 0.5)
   (4,0) arc (180: 360: 2 and 0.75);

\filldraw (3,0) circle (2pt) node[below]{$v_1$} --
(4,0) circle (2pt) node[below]{$v_2$} --
(5,0) circle (2pt) node[below]{$v_3$} --
(6,0) circle (2pt) node[below]{$v_4$}--
(7,0) circle (2pt)node[below]{$v_5$}--
(8,0) circle (2pt) node[below]{$v_{6}$};
\end{tikzpicture}
\caption{Two drawings of the graph $G_1$}
\label{G1}
\end{figure}

\begin{proof}
The proof is a standard argument on case-analysis.
Since $G$ has $6$ vertices and $P_6^2$ is contained in $G$,
we may assume that $V(G)=\{v_1,v_2,\ldots ,v_6\}$,
where $v_1v_3, v_2v_4,v_3v_5$ and $v_4v_6$ are edges of $G$. Clearly, we have $9\le e(G)\le 15$.
To prove $e(G)\le 12$, it is equivalent to show that there missed at least three edges
in $G$ whenever $K_6$ is $C_{3}^{\square}$-free.

First of all, we claim that $v_1v_6$ can not be an edge of $G$.
Otherwise, if $v_1v_6\in E(G)$, then
two triangles $v_1v_2v_3$ and $v_6v_4v_5$, together with
three edges $v_1v_6, v_2v_4,v_3v_5$ form a copy of $C_{3}^{\square} $, a contradiction.
Thus, we get $2\le d_G(v_1)\le 4$.

{\bf Case 1.} $d_G(v_1)=2$.
More precisely, we have
$v_1v_4,v_1v_5,v_1v_6\notin E(G)$, and $G$ misses
at least three edges of $K_6$.
 Clearly, we have $e(G) \le 12$.
If $e(G)=12$,
then $v_2v_5,v_2v_6$ and $v_3v_6$ are edges of $G$.
In this case, we get $G=G_1$, as required.

{\bf Case 2.}  $d_G(v_1)=3 $.  In other words,
we have either $v_1v_4\in E(G)$ or $v_1v_5 \in E(G)$.
In the former case, if $v_1v_4\in E(G)$, then $v_2v_6 \notin E(G)$.
Otherwise, if $v_2v_6\in E(G)$, then
two triangles $v_1v_2v_3$ and $v_4v_6v_5$, together with
edges $v_1v_4,v_2v_6,v_3v_5$ form a copy of $C_{3}^{\square} $, a contradiction.
Hence, we conclude that $v_1v_6, v_1v_5$ and $v_2v_6$
are not edges of $G$, which leads to $e(G) \le 12$.
If $e(G)=12$, then $v_1v_4,v_2v_5,v_3v_6$ are edges of $G$, which together with two triangles $v_1v_2v_3$ and $v_4v_5v_6$ form a copy of $C_{3}^{\square} $, a contradiction. Hence $e(G)\le 11$.

In the later case, that is, $v_1v_5\in E(G)$ and
$v_1v_4, v_1v_6 \notin E(G)$,
we can get $v_2v_6\notin E(G)$
and $v_3v_6\notin E(G)$.
Otherwise, if $v_2v_6\in E(G)$, then
triangles $v_1v_2v_3$ and $v_5v_6v_4$, with edges
$v_1v_5,v_2v_6,v_3v_4$ form a copy of $C_{3}^{\square} $, a contradiction. If $v_3v_6\in E(G)$,
then triangles $v_1v_2v_3$ and $v_5v_4v_6$, with edges
$v_1v_5, v_2v_4, v_3v_6$ form a copy of $C_{3}^{\square} $, a contradiction. Therefore, we have $e(G)\le 11$.

{\bf Case 3.} $d_G(v_1)=4$.
In this case, we have $v_1v_4,v_1v_5\in E(G)$.  Similar with the later case of Case 2,
we get $v_2v_6\notin E(G)$ and $v_3v_6\notin E(G)$ as well.
Recall that $v_1v_6\notin E(G)$. Consequently, we obtain $e(G)\le 12$.  If $e(G)=12$, then $v_2v_5\in E(G)$.
 By reversing the order of $v_1v_2\cdots v_6$,
it is not hard to see that $G$ is isomorphic to $G_1$,
as required.
\end{proof}

In the following, we always assume that
\begin{equation}  \label{eq-P6}
P^{2}_{6}\subseteq G.
\end{equation}
Assume that the subgraph $P_6^2$ is obtained from $P_6=v_1v_2\cdots v_6$ by joining each pair of vertices of $V(P_6)$ with distance at most $2$.
It can be checked that each vertex $v\in V(G)\setminus V(P_6^2)$ can be adjacent to at most 4 vertices of the copy of $P_6^2$.
Otherwise, if there is a vertex with $5$ neighbors in $P_6^2$,
then we can find a copy of $C_{3}^{\square} $ in $G$, a contradiction.

The following claim holds immediately under the condition (\ref{eq-P6}).

\medskip

\begin{claim} \label{cla-4}
 If $v\in V(G)\setminus V(P_6^2)$ is adjacent to exactly 4 vertices of $P_6^2$, then there are exactly 4 adjacency relations (up to graph isomorphism) between $v$ and $P_6^2$; see Figure \ref{4case}.
 \end{claim}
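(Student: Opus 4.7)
The proof proceeds by a finite case analysis on the set $S$ of non-neighbors of $v$ in $V(P_6^2)$. Since $|V(P_6^2)|=6$ and $v$ has exactly four neighbors there, we have $|S|=2$, giving $\binom{6}{2}=15$ candidate pairs. Using the reflection automorphism $v_i\leftrightarrow v_{7-i}$ of $P_6^2$, these collapse to the following nine representatives:
\[
\{v_1,v_2\},\{v_1,v_3\},\{v_1,v_4\},\{v_1,v_5\},\{v_1,v_6\},\{v_2,v_3\},\{v_2,v_4\},\{v_2,v_5\},\{v_3,v_4\}.
\]

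For each representative the plan is to decide whether the induced subgraph $G[V(P_6^2)\cup\{v\}]$ contains a copy of $C_3\square P_2$; since $G$ is $C_3\square P_2$-free, only those $S$ for which no such copy exists may actually occur. In each ``forbidden'' case I will exhibit two vertex-disjoint triangles together with a perfect matching of three edges between them, typically taking one triangle of the form $vv_av_b$ (with $v_av_b$ an edge of $P_6^2$) and one triangle $v_iv_{i+1}v_{i+2}$ of $P_6$, linked via the length-$2$ chords of $P_6^2$. For instance, when $S=\{v_1,v_3\}$, the triangles $vv_5v_6$ and $v_2v_3v_4$ together with the matching $\{vv_2,\,v_5v_3,\,v_6v_4\}$ form a prism; and the cases $S\in\{\{v_1,v_4\},\{v_1,v_5\},\{v_2,v_3\},\{v_3,v_4\}\}$ admit analogous explicit prisms and are eliminated in the same way.

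In the four remaining cases $S\in\{\{v_1,v_2\},\{v_1,v_6\},\{v_2,v_4\},\{v_2,v_5\}\}$, which will be shown to be precisely the four configurations displayed in Figure~\ref{4case}, I will enumerate every triangle of $G[V(P_6^2)\cup\{v\}]$, list all pairs of vertex-disjoint triangles, and check that none admits a perfect matching between its two sides. The consistent obstruction is that in each candidate pair at least one vertex (typically $v_1$, $v_6$, or a vertex of $S$) has no neighbor at all among the three vertices of the opposite triangle, so no prism can be formed.

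The main obstacle is simply the bookkeeping required to ensure completeness in the four admissible cases, since there one must verify the non-existence of a matching over all disjoint triangle pairs rather than produce a single witness. Once the reflection symmetry reduces the analysis to nine cases and all triangles in each $7$-vertex induced subgraph are listed, each individual verification reduces to short inspection.
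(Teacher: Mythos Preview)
Your proposal is correct and amounts to exactly the finite case check that the paper leaves implicit when it says the claim ``holds immediately'': reduce the $\binom{6}{2}=15$ choices of non-neighbor pair to nine via the reflection $v_i\leftrightarrow v_{7-i}$, exhibit an explicit prism in the five excluded cases, and verify by inspection that no disjoint triangle pair supports a matching in the remaining four, which are precisely the Types~1--4 of Figure~\ref{4case}. The paper gives no written proof here, so your plan is the natural explicit verification.
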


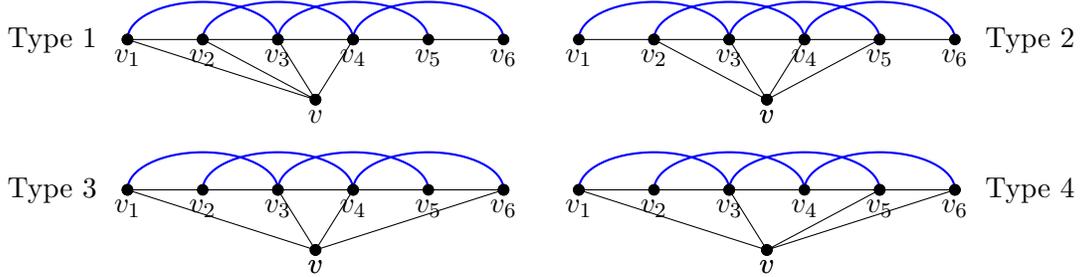
\begin{figure}[H]
\centering
\begin{tikzpicture}

\draw[blue,thick]
 (2,0) arc (0: 180: 1 and 0.5)
  (3,0) arc (0: 180: 1 and 0.5)
   (4,0) arc (0: 180: 1 and 0.5)
    (5,0) arc (0: 180: 1 and 0.5);

\draw (-1,0) node{{Type 1}}
(12,0) node{Type 2}
 (-1,-2) node{Type 3}
 (12,-2) node{Type 4} ;
\filldraw (0,0) circle (2pt) node[below]{$v_1$} -- (1,0) circle (2pt) node[below]{$v_2$} -- (2,0) circle (2pt) node[below]{$v_3$} -- (3,0) circle (2pt) node[below]{$v_4$}--(4,0) circle (2pt)node[below]{$v_5$} -- (5,0) circle (2pt) node[below]{$v_6$};

\filldraw (0,0) circle (2pt) -- (2.5,-0.8) circle (2pt) node[below]{$v$} -- (1,0) circle (2pt);

\filldraw (2,0) circle (2pt) -- (2.5,-0.8) circle (2pt)  -- (3,0) circle (2pt);

\draw[blue,thick]
 (8,0) arc (0: 180: 1 and 0.5)
  (9,0) arc (0: 180: 1 and 0.5)
   (10,0) arc (0: 180: 1 and 0.5)
    (11,0) arc (0: 180: 1 and 0.5);

\filldraw (6,0) circle (2pt) node[below]{$v_1$} -- (7,0) circle (2pt) node[below]{$v_2$} -- (8,0) circle (2pt) node[below]{$v_3$} -- (9,0) circle (2pt) node[below]{$v_4$}--(10,0) circle (2pt)node[below]{$v_5$} -- (11,0) circle (2pt) node[below]{$v_6$};

\filldraw (7,0) circle (2pt) -- (8.5,-0.8) circle (2pt) node[below]{$v$} -- (8,0) circle (2pt);

\filldraw (9,0) circle (2pt) -- (8.5,-0.8) circle (2pt) node[below]{$v$} -- (10,0) circle (2pt);

\draw[blue,thick]
 (2,-2) arc (0: 180: 1 and 0.5)
  (3,-2) arc (0: 180: 1 and 0.5)
   (4,-2) arc (0: 180: 1 and 0.5)
    (5,-2) arc (0: 180: 1 and 0.5);

\filldraw (0,-2) circle (2pt) node[below]{$v_1$} -- (1,-2) circle (2pt) node[below]{$v_2$} -- (2,-2) circle (2pt) node[below]{$v_3$} -- (3,-2) circle (2pt) node[below]{$v_4$}--(4,-2) circle (2pt)node[below]{$v_5$} -- (5,-2) circle (2pt) node[below]{$v_6$};

\filldraw (0,-2) circle (2pt) -- (2.5,-2.8) circle (2pt) node[below]{$v$} -- (2,-2) circle (2pt);

\filldraw (3,-2) circle (2pt) -- (2.5,-2.8) circle (2pt) node[below]{$v$} -- (5,-2) circle (2pt);

\draw[blue,thick]
 (8,-2) arc (0: 180: 1 and 0.5)
  (9,-2) arc (0: 180: 1 and 0.5)
   (10,-2) arc (0: 180: 1 and 0.5)
    (11,-2) arc (0: 180: 1 and 0.5);

\filldraw (6,-2) circle (2pt) node[below]{$v_1$} -- (7,-2) circle (2pt) node[below]{$v_2$} -- (8,-2) circle (2pt) node[below]{$v_3$} -- (9,-2) circle (2pt) node[below]{$v_4$}--(10,-2) circle (2pt)node[below]{$v_5$} -- (11,-2) circle (2pt) node[below]{$v_6$};

\filldraw (6,-2) circle (2pt) -- (8.5,-2.8) circle (2pt) node[below]{$v$} -- (8,-2) circle (2pt);

\filldraw (10,-2) circle (2pt) -- (8.5,-2.8) circle (2pt) node[below]{$v$} -- (11,-2) circle (2pt);
\end{tikzpicture}
\caption{Four types
for the vertex $v$ satisfying $|N(v) \cap V(P^{2}_{6})|=4$}
\label{4case}
\end{figure}

Furthermore,  the following lemma can be verified case by case.

\begin{lemma}\label{claim1}
There are at most $2$ vertices of $V(G)\setminus V(P^{2}_{6})$ adjacent to
exactly $4$ vertices of $V(P^{2}_{6})$. Moreover, if $x,y\in V(G)\setminus V(P_6^2)$ are  such two vertices, then
 there are only $2$ adjacency relations  between $\{x,y\}$ and $P_6^2$, and moreover $xy\notin E(G)$; see Figure \ref{2case}.
\end{lemma}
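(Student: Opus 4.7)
The plan is a finite, systematic case analysis driven by the four neighborhood types already supplied by Claim~\ref{cla-4}. Fix the hosting copy of $P_6^2$ on the path $v_1v_2\cdots v_6$ and record two structural facts that make the enumeration manageable: the triangles inside $P_6^2$ are exactly the four consecutive triples $\{v_i,v_{i+1},v_{i+2}\}$ for $1\le i\le 4$, and any triangle containing a vertex $u\notin V(P_6^2)$ must take the form $\{u,v_a,v_b\}$ with $v_av_b\in E(P_6^2)$ and $\{v_a,v_b\}\subseteq N(u)$. Since $|N(x)\cap V(P_6^2)|=|N(y)\cap V(P_6^2)|=4$, each of $x$ and $y$ lies in at most five such candidate triangles, so the list of triangles available in $G[V(P_6^2)\cup\{x,y\}]$ is short enough to be examined by hand.

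The heart of the argument is to treat the unordered pair $(S_x,S_y)$, where $S_u:=N(u)\cap V(P_6^2)$, as running over all combinations of the four types of Claim~\ref{cla-4} together with their images under the reversal $v_i\mapsto v_{7-i}$, giving six labeled subsets in total. For each unordered pair I would attempt to build a copy of $C_3\square P_2$ inside $G[V(P_6^2)\cup\{x,y\}]$ by selecting two vertex-disjoint triangles from the candidate list and exhibiting a perfect matching between them that avoids the potential edge $xy$. Any pair for which this succeeds is forbidden by $C_3\square P_2$-freeness, and a direct sweep shows that only two unordered pairs survive up to the reversal symmetry; these are precisely the two adjacency relations to be drawn in Figure~\ref{2case}. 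For each surviving pair I would then verify that allowing $xy$ in the matching \emph{does} complete a copy of $C_3\square P_2$, which forces the nonedge conclusion $xy\notin E(G)$.

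For the bound of at most two such vertices, I would assume for contradiction that $z\in V(G)\setminus V(P_6^2)$ is a third vertex with $|N(z)\cap V(P_6^2)|=4$. Then each of the pairs $\{x,z\}$ and $\{y,z\}$ must also realize one of the two surviving adjacency relations. A short inspection of the compatibility list obtained from the enumeration shows that no neighborhood pattern $S_z$ can be simultaneously paired with both $S_x$ and $S_y$ without creating a copy of $C_3\square P_2$, providing the desired contradiction. The main obstacle throughout is the combinatorial bookkeeping: a single overlooked matching could wrongly admit a forbidden pair to the surviving list, so I would tabulate the candidate triangles for each type in advance and then process the pairs in a fixed lexicographic order on the labeled neighborhood subsets, using the inclusion--exclusion bound $|N(x)\cap N(y)\cap V(P_6^2)|\ge 2$ to dispatch quickly the cases in which $N(x)$ and $N(y)$ overlap heavily.
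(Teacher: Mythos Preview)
Your plan is correct and follows essentially the same route as the paper's proof: a systematic case analysis over all unordered pairs of the (labeled) $4$-neighbourhood patterns from Claim~\ref{cla-4}, exhibiting in each forbidden pair two vertex-disjoint triangles together with a $3$-matching not using $xy$, then separately checking that $xy$ completes a prism in the two surviving configurations and that no third $4$-neighbour can be added. The only cosmetic differences are organizational---the paper groups the pairs by ``same type'' versus ``different type'' rather than by labeled subsets, and does not explicitly invoke your intersection bound $|S_x\cap S_y|\ge 2$---but the underlying computation is identical.
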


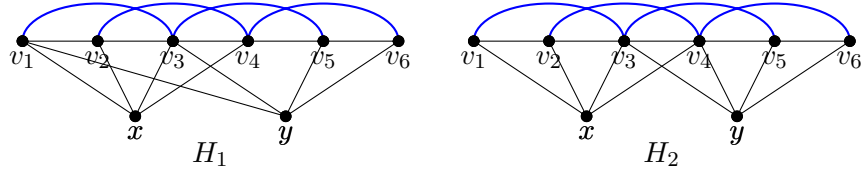
\begin{figure}[H]
\centering
\begin{tikzpicture}

\draw[blue,thick]
 (2,0) arc (0: 180: 1 and 0.5)
  (3,0) arc (0: 180: 1 and 0.5)
   (4,0) arc (0: 180: 1 and 0.5)
    (5,0) arc (0: 180: 1 and 0.5);

\filldraw (0,0) circle (2pt) node[below]{$v_1$} -- (1,0) circle (2pt) node[below]{$v_2$} -- (2,0) circle (2pt) node[below]{$v_3$} -- (3,0) circle (2pt) node[below]{$v_4$}--(4,0) circle (2pt)node[below]{$v_5$} -- (5,0) circle (2pt) node[below]{$v_6$};

\filldraw (0,0) circle (2pt) -- (1.5,-1) circle (2pt) node[below]{$x$} -- (1,0) circle (2pt);

\filldraw (2,0) circle (2pt) -- (1.5,-1) circle (2pt) node[below]{$x$} -- (3,0) circle (2pt);

\filldraw (0,0) circle (2pt) -- (3.5,-1) circle (2pt) node[below]{$y$} -- (2,0) circle (2pt);

\filldraw (4,0) circle (2pt) -- (3.5,-1) circle (2pt) node[below]{$y$} -- (5,0) circle (2pt);
\draw (2.5,-1.5) node{$H_1$};

\draw[blue,thick]
 (8,0) arc (0: 180: 1 and 0.5)
  (9,0) arc (0: 180: 1 and 0.5)
   (10,0) arc (0: 180: 1 and 0.5)
    (11,0) arc (0: 180: 1 and 0.5);

\filldraw (6,0) circle (2pt) node[below]{$v_1$} -- (7,0) circle (2pt) node[below]{$v_2$} -- (8,0) circle (2pt) node[below]{$v_3$} -- (9,0) circle (2pt) node[below]{$v_4$}--(10,0) circle (2pt)node[below]{$v_5$} -- (11,0) circle (2pt) node[below]{$v_6$};

\filldraw (6,0) circle (2pt) -- (7.5,-1) circle (2pt) node[below]{$x$} -- (7,0) circle (2pt);

\filldraw (8,0) circle (2pt) -- (7.5,-1) circle (2pt) node[below]{$x$} -- (9,0) circle (2pt);

\filldraw (8,0) circle (2pt) -- (9.5,-1) circle (2pt) node[below]{$y$} -- (9,0) circle (2pt);

\filldraw (10,0) circle (2pt) -- (9.5,-1) circle (2pt) node[below]{$y$} -- (11,0) circle (2pt);
\draw (8.5,-1.5) node{$H_2$};
\end{tikzpicture}
\caption{The $8$-vertex graphs $H_1$ and $H_2$, respectively.}
\label{2case}
\end{figure}

\begin{proof}
First of all, we show that if $x,y$ are two vertices
in the same type, then $G$ contains a copy of $C_{3}^{\square}$, except that they are of Type 1,
drawing as $H_2$ in Figure \ref{2case}.
Indeed, if $x,y$ are of  Type 1, then
either $N_{P_6^2}(x)=N_{P_6^2}(y)=\{v_1,v_2,v_3,v_4\}$,
or $N_{P_6^2}(x)=\{v_1,v_2,v_3,v_4\}$ and
$N_{P_6^2}(y)=\{v_6,v_5,v_4,v_3\}$.
For the former case,
the triangles $v_1v_3x$ and $v_2v_4y$, together with
three edges $v_1y, v_3v_4, xv_2$, form a copy of
$C_{3}^{\square}$, a contradiction; For the latter case, it is possible, and  stated as  $H_2$ in Figure \ref{2case}.
If $x,y$ are of Type 2, then
the symmetry gives $N_{P_6^2}(x)=N_{P_6^2}(y)=\{v_2,v_3,v_4, v_5\}$. Then the triangles
$v_2v_3x$ and $v_4v_5y$, with the edges
$v_2y, v_3v_5, xv_4$ form a copy of $C_{3}^{\square}$.
If $x,y$ are of Type 3, then
$N_{P_6^2}(x)=N_{P_6^2}(y)=\{v_1,v_3,v_4,v_6\}$.
Thus, the triangles $v_1v_3x$ and $v_4v_6y$
form a copy of $C_{3}^{\square}$ since
$v_1y, v_3v_4, xv_6$ are edges of $G$.
If $x,y$ are of Type 4, then
either $N_{P_6^2}(x)=N_{P_6^2}(y)=\{v_1,v_3,v_5,v_6\}$,
or $N_{P_6^2}(x)=\{v_1,v_3,v_5,v_6\}$
and $N_{P_6^2}(y)=\{v_6,v_4,v_2,v_1\}$.
For the former case, we can see that
triangles $v_1v_3x$ and $v_5v_6y$
form a copy of $C_{3}^{\square}$ since
$v_1y, xv_6, v_3v_5 $ are edges in $G$.
For the latter case, it can be seen that
$G$ contains a copy of $C_{3}^{\square}$
as witnessed by triangles $v_1v_3x$ and
$v_6v_4y$ with three edges $v_1y, xv_6$ and $v_3v_4$.

Secondly, one consider the case that
 $x,y$ are vertices with different types.
More precisely, if $x$ is of Type 1 and $y$ is of Type 2,
then $G$ has a copy of $C_{3}^{\square}$
consisting of the triangles $xv_1v_2, yv_3v_4$
and the edges $v_1v_3, v_2y, xv_4$.
If $x$ is of Type 1 and $y$ is of Type 3,
then the triangles $xv_1v_2, yv_3v_4$
and the edges $v_1y, v_2v_3, xv_4$
form a copy of $C_{3}^{\square}$ in $G$.
If $x$ is of Type 1 and $y$ is of Type 4,
then by symmetry,
we get either $N_{P_6^2}(x)=\{v_1,v_2,v_3,v_4\}$ and
$N_{P_6^2}(y)=\{v_1,v_3,v_5,v_6\}$, or
$N_{P_6^2}(x)=\{v_6,v_5,v_4,v_3\}$
and $N_{P_6^2}(y)=\{v_1,v_3,v_5,v_6\}$.
To avoid a copy of $C_{3}^{\square}$ in $G$,
the former case is possible, which is stated as $H_1$ in Figure \ref{2case}. For the latter case,
one can find a copy of $C_{3}^{\square}$ on
the triangles $xv_4v_6, y v_3v_5$ with the edges
$xv_3, v_4v_5, v_6y$.
If $x$ is of Type 2 and $y$ is of Type 3,
then $G$ contains a copy of $C_{3}^{\square}$,
which appears on the triangles $xv_2v_4, yv_1v_3$
and the edges $xv_3,v_2v_1, v_4y$.
If $x$ is of Type 2 and $y$ is of Type 4,
then $G$ has a copy of $C_{3}^{\square}$
by combining the triangles $xv_3v_4, yv_5v_6$
with the edges $v_3y, v_4v_6, xv_5$.
If $x$ is of Type 3 and $y$ is of Type 4, then
one can find a copy of $C_{3}^{\square}$
on the triangles $xv_1v_3, yv_5v_6$ with the edges
$xv_6, v_1y, v_3v_5$.

Moreover, it can be checked further that $V(G)\setminus V(P_6^2)$
does not contain  three vertices
which are adjacent to $4$ vertices of $P_6^2$.
Otherwise,  if such three vertices exist, i.e.,
we add one more vertex with degree $4$ on $P_6^2$
to the graph $H_1$ or $H_2$,
then one can check that
there is a copy of $C_{3}^{\square}$ in $G$.
Finally, if $xy$ is an edge of $G$, then either $H_1+xy$ or $H_2 +xy$
can have a copy of $C_{3}^{\square}$. Indeed,
in both cases,
the copy will appear on the triangles $v_2v_4x$
and $v_3v_5y$, with the edges $v_2v_3,v_4v_5,xy$.
Thus, we get $xy\notin E(G)$.
\end{proof}

\begin{lemma} \label{lem-H1}
If $G$ contains  $H_1$  as a subgraph,
then $e(G[V(P^{2}_{6})])\leq 10$,
with equality if and only if $v_3v_6$ is an edge in $G$.
\end{lemma}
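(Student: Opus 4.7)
The plan is a case analysis on which non-edges of $P_6^2$ can coexist with the subgraph $H_1$ already guaranteed to lie in $G$. Since $E(P_6^2)$ already contributes $9$ edges on $V(P_6^2)=\{v_1,\ldots,v_6\}$, it suffices to show that among the six missing pairs
\[
v_1v_4,\ v_1v_5,\ v_1v_6,\ v_2v_5,\ v_2v_6,\ v_3v_6,
\]
at most one may appear in $E(G)$, and when it does, it must be $v_3v_6$; together with the $9$ edges of $P_6^2$ this yields the bound $e(G[V(P_6^2)])\le 10$ and the stated equality condition.

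For each $e\in\{v_1v_4,\ v_1v_5,\ v_1v_6,\ v_2v_5,\ v_2v_6\}$, I would exhibit two vertex-disjoint triangles $T_1,T_2\subseteq H_1+e$ together with a perfect matching between their vertex sets, thereby producing a copy of $C_3\square P_2$ and contradicting $C_3\square P_2\not\subseteq G$. Since $x$ is joined to $\{v_1,v_2,v_3,v_4\}$ and $y$ to $\{v_1,v_3,v_5,v_6\}$, the vertices $x$ and $y$ supply abundant triangle candidates. For instance, for $e=v_1v_4$ one may take $T_1=xv_1v_4$ and $T_2=yv_3v_5$ with matching $\{xv_3,\ v_1y,\ v_4v_5\}$, while for $e=v_1v_6$ one may take $T_1=yv_1v_6$ and $T_2=v_3v_4v_5$ with matching $\{v_1v_3,\ v_4v_6,\ v_5y\}$. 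The remaining three cases admit analogous short witnesses once the added edge identifies a convenient triangle in $H_1+e$.

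Next I would verify that $H_1+v_3v_6$ is itself $C_3\square P_2$-free, so that adjoining $v_3v_6$ creates no new obstruction in $G$. This amounts to enumerating the finitely many triangles of $H_1+v_3v_6$ and checking that no two vertex-disjoint triangles admit a perfect matching between their vertex sets in the ambient graph. The search is sharply pruned by two observations: $xy\notin E(G)$ by Lemma \ref{claim1}, and $y$ has no neighbor in $\{v_2,v_4,x\}$ while $x$ has no neighbor in $\{v_5,v_6,y\}$. Consequently, whenever two disjoint candidate triangles separately contain $x$ and $y$, the required bijective matching forces some extreme vertex (typically $v_2$, $v_6$, or an endpoint of the underlying $P_6$) to have no partner across; and triangle pairs avoiding $x$ or $y$ are too few to cover the six vertices of $V(P_6^2)$.

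Combining both steps gives $e(G[V(P_6^2)])\le 9+1=10$, with equality exactly when $v_3v_6\in E(G)$, as claimed. The main obstacle is the completeness of the second step: one must carefully traverse the short list of triangles in $H_1+v_3v_6$ and confirm that no disjoint triangle pair admits a perfect matching. The five forbidden-edge cases are routine once the witnessing configurations above are written down.
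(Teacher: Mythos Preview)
Your proposal is correct and takes essentially the same route as the paper: both argue by case analysis on the six non-edges of $P_6^2$, exhibiting for each of $v_1v_4,\ v_1v_5,\ v_1v_6,\ v_2v_5,\ v_2v_6$ a pair of vertex-disjoint triangles in $H_1+e$ together with a perfect matching between them. Your explicit witnesses differ slightly from the paper's (e.g., for $v_1v_6$ the paper uses $T_1=v_1v_3x$ and $T_2=v_4v_5v_6$ rather than your $T_1=yv_1v_6$), but both choices are valid.

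One remark on your second step: verifying that $H_1+v_3v_6$ is itself $C_3\square P_2$-free is not carried out in the paper, and it is not needed for the lemma as stated. The standing hypothesis is that $G$ is already $C_3\square P_2$-free and contains $P_6^2$; once the five forbidden non-edges are excluded, the bound $e(G[V(P_6^2)])\le 10$ follows, and if in addition $v_3v_6\in E(G)$ then trivially $e(G[V(P_6^2)])\ge 9+1=10$, so equality holds automatically. Your extra check would establish that the equality case is non-vacuous, which is pleasant to know (and is implicitly confirmed later in the paper via the graph $H_3$), but it is not part of what the lemma asserts.
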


\begin{proof}
Observe that $e(P_6^2)=9$.
In other words, we need to show that
one can join at most one non-edge of $P_6^2$
to avoid the copy of $C_3^{ \square}$ in $G$.
Note that
\begin{equation} \label{non-edge}
 E(K_6) \setminus E(P_6^2)= \{v_1v_4, v_1v_5, v_1v_6, v_2v_5,  v_2v_6, v_3v_6\}.
 \end{equation}
If $v_1v_4\in E(G[V(P^{2}_{6})])$, then
$G$ contains two triangles $v_1v_4x$ and $v_3v_5y$,
which together with the matching $\{v_1y, v_4v_5,xv_3\}$,
form a copy of $C_{3}^{\square}$, a contradiction.
If $v_1v_5\in E(G[V(P^{2}_{6})])$, then
the triangles $v_1v_2x$ and $v_3v_4v_5$, with
the matching $\{v_1v_5, v_2v_3, xv_4\}$, form a copy of
$C_{3}^{\square}$, a contradiction.
If $v_1v_6\in E(G[V(P^{2}_{6})])$, then
there are two triangles $v_1v_3x$ and $v_4v_5v_6$,
combining with the matching $\{v_1v_6, v_3v_5, xv_4\}$,
we find a copy of $C_{3}^{\square}$, a contradiction.
In fact, whenever $v_1v_6\in E(G)$,
one can find a copy of $C_{3}^{\square}$ without the use of
the vertex $x$, which was showed in the second paragraph of
the proof of Lemma \ref{n=6}.
If $v_2v_5\in E(G[V(P^{2}_{6})])$,
then triangles $v_1v_2x$ and $v_3v_4v_5$,
with the matching $\{v_1v_3, v_2v_5, xv_4\}$, form
 a copy of $C_{3}^{\square}$, a contradiction.
 If $v_2v_6\in E(G[V(P^{2}_{6})])$,
then $G$ contains a copy of $C_{3}^{\square}$, which consists of
two triangles $v_2v_3x$ and $v_4v_5v_6$,
with the matching $\{v_2v_6, v_3v_5, xv_4\}$.
Thus, the only possible case is $v_3v_6 \in E(G[V(P^{2}_{6})])$,
and then it follows that $e(G[V(P^{2}_{6})])\le 10$.
\end{proof}

Similarly, we can prove the following lemma.

\medskip

\begin{lemma} \label{lem-H2}
If $G$ contains $H_2$ as a subgraph,
then $e(G[V(P^{2}_{6})])\leq 11$,
and equality holds if and only if both $v_1v_4$ and $v_3v_6$
are edges in $G$.
\end{lemma}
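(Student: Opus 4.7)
The plan is to mirror the strategy of Lemma \ref{lem-H1}. Since $P_6^2$ occupies $9$ of the $\binom{6}{2}=15$ possible edges among $\{v_1,\dots,v_6\}$, the six non-edges of $P_6^2$ are
\[
\{v_1v_4,\ v_1v_5,\ v_1v_6,\ v_2v_5,\ v_2v_6,\ v_3v_6\}.
\]
The upper bound $e(G[V(P_6^2)])\le 11$ together with the equality characterization is equivalent to the statement that, of these six non-edges, at most the two edges $v_1v_4$ and $v_3v_6$ can appear in $G$ without creating a copy of $C_3\square P_2$ in $H_2$ together with the added edges, and that equality requires both to be present.

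The crux is to rule out each of the four \emph{bad} non-edges $v_1v_5,\ v_1v_6,\ v_2v_5,\ v_2v_6$ by exhibiting an explicit $C_3\square P_2$ inside $H_2$ plus that single edge. All four cases succumb to the same template: the first triangle is $v_iv_3x$ for some $i\in\{1,2\}$, the second triangle is $v_4v_jy$ for some $j\in\{5,6\}$, and the matching between them is $\{v_iv_j,\ v_3y,\ xv_4\}$. Here $v_iv_3$ and $v_4v_j$ are edges of $P_6^2$ (since $|i-3|\le 2$ and $|4-j|\le 2$); $v_ix,v_3x,v_4y,v_jy,v_3y,xv_4$ are all edges of $H_2$ by its definition; and $v_iv_j$ is the newly added edge. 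The two triangles are vertex-disjoint, so each choice $(i,j)\in\{1,2\}\times\{5,6\}$ produces the desired $C_3\square P_2$, settling all four bad cases uniformly.

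Consequently, the only non-edges that can coexist with $H_2$ lie in $\{v_1v_4,v_3v_6\}$, giving $e(G[V(P_6^2)])\le 9+2=11$, and equality forces both $v_1v_4,v_3v_6\in E(G)$. For the reverse direction of the equality characterization, I would verify that the graph $H_2\cup\{v_1v_4,v_3v_6\}$ itself contains no $C_3\square P_2$. The plan is to enumerate all pairs of vertex-disjoint triangles in this $8$-vertex graph, exploit the natural symmetry $v_i\leftrightarrow v_{7-i}$, $x\leftrightarrow y$ to halve the workload, and show in each case that the required perfect matching between the two triangles fails because some vertex of one triangle (typically $v_5$ or $v_6$, or $x$ when paired against $\{v_5,v_6,y\}$) has no neighbor in the other triangle.

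The main obstacle is not conceptual but organizational: making the reverse-direction enumeration of triangle pairs provably exhaustive, so that the equality characterization is fully justified. The uniform template used for the four bad non-edges keeps the forward direction clean, but the $C_3\square P_2$-freeness of $H_2\cup\{v_1v_4,v_3v_6\}$ ultimately relies on a finite, symmetry-reduced case check.
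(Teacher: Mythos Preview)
Your proposal is correct and follows essentially the same approach as the paper: rule out each of the four ``bad'' non-edges $v_1v_5,v_1v_6,v_2v_5,v_2v_6$ by exhibiting an explicit $C_3\square P_2$, leaving only $v_1v_4$ and $v_3v_6$ as possible additions. Your uniform template $(v_iv_3x)\text{--}(v_4v_jy)$ with matching $\{v_iv_j,v_3y,xv_4\}$ is in fact cleaner than the paper's case-by-case treatment (which uses a different triangle pair for $v_1v_5$), and your explicit plan to verify that $H_2\cup\{v_1v_4,v_3v_6\}$ is $C_3\square P_2$-free goes beyond the paper's proof, which simply asserts this possibility and later relies on it implicitly when exhibiting $G_3$ as an extremal graph.
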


\begin{proof}
We need to check each non-edge of $E(G[V(P^{2}_{6})])$ stated
 in (\ref{non-edge}).
If $v_1v_5\in E(G[V(P^{2}_{6})])$, then
the triangles $v_1v_2x$ and $v_3v_4v_5$, with
the matching $\{v_1v_5, v_2v_4, xv_3\}$, form a copy of
$C_{3}^{\square}$, a contradiction.
If $v_1v_6\in E(G[V(P^{2}_{6})])$, then
the same argument in the proof of Lemma \ref{lem-H1} can yield a copy of  $C_{3}^{\square}$, a contradiction.
In addition, combining the triangles $v_1v_3x, v_4v_6y$ with
 the matching $\{v_1v_6, v_3y, xv_4\}$,
we find another copy of $C_{3}^{\square}$.
If $v_2v_5\in E(G[V(P^{2}_{6})])$, then
the proof of Lemma \ref{lem-H1} yields a copy of  $C_{3}^{\square}$, a contradiction. Additionally,
there is another copy of $C_{3}^{\square}$ on two triangles $v_2v_3x$ and $v_4v_5y$,
joined by the matching $\{v_2v_5, v_3y, xv_4\}$.
Finally, if $v_2v_6\in E(G[V(P^{2}_{6})])$, then
there is a copy of $C_{3}^{\square}$ in $G$, which
consists of triangles $v_2v_3x, v_4v_6y$
and the matching $\{v_2v_6, v_3y, xv_4\}$.
Therefore, it is possible that
$v_1v_4, v_3v_6 \in E(G[V(P^{2}_{6})])$, and only in such a case, we have $e(G[V(P^{2}_{6})]) = 11$.
\end{proof}

\section{Proofs of Theorems \ref{C3} and \ref{EC3}}

\label{sec5}

In this section, we shall prove Theorems \ref{C3} and \ref{EC3} as follows.

\begin{proof}[{\bf Proof of Theorem \ref{C3}}]
For notational convenience, we denote
 $$
f (n):=
\begin{cases}
\left\lfloor\frac{n^{2}}{4}\right\rfloor+\left\lfloor\frac{n-1}{2}\right\rfloor,&~\text{if}~n\equiv 1,2,3~(\bmod~6), \\[3mm]
\left\lfloor\frac{n^{2}}{4}\right\rfloor+\left\lceil\frac{n}{2}\right\rceil,&~\text{otherwise}.
\end{cases}
$$
First of all,
when $n\equiv0,4,5~(\bmod~6)$,
we have $\ex(n, C_3^{ \square} )\geq
\lfloor\frac{n^{2}}{4}\rfloor+ \lceil\frac{n}{2} \rceil$, which follows from the constructions of $G_1$, $H^{\frac{n}{2}}_{n}$, $H^{\frac{n}{2}+1}_{n}$ and $H^{\lceil\frac{n}{2}\rceil}_{n}$, respectively.
In addition, when $n\equiv1,2,3~(\bmod~6)$,
the lower bound $\ex(n, C_3^{ \square})\geq
 \lfloor\frac{n^{2}}{4} \rfloor+ \lfloor\frac{n-1}{2} \rfloor$ follows from the definitions of $G_2$, $G_3$ and $F^{\lceil\frac{n}{2}\rceil,j}_{n}$.
Thus, it remains to show the following inequality
\begin{eqnarray}\label{3}
\ex(n,C_3^{ \square} )\leq f (n).
\end{eqnarray}
Our proof of (\ref{3}) is by induction on $n$.
Let $G$ be an $n$-vertex $C_3^{ \square} $-free graph.
We need to show  $e(G)\le f (n)$.
Since our inductive step will go from $n-6$ to $n$, we have to prove the base case in each residue class $\bmod~6$.

\begin{center}
{\bf  \S~Base Case~\S}
\end{center}

When $1\le n\leq4$,
it is clear that $K_n$ is the $C_{3}^{\square}$-free graph with the maximum number of edges, and
one can verify that $e(K_{n})=f (n)$.
Moreover,  when $n=6$, by Theorem \ref{P6}, we have
$e(G)\leq 12$ whenever $G$ is $P_6^2$-free;
by Lemma \ref{n=6},  we get $e(G)\leq 12$ whenever $G$ has a copy of $P_6^2$.
Then we always have $e(G) \le 12=f (6)$, as needed.
We remark here that
when $n=5$, we have
\begin{equation}  \label{eq-n=5}
\mathrm{ex}(5,C_3^{ \square} ) =e(K_{5})=10> 9 =f (5).
\end{equation}
This is a counterexample.
 Thus we require $n\neq 5$.

 Next, we are going to prove the statement for $n=11$.
First of all, if $P^{2}_{6}\nsubseteq G$, then by Theorem \ref{P6},
 the results hold immediately. In what follows, we assume that
$P^{2}_{6}\subseteq G$.
Under this constraint, our aim is to show that
\begin{equation} \label{eq-n=11}
  e(G)< f (11)=36.
  \end{equation}

Firstly, if $V(G)\setminus V(P_6^2)$ forms a copy of $K_5$ in $G$, then each $v_i\in V(P_6^2)$ is adjacent to at most 2 vertices of $K_5$, otherwise, there is a copy of $C_{3}^{\square} $ in $G$.
By Lemma \ref{n=6}, we get $e(G[V(P^{2}_{6})])\leq12$. Therefore, we have $e(G)\leq12+12+10=34<36=f (11)$.

Secondly, if
 $V(G)\setminus V(P_6^2)$ forms a copy of $K_5^-$,
 then we assume that $V(K_5^-)=\{u_1,u_2,u_3,u_4,u_5\}$ and $u_4u_5\notin E(K_5^-)$.
 The following two facts can be checked.
The first fact asserts that for each $i\in \{1,\ldots,6\}$, the vertex $v_i$ is adjacent to at most 3 vertices of $K_5^-$, with equality holding if and only if $N_{K_5^-}(v_i)=\{u_1,u_2,u_3\}$, otherwise, it leads to $C_{3}^{\square} \subseteq G$.
The second fact states that there are at most $7$ edges between $\{v_1,v_{2},v_{3}\}$ and $V(K_5^-)$, otherwise, one can get $C_{3}^{\square} \subseteq G$. The similar result holds
between $\{v_4,v_5,v_6\}$ and $K_5^-$.
 Therefore, the number of edges between $V(P^{2}_{6})$ and $V(K_5^-)$ is at most $14$.
 Recall in Lemma \ref{n=6} that $e(G[V(P^{2}_{6})])\leq12$. Therefore, we get $e(G)\leq12+14+9=35<f  (11)$.

Finally, we will consider the remaining case
\begin{equation*}  \label{eq-le8}
e(V(G)\setminus V(P^{2}_{6})) \leq 8.
\end{equation*}
By Lemma \ref{claim1},
we proceed in
 the following three possible cases.

\textbf{Case 1.}\ \ There are exactly 2 vertices of $V(G)
\setminus V(P^{2}_{6})$, say $x$ and $y$, each of them is adjacent to 4 vertices of $V(P^{2}_{6})$. By  Lemma \ref{claim1}, we obtain that either $H_1 \subseteq G$
or $H_2\subseteq G$.

{\bf Subcase 1.1.} If $H_1 \subseteq G$, then by Lemma \ref{lem-H1},
we know that $e(G[V(P^{2}_{6})])\leq 10$.
Consequently, we have
$e(G)\leq
10+(4+4+3+3+3)+8=35<f  (11)$, as desired.

{\bf Subcase 1.2.} If $H_2 \subseteq G$, then by Lemma \ref{lem-H2},
we have $e(G[V(P^{2}_{6})])\leq 11$,
equality holds if and only if both $v_1v_4$ and $v_3v_6$
are edges of $G$.
If there is a vertex of
$V(G)\setminus (V(P^{2}_{6}) \cup \{x,y\})$ adjacent to at most 2 vertices of $V(P^{2}_{6})$, then $e(G)\leq
11+(4+4+2+3+3)+8=35<f (11)$, as desired.
 If every vertex of $V(G)\setminus (V(P^{2}_{6}) \cup \{x,y\})$ is adjacent to 3 vertices of $V(P^{2}_{6})$, then
 it can be seen further that $e(G[V(P^{2}_{6})])\leq 10$.
 Otherwise, if $e(G[V(P^{2}_{6})]) =11$,
 then Lemma \ref{lem-H2} implies $v_1v_4, v_3v_6\in E(G)$.
 Any vertex of $V(G)\setminus (V(P^{2}_{6}) \cup \{x,y\})$
 with degree $3$ on $V(P_6^2)$ can lead to
 a copy of $C_{3}^{\square} $, a contradiction.
 Therefore,
 we have $e(G)\leq 10+(4+4+3\times 3)+8=35<f(11)$.

\textbf{Case 2.}
 There is exactly one vertex  $v\in V(G) \setminus V(P^{2}_{6})$,
 which is adjacent to 4 vertices of $V(P^{2}_{6})$.
By Claim \ref{cla-4},
 $v$ is one of the four types in Figure \ref{4case}.
In either case, by Lemma \ref{n=6},
 we can get $G[V(P^{2}_{6})] \neq G_1$
 and then $e(G[V(P^{2}_{6})])\leq11$. Otherwise,
 if $G[V(P^{2}_{6})] = G_1$, then
 $v_2v_5, v_2v_6 $ and $v_3v_6$ are edges of $G$.
Observe that the vertex set $\{v_2,v_3,\ldots ,v_6\}$
forms a copy of the complete graph $K_5$ in $G$.
Since $v$ has $4$ neighbors on $P_6^2$,
we know that there are at least $3$ neighbors of $v$
which are contained in the set $\{v_2,v_3,\ldots ,v_6\}$.
A trivial fact states that if $v$ has at least $3$ neighbors on $K_5$,
then $\{v\}\cup V(K_5)$ contains a copy of $C_{3}^{\square}$.
Thus $G$ has a copy of $C_{3}^{\square} $.
 Therefore,
 we have $e(G)\leq11+(4+3\times 4)+8=35<f (11)$, as needed.

\textbf{Case 3.}\ \ All vertices of $V(G) \setminus V(P^{2}_{6})$ are adjacent to at most 3 vertices of $V(P^{2}_{6})$.
By Lemma \ref{n=6} again, it follows that  $e(G)\leq12+15+8=35<f (11)$.
In conclusion, we complete the proof of (\ref{eq-n=11}) in the base case $n=11$.

\begin{center}
{\bf \S~Inductive Step~\S}
\end{center}

In what follows, we will show the inductive step.
Suppose that (\ref{3}) holds for all integers $\ell\leq n-1$ and $\ell\neq 5$.
Now we consider the case $n\ge 7$.
Assume that $G$ is a $C_{3}^{\square}$-free graph on
$n$ vertices with maximum number of edges.
We need to show that $e(G) \le f (n)$.
Our inductive step will start from $n-6$ to $n$.
Upon on computations, we can
obtain that
\begin{equation} \label{eqn-6}
f (n) =
f (n-6) + 3n-6.
\end{equation}
First of all,  if $P^{2}_{6}\nsubseteq G$,
then by Theorem \ref{P6}, we get $e(G)\leq f (n)$, as desired.
{\it In what follows,
we assume that $P^{2}_{6}\subseteq G$,
and then we show a $\bm{stronger}$ result}
\begin{equation}  \label{eq-strong}
e(G) \le f (n)-1,
\end{equation}
{\it except for $n=7,8$.}
By Lemma \ref{claim1},
there are at most $2$ vertices of $V(G) \setminus V(P^{2}_{6})$ adjacent to
exactly $4$ vertices of $V(P^{2}_{6})$.
In the sequel, we shall present the proof in three cases.

{\bf Case A.}
If there are exactly $2$ vertices of $V(G) \setminus V(P^{2}_{6})$,
say $x$ and $y$, adjacent to 4 vertices of $P^{2}_{6}$, then
by Lemma \ref{claim1}, we get either $H_1\subseteq G$ or $H_2\subseteq G$.

{\bf Subcase A.1.} Assume that $H_1\subseteq G$.
Then Lemma \ref{lem-H1} gives $e(G[V(P^{2}_{6})])\leq 10$.

  If $e(G[V(P^{2}_{6})])\leq 9$, then
$e(G)\leq9+8+3(n-8)+\ex(n-6,C_{3}^{\square} )
\le 3n-7+f (n-6)
<f (n)$, as needed.
In what follows, we assume that
 $e(G[V(P^{2}_{6})])=10$. Then Lemma \ref{lem-H1} implies that $G[V(P_6^2)] $
 consists of $P_6^2 $ by adding the edge $ v_3v_6$.

 If there exists a vertex of $V(G)\setminus (V(P^{2}_{6})\cup \{x,y\})$ adjacent to at most 2 vertices of $P^{2}_{6}$, then $e(G)\leq10+8+2+3(n-9)+\ex(n-6,C_{3}^{\square} )\le 3n-7+f (n-6)<
 f (n)$,
 where the last inequality holds by (\ref{eqn-6}).

If all vertices of
$V(G) \setminus (V(P^{2}_{6})\cup \{x,y\})$ are adjacent to 3 vertices of $P^{2}_{6}$,
then
 it is easy to check that they have the same neighbors on $P^{2}_{6}$; see $H_3$ in Figure \ref{sameneighbor}.
  Next, we show that $V(G) \setminus V(P_6^2)$
  is an independent set in $G$.
  Indeed, by Lemma \ref{claim1}, we know that
  $xy\notin E(G)$. In Figure \ref{sameneighbor},
  if $xz$ is an edge of $G$, then
  combining with triangles $xv_3z, v_4v_5v_6$
  with edges $v_3v_6, xv_4, zv_5$,
  we see that $G$ has a copy of $C_{3}^{\square}$,
  a contradiction.
  If $yz$ is an edge of $G$, then
  the triangles $v_3yz, v_4v_5v_6$ and edges
  $v_3v_4, yv_5, zv_6$ form a copy of $C_{3}^{\square}$,
  a contradiction. If $zw$ is an edge of $G$, then
  the copy of $C_{3}^{\square}$ will appear on the
  triangles $v_3zw, v_4v_5v_6$ and edges $v_3v_4,
  zv_5, wv_6$, a contradiction.
  To sum up, we get that $G= H_3$;
  see Figure \ref{sameneighbor}.
  Consequently, we have $e(G)= 10 + 8 + 3(n-8)
  <  f (n) $, as desired.

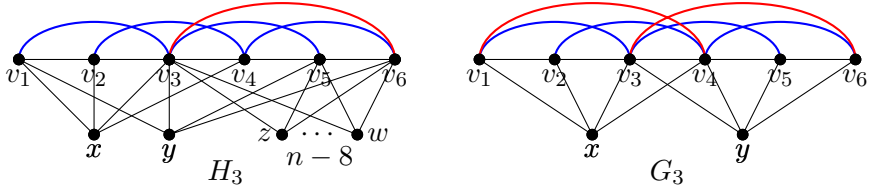
\begin{figure}[H]
\centering
\begin{tikzpicture}

\draw[blue,thick]
 (2,0) arc (0: 180: 1 and 0.5)
  (3,0) arc (0: 180: 1 and 0.5)
   (4,0) arc (0: 180: 1 and 0.5)
    (5,0) arc (0: 180: 1 and 0.5);
 \draw[red,thick]   (5,0) arc (0: 180: 1.5 and 0.75);

\filldraw (0,0) circle (2pt) node[below]{$v_1$} -- (1,0) circle (2pt) node[below]{$v_2$} -- (2,0) circle (2pt) node[below]{$v_3$} -- (3,0) circle (2pt) node[below]{$v_4$}--(4,0) circle (2pt)node[below]{$v_5$} -- (5,0) circle (2pt) node[below]{$v_6$};

\filldraw (0,0) circle (2pt) -- (1,-1) circle (2pt) node[below]{$x$} -- (1,0) circle (2pt);

\filldraw (2,0) circle (2pt) -- (1,-1) circle (2pt) node[below]{$x$} -- (3,0) circle (2pt);

\filldraw (0,0) circle (2pt) -- (2,-1) circle (2pt) node[below]{$y$} -- (2,0) circle (2pt);

\filldraw (4,0) circle (2pt) -- (2,-1) circle (2pt) node[below]{$y$} -- (5,0) circle (2pt);

\filldraw (3.5,-1) circle (2pt) -- (2,0) circle (2pt);

\filldraw (4,0) circle (2pt) -- (3.5,-1) circle (2pt) node[left]{$z$}-- (5,0) circle (2pt);

\filldraw (4.5,-1) circle (2pt) -- (2,0) circle (2pt);

\filldraw (4,0) circle (2pt) -- (4.5,-1) circle (2pt) node[right]{$w$}-- (5,0) circle (2pt);

\draw (2.75,-1.5) node{$H_3$}
(4,-1) node{$\cdots$} (4,-1.3) node{$n-8$};
\end{tikzpicture}
 \quad
\begin{tikzpicture}
\draw[blue,thick]
 (2,0) arc (0: 180: 1 and 0.5)
  (3,0) arc (0: 180: 1 and 0.5)
   (4,0) arc (0: 180: 1 and 0.5)
    (5,0) arc (0: 180: 1 and 0.5);
 \draw[red,thick]
  (3,0) arc (0: 180: 1.5 and 0.75)
  (5,0) arc (0: 180: 1.5 and 0.75);

\filldraw (0,0) circle (2pt) node[below]{$v_1$} -- (1,0) circle (2pt) node[below]{$v_2$} -- (2,0) circle (2pt) node[below]{$v_3$} -- (3,0) circle (2pt) node[below]{$v_4$}--(4,0) circle (2pt)node[below]{$v_5$} -- (5,0) circle (2pt) node[below]{$v_6$};

\filldraw (0,0) circle (2pt) -- (1.5,-1) circle (2pt) node[below]{$x$} -- (1,0) circle (2pt);

\filldraw (2,0) circle (2pt) -- (1.5,-1) circle (2pt) node[below]{$x$} -- (3,0) circle (2pt);

\filldraw (3,0) circle (2pt) -- (3.5,-1) circle (2pt) node[below]{$y$} -- (2,0) circle (2pt);

\filldraw (4,0) circle (2pt) -- (3.5,-1) circle (2pt) node[below]{$y$} -- (5,0) circle (2pt);

\draw (2.5,-1.5) node{$G_3$} ;
\end{tikzpicture}
\caption{The graphs $H_3$ and $G_3$, respectively.}
\label{sameneighbor}
\end{figure}

{\bf Subcase A.2.} Assume that $H_2\subseteq G$.
By Lemma \ref{lem-H2}, we have $e(G[V(P^{2}_{6})]) \le 11$.

 If $e(G[V(P^{2}_{6})])\leq9$, then $e(G)\leq9+8+3(n-8)+\ex(n-6,C_{3}^{\square} )\leq3n-7+f (n-6)<f (n)$, where the last inequality
 follows from (\ref{eqn-6}).

If $10\le e(G[V(P^{2}_{6})])\le 11$, then
by the proof of Lemma \ref{lem-H2}, we know that
$v_1v_4\in E(G)$ or $v_3v_6\in E(G)$.
We need to
prove the following claim.

 \begin{claim}\label{claimH2}
If $H_2$ is a subgraph of $ G$ and  $z\in
V(G) \backslash (V(P_6^2)\cup \{x,y\})$
is a vertex which is adjacent to three vertices of $P_6^2$, then $zx\notin E(G)$ and $zy\notin E(G)$.
\end{claim}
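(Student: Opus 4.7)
The plan is to argue by contradiction, in the case-analytic style already used in this section. Suppose $zx\in E(G)$; by the symmetry of $H_2$ under the relabelling $v_i\leftrightarrow v_{7-i}$, which preserves $P^{2}_{6}$ and exchanges the neighbourhoods of $x$ and $y$, the corresponding statement for $zy$ reduces to the same argument, so I treat only $zx$. Write $S:=N_{P^{2}_{6}}(z)$, so $|S|=3$; the aim is to exhibit a copy of $C_{3}\square P_{2}$ in $G$ and thereby reach a contradiction.

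The key structural observation is that the complement of $P^{2}_{6}$ restricted to $\{v_1,\ldots,v_6\}$, whose edges are $v_1v_4,\,v_1v_5,\,v_1v_6,\,v_2v_5,\,v_2v_6,\,v_3v_6$, is triangle-free; equivalently, every $3$-subset $S$ of $\{v_1,\ldots,v_6\}$ contains at least one edge of $P^{2}_{6}$. Fix such an edge $v_iv_j$ with $v_i,v_j\in S$, so that $T_1:=zv_iv_j$ is a triangle of $G$. It then suffices to find a triangle $T_2$ of $G$, disjoint from $T_1$, together with a perfect matching between $T_1$ and $T_2$; the edges available for this matching are $xz$ (by assumption), the edges of $P^{2}_{6}$, and the $H_2$-edges $xv_k$ for $k\in\{1,2,3,4\}$ and $yv_k$ for $k\in\{3,4,5,6\}$. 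As a template, for $S=\{v_1,v_3,v_5\}$ take $T_2:=xv_2v_4$ with matching $\{xz,\,v_2v_3,\,v_4v_5\}$; for $S=\{v_2,v_4,v_6\}$ instead choose $T_1:=xzv_4$, $T_2:=v_1v_2v_3$, and matching $\{zv_2,\,xv_1,\,v_3v_4\}$.

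The main obstacle is nothing more than combinatorial bookkeeping: one must verify, for each of the $\binom{6}{3}=20$ possible choices of $S$ (substantially collapsed by the $v_i\leftrightarrow v_{7-i}$ symmetry), that a suitable $T_2$ and matching exist. The natural taxonomy is by the value of $|S\cap\{v_1,v_2,v_3,v_4\}|\in\{1,2,3\}$: when this quantity is large enough, $T_2$ is picked inside $\{x\}\cup\{v_1,v_2,v_3,v_4\}$ by the first recipe above; otherwise $T_2$ is picked inside $P^{2}_{6}$ itself (with $y$ supplying a triangle $yv_pv_{p+1}$ in the residual cases). A pleasant by-product of the verification is that none of the cases will require the auxiliary edges $v_1v_4$ or $v_3v_6$ guaranteed by the ambient Subcase~A.2, so Claim~\ref{claimH2} actually holds whenever $H_2\subseteq G$, not merely when $e(G[V(P^{2}_{6})])\ge 10$.
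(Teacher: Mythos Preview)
Your approach is correct and is precisely the kind of direct case analysis the paper has in mind; the paper's own proof of this claim is the single sentence ``The proof is straightforward,'' so you are supplying what the authors left to the reader. Your structural observation that the complement of $P_6^2$ on $\{v_1,\ldots,v_6\}$ is bipartite (hence triangle-free), guaranteeing an edge inside every $3$-set $S$, is a clean way to organise the check.

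One minor remark: the symmetry $v_i\leftrightarrow v_{7-i}$ swaps the roles of $x$ and $y$, so it correctly reduces the case $zy\in E(G)$ to the case $zx\in E(G)$, but it does \emph{not} further collapse the twenty choices of $S$ once you have fixed the assumption $zx\in E(G)$; applying the symmetry there would land you in the $zy$ case with a reflected $S$. So the bookkeeping does require going through essentially all $S$, though, as your sample cases and the taxonomy by $|S\cap\{v_1,v_2,v_3,v_4\}|$ suggest, each one is a one-line verification.
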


\begin{proof}[Proof of Claim]
The proof is straightforward.
\end{proof}

If $V(G)\setminus (V(P^{2}_{6})\cup \{x,y\})$ has a vertex, say $z$, which is adjacent to at most $2$ vertices of $P^{2}_{6}$, then $e(G)\leq11+8+2+3(n-9)+\ex(n-6,C_{3}^{\square} ) \leq 3n-6+f(n-6)
= f (n)$.
Next, we  show further that
$ e(G)< f (n)$.
Otherwise, if the equality $e(G)= f (n)$ holds,
then the above inequalities must be equalities.
More precisely, we get $e(G[V(P^{2}_{6})]) =11$,
each vertex outside of $V(P_6^2)\cup \{x,y,z\}$
has exactly $3$ neighbors on $P_6^2$,
and the induced subgraph $G[V(G)\setminus V(P_6^2)]$ is an $(n-6)$-vertex
{\it extremal graph} for $C_3^{ \square}$.
By Claim \ref{claimH2}, we see that
each vertex outside of $V(P_6^2)\cup \{x,y,z\}$
can not be adjacent to any vertex of $\{x,y\}$.
Consequently,
in the subgraph $G[V(G)\setminus V(P_6^2)]$,
the vertex $x$ (and $y$) has at most one neighbor,
and the only possible neighbor is $z$.
However, by induction,
we can see that the extremal graphs of order at least 3 for $C_3^{\square}$
do not contain any vertex with degree at most one.
This is a contradiction.
In other words, the subgraph $G[V(G)\setminus V(P_6^2)]$
can not be an extremal graph for $C_{3}^{\square}$.
Then we get $e(G)<f (n)$, as desired in (\ref{eq-strong}).

If all vertices of $V(G)\setminus (V(P^{2}_{6})\cup\{x,y\})$ are adjacent to 3 vertices of $P^{2}_{6}$,
 then by Claim \ref{claimH2}, there is no edge between
 $\{x,y\}$ and $V(G)\setminus (V(P^{2}_{6})\cup\{x,y\})$.
In other words, $x,y$ are two isolated vertices in the induced subgraph
$G[V(G) \setminus V( P^{2}_{6})]$. Therefore, $e(G)\leq 11+8+3(n-8)+ \ex(n-8,C_{3}^{\square} ) \leq 3n-5+f (n-8)<f (n)$,
where the second inequality follows by induction whenever
$n\neq 13$, and the last inequality holds for $n\neq 8$ by
a direct computation.

In particular,
for $n=8$, we get  $V(G)=V(P_6^2) \cup \{x,y\}$
and
\begin{equation} \label{eq-G3}
  e(G)\le 11+8 =19 =f (8) ,
    \end{equation}
where the equality holds if and only if $G$ is obtained from $H_2$ by
adding two edges $v_1v_4$ and $v_3v_6$; see $G_3$ in Figure \ref{sameneighbor}. In the equality case, we get
$G=G_3$ and   $e(G_3) =19 = f (8)$.
In addition, for $n=13$, recall in (\ref{eq-n=5})
that $\ex(5,C_{3}^{\square} ) =e(K_5)=10$, and then
$e(G) \le  11+8 + 3\times 5 +  \ex(5,C_{3}^{\square} )
=44 < 48= f (13) $, as needed.

{\bf Case B.}
If there is exactly one vertex $v\in V(G)\setminus V(P^{2}_{6})$ adjacent to 4 vertices of $P^{2}_{6}$, then
by Claim \ref{cla-4}, we know that
$v$ is one of the four types in Figure \ref{4case}.

Firstly, we claim that $e(G[V(P^{2}_{6})])\leq 11$.
Otherwise, if $e(G[V(P^{2}_{6})])= 12$, then  Lemma \ref{n=6} implies that $v_2v_5,v_2v_6$ and $v_3v_6$
are edges of $G$. Note that the vertex set $\{v_2,v_3,\ldots ,v_6\}$
forms a copy of $K_5$ in $G$.
Since $v$ has $4$ neighbors on $P_6^2$,
it follows that $v$ has at least $3$ neighbors on this copy of $K_5$,
which leads to a copy of $C_{3}^{\square}$ in $ G$,
a contradiction.

{\bf Subcase B.1.}
If $v$ is of Type 2, Type 3 or Type 4 in Figure \ref{4case},
then we can show further that $e(G[V(P^{2}_{6})])\leq10$.
More precisely, if $v$ is of Type 2, then
adding any non-edge of $P_6^2$ to $G$ can make
a copy of $C_{3}^{\square}$, and so
$e(G[V(P^{2}_{6})]) = 9$;
if $v$ is of Type 3, then $v_2v_5$
is  the only possible edge of $G$, and hence
$e(G[V(P^{2}_{6})])\leq 10$;
if $v$ is of Type 4, then $v_3v_6$ is the only possible edge,
and thus $e(G[V(P^{2}_{6})])\leq 10$.
Therefore, we obtain $e(G)\leq10+4+3(n-7)+\ex(n-6,C_{3}^{\square} ) \leq 3n-7+f (n-6)<f (n)$.

 {\bf Subcase B.2.}
 Assume that $v$ is of Type 1 in Figure \ref{4case}.
In this type, we shall show that
\begin{equation} \label{eq-4-non-edges}
 v_1v_5,v_1v_6, v_2v_5,v_2v_6 \notin E(G).
 \end{equation}
Indeed, if $v_1v_5\in E(G)$, then
$G$ has a copy of $C_{3}^{\square}$, which appears on
the triangles $vv_1v_2, v_3v_4v_5$ with the edges
$v_1v_5, v_2v_3, vv_4$.
If $v_1v_6 \in E(G)$, then the triangles
$vv_1v_3, v_4v_5v_6$ and the edges $v_1v_6,v_3v_5,
vv_4$ form a copy of $C_{3}^{\square}$ in $G$.
If $v_2v_5\in E(G)$, then $G$ contains a copy of
$C_{3}^{\square}$ consisting of the triangles
$vv_1v_2, v_3v_4v_5$ and the edges $v_1v_3, v_2v_5,
vv_4$.
If $v_2v_6 \in E(G)$, then $C_{3}^{\square} \subseteq G$
by combining the triangles $vv_2v_3, v_4v_5v_6$
with the edges $v_2v_6, v_3v_5, vv_4$.
This completes the proof of (\ref{eq-4-non-edges}).
Therefore, to avoid a copy of $C_{3}^{\square}$ in  $G$,
the possible edges on $V(P_6^2)$ are
 $v_1v_4$ and $v_3v_6$; see Figure \ref{4case-f}.

\begin{figure}[H]
\centering
\begin{tikzpicture}

\draw[blue,thick]
 (2,0) arc (0: 180: 1 and 0.5)
  (3,0) arc (0: 180: 1 and 0.5)
   (4,0) arc (0: 180: 1 and 0.5)
    (5,0) arc (0: 180: 1 and 0.5);

 \draw[red, thick]
    (3,0) arc (0: 180: 1.5 and 0.75)
    (5,0) arc (0: 180: 1.5 and 0.75);

\filldraw (0,0) circle (2pt) node[below]{$v_1$} -- (1,0) circle (2pt) node[below]{$v_2$} -- (2,0) circle (2pt) node[below]{$v_3$} -- (3,0) circle (2pt) node[below]{$v_4$}--(4,0) circle (2pt)node[below]{$v_5$} -- (5,0) circle (2pt) node[below]{$v_6$};

\filldraw (0,0) circle (2pt) -- (2.5,-0.8) circle (2pt) node[below]{$v$} -- (1,0) circle (2pt);

\filldraw (2,0) circle (2pt) -- (2.5,-0.8) circle (2pt)  -- (3,0) circle (2pt);
\end{tikzpicture}
\caption{$v$ is of Type 1 and $e(G[V(P_6^2)]) =11$}
\label{4case-f}
\end{figure}
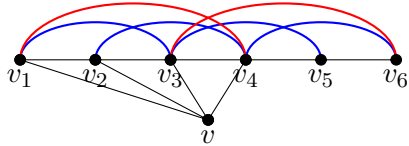

First of all, if $e(G[V(P^{2}_{6})])\leq10$,
then $e(G)\leq10+4+3(n-7)+\ex(n-6,C_{3}^{\square} )
\leq3n-7+f (n-6)<f (n)$, as required.
Next, we assume that $e(G[V(P^{2}_{6})])=11$.
By (\ref{eq-4-non-edges}), we know that
$v_1v_4, v_3v_6\in E(G)$; see Figure \ref{4case-f}.

If  there exist a vertex of
$V(G)\setminus (V(P^{2}_{6})\cup \{v\})$ adjacent to at most $2$ vertices of $P^{2}_{6}$, then
there are at most $4+2+3(n-8)$ edges between
$V(P_6^2)$ and $V(G) \setminus V(P_6^2)$. Thus, we get
$e(G)\leq 11+4+2+3(n-8)+\ex(n-6,C_{3}^{\square} ) \leq
3n-7+f (n-6)<f (n)$.

If all vertices of $V(G)\setminus (V(P^{2}_{6})\cup \{v\})$ are adjacent to $3$ vertices of $P^{2}_{6}$,
then there are exactly $4+3(n-7)$ edges between
$V(P_6^2)$ and $V(G) \setminus V(P_6^2)$.
Since each vertex $x\in V(G)\setminus (V(P^{2}_{6})\cup \{v\})$ has $3$ neighbors on $P_6^3$, we can check that
$vx \notin E(G)$.
Otherwise, one can find a copy of $C_{3}^{\square}$ in $ G$. Therefore, $v$ is an isolated vertex
in the induced subgraph $G[V(G) \setminus V(P_6^2)]$.
Then $e(G)\leq11+4+3(n-7)+\ex(n-7,C_{3}^{\square} ) \leq
3n-6+f (n-7)<f (n)$,
where the second inequality follows by induction whenever  $n\neq 12$, and the last inequality holds for $n\neq 7$
by calculations.

In particular, for $n=7$,
we have $V(G)=V(P_6^2) \cup \{v\}$ and
\begin{equation} \label{eq-G2}
e(G)\le 11+4=15= f (7),
\end{equation}
where equality holds if and only if
$G$ is a graph of Type 1 by adding edges $v_1v_4$ and
$v_3v_6$; see Figure \ref{4case-f}. In the equality case,
we see that $G= G_2$ and $e(G_2)=15=f (7)$.
Moreover, for $n=12$, recall in (\ref{eq-n=5}) that
$\mathrm{ex}(5,C_{3}^{\square})=e(K_5)=10$, then we
have $e(G)\le 11+4+3\times 5+\mathrm{ex}(5,C_{3}^{\square}) = 40 < 42 = f (12)$, as desired in (\ref{eq-strong}).

{\bf Case C.}
Assume that $V(G)\setminus V(P_6^2)$ has no vertex which has $4$ neighbors on $P_6^2$.
By Lemma \ref{n=6}, we have $e(G[V(P_6^2)])
\le 12$. We show the proof in two subcases.

{\bf Subcase C.1.}
There is a vertex of $V(G)\setminus V(P^{2}_{6})$ adjacent to at most 2 vertices of $P^{2}_{6}$. Then the number of edges between
$V(P_6^2)$ and $V(G)\setminus V(P_6^2)$
is at most $ 2+3(n-7)$ since
the other vertices of $V(G)\setminus V(P_6^2)$ adjacent to at most 3 vertices of $P^{2}_{6}$. Thus, we get  $e(G)\leq12+2+3(n-7)+\ex(n-6,C_{3}^{\square} )\leq 3n-7+f (n-6)<f (n)$.

{\bf Subcase C.2.}
All vertices of $V(G)\setminus V(P^{2}_{6})$ are adjacent to exactly 3 vertices of $P^{2}_{6}$.

If  $e(G[V(P^{2}_{6})])\le 11$, then $e(G)\leq11+3(n-6)+\ex(n-6,C_{3}^{\square} )\leq 3n-7+f (n-6)<f (n)$,
where the second inequality holds by induction.

If $e(G[V(P^{2}_{6})])=12$, then
by Lemma \ref{n=6}, we know that
$G[V(P_6^2)]=G_1$, and  so
$v_2v_5,v_2v_6$ and $v_3v_6$ are edges of $G$.
Note that $G[\{v_2,v_3,\ldots ,v_6\}]$ forms a copy of $K_5$;
see Figure \ref{H4}.
We next show that for every $y\in V(G) \setminus V(P_6^2)$,
\begin{equation} \label{eq-same}
 N_{P_6^2}(y)=\{v_1,v_2,v_3\}.
 \end{equation}
Choosing any vertex $x\in V(G)\setminus V(P_6^2)$.
If $N_{P_6^2}(x)\subseteq \{v_2,v_3,\ldots ,v_6\}$,
say $N_{P_6^2}(x)=\{v_3,v_4,v_5\}$ by symmetry,
then $G$ contains a copy of $C_{3}^{\square}$,
which consists of the triangles $xv_3v_4, v_2v_5v_6$
and the edges $xv_5,v_3v_2,v_4v_6$, a contradiction.
Hence, we get $N_{P_6^2}(x)\nsubseteq \{v_2,v_3,\ldots ,v_6\}$.
Without loss of generality, we may assume that
$x\in V(G)\setminus V(P_6^2)$ is a vertex with $N_{P_6^2}(x)=\{v_1,v_2,v_3\}$.
Moreover,
if $V(G)\setminus V(P_6^2)$ contains a vertex $y$ differing from $x$  such that  $N_{P_6^2}(y)\neq N_{P_6^2}(x)$, then
by symmetry, we  assume that either $N_{P_6^2}(y) = \{v_1,v_2,v_4\}$ or $N_{P_6^2}(y)=\{v_1,v_4,v_5\}$.
In the former case,
it is easy to see that the triangles $xv_1v_3, yv_2v_4$
and the edges $xv_2,v_1y, v_3v_4$ form
a copy of $C_{3}^{\square}$.
In fact, one can find another copy without using vertex $x$
by combining two triangles $yv_1v_2, v_3v_4v_5$ with edges $yv_4, v_1v_3, v_2v_5$.
In the latter case, we also see that
$G$ has a copy of $C_{3}^{\square}$ consisting of
the triangles $v_1v_2v_3, y v_4v_5$ and the edges
$v_1y, v_2v_4, v_3v_5$. This is a contradiction.
Consequently, we complete the proof of (\ref{eq-same})
and $H_4\subseteq G$; see Figure \ref{H4}.

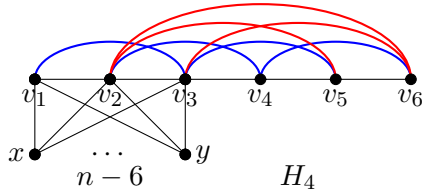
\begin{figure}[H]
\centering
\begin{tikzpicture}

\draw[blue,thick]
 (2,0) arc (0: 180: 1 and 0.5)
  (3,0) arc (0: 180: 1 and 0.5)
   (4,0) arc (0: 180: 1 and 0.5)
    (5,0) arc (0: 180: 1 and 0.5);

    \draw[red,thick]
   (4,0) arc (0: 180: 1.5 and 0.75)
    (5,0) arc (0: 180: 1.5 and 0.75);
        \draw[red,thick]
    (5,0) arc (0: 180: 2 and 1);

\filldraw (0,0) circle (2pt)
node[below]{$v_1$}  -- (1,0) circle (2pt)
node[below]{$v_2$} -- (2,0) circle (2pt)
node[below]{$v_3$} -- (3,0) circle (2pt)
node[below]{$v_4$} --(4,0) circle (2pt)
node[below]{$v_5$} -- (5,0) circle (2pt) node[below]{$v_6$} ;

\filldraw (0,0) circle (2pt) -- (2,-1) circle (2pt) -- (1,0) circle (2pt);

\filldraw (2,0) circle (2pt) -- (2,-1) circle (2pt);

\filldraw (0,0) circle (2pt) -- (0,-1) circle (2pt) -- (1,0) circle (2pt);

\filldraw (2,0) circle (2pt) -- (0,-1) circle (2pt);
\draw (1,-1) node{$\cdots$} (1,-1.3) node{$n-6$}
 (0,-1) node[left]{$x$}  (2,-1) node[right]{$y$}
(3.5,-1.3) node{$H_4$};
\end{tikzpicture}
\caption{The graph $H_4$.}
\label{H4}
\end{figure}

Since $v_1v_2v_3$ is a triangle, to avoid a copy of
$C_3^{ \square} $ in $G$,
we obtain that $G[V(G)\setminus V(P^{2}_{6})]$ is triangle-free. Hence, by Mantel's result (\ref{eq-man}),
we have $e(G)\leq12+3(n-6)+
\lfloor\frac{(n-6)^2}{4} \rfloor=3n-6+ \lfloor\frac{(n-6)^2}{4} \rfloor
<f (n)$, as required.
In a word, we complete the proof of (\ref{eq-strong}).
\end{proof}

In fact, our proof of Theorem \ref{C3}
implies the following stronger result.

\medskip

\begin{theorem} \label{thm-20}
Let $n\ge 9$ be an integer. If $G$ is a graph
with maximum number of edges over all
$C_{3}^{\square}$-free graphs on $n$ vertices, then
 $G$ is an extremal graph for $P_6^2$.
\end{theorem}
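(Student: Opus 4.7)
The plan is to extract the statement directly from what has already been established inside the proof of Theorem \ref{C3}. Recall that in that proof, whenever one assumes $P_6^2 \subseteq G$ for a $C_3\square P_2$-free graph $G$ on $n$ vertices, the stronger inequality (\ref{eq-strong}), namely $e(G) \le f_{C_3\square P_2}(n)-1$, was shown to hold for every $n$ except $n = 7$ and $n = 8$. Since Theorem \ref{thm-20} restricts to $n \ge 9$, this strict inequality is exactly what is needed.

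First, I would take $G$ to be an extremal $C_3\square P_2$-free graph on $n\ge 9$ vertices. By Theorem \ref{C3}, $e(G)=f_{C_3\square P_2}(n)$, and by Theorem \ref{P6} this value is equal to $\ex(n,P_6^2)$. Next, suppose for contradiction that $G$ contains a copy of $P_6^2$. Then invoking the stronger bound (\ref{eq-strong}) proved inside Theorem \ref{C3} (which, we should note, was established precisely under the hypothesis that $P_6^2\subseteq G$ and $n\neq 7,8$) gives
\[
e(G) \le f_{C_3\square P_2}(n)-1 < \ex(n,C_3\square P_2),
\]
contradicting the extremality of $G$. Therefore $G$ must be $P_6^2$-free.

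To conclude, since $G$ is $P_6^2$-free and $e(G)=\ex(n,P_6^2)$, by definition $G$ is an extremal graph for $P_6^2$, as claimed. There is no substantive obstacle here beyond correctly reading off the strict bound from the long case analysis of Theorem \ref{C3}; the only delicate point is verifying that the bookkeeping in that proof indeed excludes only $n=7,8$ (where the sporadic extremal graphs $G_2$ and $G_3$ from Figure \ref{678} appear and do contain $P_6^2$), so that for $n\ge 9$ the inequality is strict. This is precisely why Theorem \ref{EC3} and Theorem \ref{EP6} produce identical lists of extremal graphs once $n\ge 9$, which is exactly the content of Theorem \ref{thm-20}.
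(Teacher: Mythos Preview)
Your proposal is correct and follows essentially the same approach as the paper: the paper's own proof is the single line ``Recalling the proof of (\ref{eq-strong}), we are done,'' and you have simply spelled out in detail exactly what that recollection entails.
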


\begin{proof}
Recalling the proof of (\ref{eq-strong}), we are done.
\end{proof}

Combining with Theorem \ref{EP6}, we give the
extremal graphs for $C_3^{ \square}$.

\begin{proof}[{\bf Proof of Theorem \ref{EC3}}]
 Let $G$ be an $n$-vertex $C_{3}^{\square} $-free graph
 with maximum number of edges.
 In other words, $e(G)=\mathrm{ex}(n,C_{3}^{\square})=
 f (n)$.
For $n=6$, if $G$ contains
 $P_6^2$ as a subgraph, then  by Lemma \ref{n=6},
 we have $e(G) = 12$ and $G=G_1$;
 if $G$ is $P_6^2$-free, then
by Theorem \ref{EP6}, we get $G=H_6^3$.
For $n=7$, if $G$ contains $P_6^2$ as a subgraph, then
the proof of (\ref{eq-G2}) implies that $G=G_2$;
if $G$ is $P_6^2$-free, then applying Theorem \ref{EP6},
we obtain $G\in \{F_7^{4,1}, F_7^{4,4}, H_7^3\}$.
For $n=8$,
if $G$ has a copy of $P_6^2$, then
the proof of (\ref{eq-G3}) implies that $G=G_3$;
if $G$ is $P_6^2$-free, then Theorem \ref{EP6}
gives that $G\in \{F_8^{4,1},F_8^{4,4},F_8^{5,2}, F_8^{5,5}\}$.

 When $n\geq9$, Theorem \ref{thm-20}
 asserts that $G$ is an extremal graph for $P_6^2$.
 Indeed, if $P_6^2 \subseteq G$, then
 by (\ref{eq-strong}) we know that
 $e(G) \le f (n)-1$, a contradiction.
 Therefore,  using Theorem \ref{EP6}  again,
 we can obtaine all extremal graph for $C_{3}^{\square}$ immediately.
 \end{proof}

 \section{Concluding remarks}
 \label{sec6}

 In Theorem \ref{main},
 we determined the Tur\'{a}n number
 $\mathrm{ex}(n,C_{2k+1}^{ \square})$
 and characterized the corresponding extremal graphs
  for fixed $k\ge 1$ and sufficiently  large $n$.
 Note that $C_{2k}^{\square} $ is a bipartite graph,
 and it is a regular graph with degree $3$.
 For such a spare bipartite graph,
we can obtain that
for every $k\ge 2$,
\begin{equation} \label{eq-C2k}
 \mathrm{ex}(n,C_{2k}^{\square}) =O (n^{5/3}).
 \end{equation}
Indeed, it can be derived
from a result of F\"{u}redi \cite{Furedi91} that
if $H$ is a bipartite graph with maximum degree at most $d$
on one side of the bipartition, then $\mathrm{ex}(n,H) =O(n^{2-1/d})$.
In 2003, Alon, Krivelevich and Sudakov \cite{Alon03}
provided an alternative proof of such a result
as one of the applications of the celebrated dependent random choice method; see, e.g., \cite{FS2011}
for a description of this powerful probabilistic technique.
It seems extremely difficult to determine the  magnitude of
the Tur\'{a}n number of $C_{2k}^{\square}$.
For example, in the case $k=2$,
it reduces to the cube $Q_8$,
Simonovits \cite{ES1970} proved in 1970
that $\mathrm{ex}(n,Q_8)=O (n^{8/5})$. Until now,
it remains unknown whether
$\mathrm{ex}(n,Q_8)= \Theta (n^{8/5})$.
Recently, Conlon and Lee \cite{CL19} conjectured that
if $H$ is a bipartite graph with maximum degree at most $d$
on one side, and $H$ is $K_{d,d}$-free, then
$\mathrm{ex}(n,H) = O(n^{2-1/d -\varepsilon})$
holds for some $\varepsilon = \varepsilon (H)>0$;
see \cite{CL19,Jan2019} for proofs of the case $d=2$,
and \cite{CJL20,ST2020} for more related results.
Observe that $C_{2k}^{\square}$ is $K_{3,3}$-free.
Motivated by the above works, we would like to propose the following conjecture,
which generalized the bound in (\ref{eq-C2k}) exponentially.

 \medskip
  \begin{conjecture}
For every $k\ge 2$, there exists $\varepsilon =\varepsilon (k)>0$
such that
\[ \mathrm{ex}(n,C_{2k}^{\square} ) =
O(n^{\frac{5}{3}- \varepsilon}).\]
 \end{conjecture}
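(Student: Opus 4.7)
The plan is to adapt the weighted dependent random choice framework used in the proofs of the Conlon--Lee conjecture for $d=2$ (by Conlon and Lee \cite{CL19} and independently by Janzer \cite{Jan2019}) to the case $d=3$. Let $G$ be an $n$-vertex $C_{2k}\square P_2$-free graph with $e(G) \ge C n^{5/3-\varepsilon}$ for some large $C=C(k)$. First, I would sample a small random set $T\subseteq V(G)$ of size of order $\log n$ with repetition and let $A$ be the common neighborhood of $T$; then by a standard calculation the expected size of $A$ is $\Omega(n^{\eta})$ for some $\eta=\eta(\varepsilon)>0$, while the expected number of triples in $A$ having codegree above a suitable polylogarithmic threshold is much smaller than $|A|^{3}$. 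After cleaning, I would obtain a large set $A'\subseteq V(G)$ in which every triple of vertices has only polylog common neighbors, using critically that $C_{2k}\square P_2$ is $K_{3,3}$-free, so that large triple codegrees cannot persist in a dense host without forcing a copy of $H$.

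Next, I would attempt a random sequential embedding of $H=C_{2k}\square P_2$ into the bipartite subgraph between $A'$ and $V(G)$. A natural ordering is to embed the ``bottom'' cycle of $H$ vertex by vertex, using pairwise codegree control to guarantee each cycle edge can be realized, and then to attach the ``top'' cycle together with the perfect matching joining the two copies of $C_{2k}$. The expected number of successful embeddings is bounded below by a product of codegree ratios, and one would aim to show this exceeds one, contradicting the $H$-freeness of $G$.

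The principal obstacle is the second stage of the embedding. Whereas for $d=2$ one only has to handle pairwise codegrees along a path, here the rigid matching between the two cycles forces simultaneous control of triples of neighborhoods: for each bottom-cycle vertex $v$, its matched partner in the top cycle must lie in the common neighborhood of $v$ and the two top-cycle neighbors already placed. Transferring the Conlon--Lee argument therefore demands a robust triple-codegree analogue of the structure theorems in \cite{Jan2019,CJL20}, which is precisely what the open case $d=3$ of the Conlon--Lee conjecture requires. A more promising route tailored to the prism structure would exploit the abundance of $4$-cycles in $C_{2k}\square P_2$ (one per edge of $C_{2k}$) via a Bondy--Simonovits type supersaturation argument: in an $n$-vertex graph with $\Omega(n^{5/3})$ edges one typically has $\Omega(n^{7/3})$ copies of $C_4$, and one would hope that a positive fraction of them can be extended to a copy of $C_{2k}\square P_2$ unless $e(G)$ has already been pushed below $n^{5/3-\varepsilon}$. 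Making this quantitative, and clarifying whether $\varepsilon$ can be chosen uniformly in $k$ or must genuinely degrade with $k$, is the central technical difficulty.
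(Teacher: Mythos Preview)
The statement you are addressing is explicitly presented in the paper as a \emph{conjecture}, not a theorem; the authors give no proof, only motivation via F\"{u}redi's bound $\mathrm{ex}(n,H)=O(n^{2-1/d})$ and the Conlon--Lee conjecture for $K_{d,d}$-free bipartite $H$. There is therefore no argument in the paper to compare your proposal against.

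As for the proposal itself, it is a reasonable survey of two natural attack routes, but it is not a proof and you essentially concede this yourself. The dependent random choice approach you sketch hinges on a triple-codegree structure theorem that, as you note, is precisely the unresolved $d=3$ case of the Conlon--Lee conjecture; invoking it here would be circular, since the present conjecture is a special instance of that open problem. Your alternative $C_4$-supersaturation idea is suggestive but not developed: you would need to explain why having $\Omega(n^{7/3})$ copies of $C_4$ forces an extension to $C_{2k}\square P_2$, and nothing in the proposal indicates a mechanism for this step or a bound on how many $C_4$'s can fail to extend. In short, both routes identify the right obstacles but neither supplies the missing idea, so the proposal should be read as a research plan rather than a proof.
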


After our paper announced on arXiv, Gao, Janzer, Liu and Xu \cite{GJLX2023}
confirmed the conjecture in a stronger form $\mathrm{ex}(n,C_{2k}^{\square} ) =  \Theta(n^{3/2})$ for every $k\ge 4$.
Extremal results for `twisted' prisms can be found in
 Brada\v{c},  Methuku and Sudakov \cite{BMS2023}.
 Finally, we conclude this paper with an interesting conjecture
proposed by  Brada\v{c}, Janzer, Sudakov and Tomon \cite{BJST2022}.

 \medskip
 \begin{conjecture}[See \cite{BJST2022}]
Let $T, S$ be any two trees with at least one edge.
There exist positive real numbers $c$ and $C$ such that
$  cn^{3/2} \le \mathrm{ex}(n,T \square S) \le Cn^{3/2}$.
 \end{conjecture}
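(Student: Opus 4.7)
My starting observation is that $T\square S$ contains a $C_4$ whenever both trees have at least one edge: fixing edges $t_1t_2\in E(T)$ and $s_1s_2\in E(S)$, the four vertices $(t_1,s_1),(t_1,s_2),(t_2,s_2),(t_2,s_1)$ form a $4$-cycle. Hence every $C_4$-free graph is automatically $T\square S$-free, so by the classical construction of Erd\H{o}s--R\'enyi and W.\,G.\,Brown (the incidence graph of a projective plane, or the polarity graph),
\[
\mathrm{ex}(n,T\square S)\;\ge\;\mathrm{ex}(n,C_4)\;=\;\bigl(\tfrac12+o(1)\bigr)n^{3/2},
\]
which settles the lower half of the conjecture with any constant $c<1/2$.

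\textbf{Upper bound.} For the much harder upper bound I would try to emulate the framework of Brada\v{c}--Janzer--Sudakov--Tomon for the grid $P_t\square P_t$, combined with a double induction on $|V(T)|+|V(S)|$. The plan is: given an $n$-vertex graph $G$ with $e(G)\ge Cn^{3/2}$, first apply dependent random choice to extract a large subset $U\subseteq V(G)$ in which every constant-size subset has $\Omega(\sqrt n)$ common neighbours. Since trees are $1$-degenerate, I would then peel a leaf $v$ off $T$, invoke the inductive hypothesis to embed $T'\square S$ (with $T'=T-v$) into an appropriate subgraph of $G$, and extend this partial embedding by adjoining one more ``column'' corresponding to $v$; this last step reduces to locating a copy of $S$ inside the common neighbourhood of the image of the column belonging to $v$'s unique neighbour in $T$, and this is exactly the task that dependent random choice is designed to handle.

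\textbf{Main obstacle.} The hard part is that $T\square S$ is bipartite but not sparse in the sense required to obtain an $O(n^{3/2})$ Tur\'an bound for free: its one-sided maximum degree is $\Delta(T)+\Delta(S)$, so the K\H{o}v\'ari--S\'os--Tur\'an theorem and the F\"uredi/Alon--Krivelevich--Sudakov estimates for bipartite $H$ of bounded one-sided degree only deliver $\mathrm{ex}(n,T\square S)=O(n^{2-1/d})$ with $d=\Delta(T)+\Delta(S)$, which is weaker than $n^{3/2}$ as soon as $\Delta(T)+\Delta(S)\ge 3$. Degree-based arguments alone therefore cannot suffice; the proof must genuinely exploit the product structure and the $1$-degeneracy of $T$ and $S$. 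The delicate point of the induction is to control simultaneously the edge count of the residual host graph and the size of the rich set produced by dependent random choice, so that the inductive hypothesis is still applicable at every recursion level. A potentially cleaner route would be to exhibit $T\square S$ as a subgraph of the $2$-subdivision of some auxiliary graph and invoke the Faudree--Simonovits $O(n^{3/2})$ bound for subdivisions, but verifying such a containment for general (non-path) trees is itself the real crux, and I would expect this to be where a fundamentally new idea is needed.
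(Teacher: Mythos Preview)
The statement you are attempting is a \emph{conjecture} in the paper, not a theorem: the authors record it in the concluding remarks as an open problem due to Brada\v{c}, Janzer, Sudakov and Tomon, and give no proof. So there is no ``paper's own proof'' to compare against.

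Your lower bound is fine and is the standard observation: any product $T\square S$ with both factors containing an edge contains $C_4$, hence $\mathrm{ex}(n,T\square S)\ge \mathrm{ex}(n,C_4)=\Omega(n^{3/2})$. This part is complete.

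Your upper bound, however, is not a proof but a plan, and you yourself flag the genuine gap. The inductive scheme you outline---peel a leaf $v$ from $T$, embed $T'\square S$ by induction, then attach the missing column by finding a copy of $S$ inside the common neighbourhood of the column of $v$'s neighbour---breaks down at the very last step. That column has $|V(S)|$ vertices, and to guarantee a copy of $S$ in their common neighbourhood you would need that common neighbourhood to have size $\Omega(n^{1/2})$ (or at least to be ``rich'' enough to contain $S$). Dependent random choice, as you invoke it, only guarantees large common neighbourhoods for subsets of a \emph{fixed constant} size, whereas here the relevant subset size is $|V(S)|$; once $|V(S)|\ge 3$ the parameters no longer close, which is exactly the phenomenon you identify when you note that the one-sided degree of $T\square S$ is $\Delta(T)+\Delta(S)$ and the F\"uredi/Alon--Krivelevich--Sudakov bound only yields $O(n^{2-1/(\Delta(T)+\Delta(S))})$. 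Your alternative route via $2$-subdivisions also fails in general: $T\square S$ contains $C_4$, which has no vertex of degree $2$ in the right position, so it is not a $2$-subdivision of anything, nor is it contained in the $2$-subdivision of a graph in any way that would let the Faudree--Simonovits bound apply. In short, the upper half of the conjecture remains genuinely open, and your proposal does not supply the missing idea.
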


\section*{Declaration of competing interest}
The authors declare that they have no conflicts of interest to this work.

\section*{Data availability}
No data was used for the research described in the article.

\section*{Acknowledgments}
Thanks  go to Dr. Zixiang Xu  for pointing out the reference \cite{JMY2021}.
This work was supported by NSFC (Nos. 12271527, 12071484 and 11931002), Natural Science Foundation of Hunan Province (Nos. 2020JJ4675 and 2021JJ40707), the Fundamental Research Funds for the Central Universities of Central South University (Grant No. 2021zzts0034) and 
 the Postdoctoral Fellowship Program of CPSF (No. GZC20233196).

\end{document}